\newtheorem{mtheorem}{Main Theorem}[section]
\newtheorem{theorem}{Theorem}[section]
\newtheorem{claim}[theorem]{Claim}
\newtheorem{lemma}[theorem]{Lemma}
\newtheorem{proposition}[theorem]{Proposition}
\newtheorem{corollary}[theorem]{Corollary}
\theoremstyle{definition}
\newtheorem{definition}[theorem]{Definition}
\newtheorem{example}[theorem]{Example}
\newtheorem{question}[theorem]{Question}
\theoremstyle{remark}
\newtheorem{remark}[theorem]{Remark}
\newtheorem{fact}[theorem]{Fact}
\def\l{{\langle}}
\def\r{{\rangle}}
\def\mathunderaccent#1#2 {\let\theaccent#1\skewfactor#2
\mathpalette\putaccentunder}
\def\putaccentunder#1#2{\oalign{$#1#2$\crcr\hidewidth
\vbox to.2ex{\hbox{$#1\skew\skewfactor\theaccent{}$}\vss}\hidewidth}}
\newcommand{\Gal}[1]{\mathrm{Gal}({#1})}
\def\smallbox#1{\leavevmode\thinspace\hbox{\vrule\vtop{\vbox
   {\hrule\kern1pt\hbox{\vphantom{\tt/}\thinspace{\tt#1}\thinspace}}
   \kern1pt\hrule}\vrule}\thinspace}
\DeclareMathOperator{\crit}{crit}
\newcommand{\cf}{{\rm cf}}
\title{The Galvin Property under the Ultrapower Axiom}
\author[Benhamou]{Tom Benhamou}
\address[Benhamou]{Department of Mathematics, Rutgers University, Piscataway (NJ) 08854-
8019, USA.}
\email{tom.benhamou@rutgers.edu}
\thanks{The research of the first author was supported by the National Science Foundation under Grant
No. DMS-2346680}
\author{Gabriel Goldberg}
\address[Goldberg]{Department of Mathematics, University of California, Berkeley, Berkeley, CA 94720-3840 USA }
\email{ggoldberg@berkeley.edu}
\subjclass[2010]{03E45, 03E65, 03E55, 06A07}
\keywords{Galvin's property, the Ultrapower Axiom, Inner models, the Tukey order, p-point ultrafilters}
\begin{document}

\begin{abstract}
We continue the study of the Galvin property from \cite{bgs} and \cite{SatInCan}. In particular, we deepen the connection between certain diamond-like principles and non-Galvin ultrafilters. We also show that any Dodd sound non p-point ultrafilter is non-Galvin. 
We use these ideas to formulate
what appears to be the optimal large cardinal hypothesis implying the existence of a non-Galvin ultrafilter, improving on a result from \cite{TomNatasha}. Finally, we use a strengthening of the Ultrapower Axiom to prove that in all the known canonical inner models, a $\kappa$-complete ultrafilter has the Galvin property if and only if it is an iterated sum of $p$-points.  
\end{abstract}
\maketitle

\section{Introduction}
In this paper, we study certain aspects of the \textit{Galvin property} of ultrafilters:
\begin{definition}
    Let $U$ be a uniform ultrafilter over $\kappa$. We say that $U$ has the Galvin property if for any sequence $\l A_i\r_{ i<2^\kappa}$, there is $I\in[ 2^\kappa]^{\kappa}$ such that $\bigcap_{i\in I}A_i\in U$. 
\end{definition}
More generally, if $\lambda\leq \kappa$ and $U$ is a uniform ultrafilter over $\kappa$, we denote by $\text{Gal}(U,\lambda,2^\kappa)$ the statement that for any $\l A_i\r_{i<2^\kappa}$ there is $I\in[2^\kappa]^\lambda$ such that $\bigcap_{i\in I}A_i\in U$. 
Galvin proved in 1973 every normal ultrafilter has the Galvin property. Gitik and Benhamou \cite{TomMoti} recently improved this result to show that any product of $\kappa$-complete $p$-points over $\kappa$ has the Galvin property.\footnote{A $\kappa$-complete ultrafilter $U$ over $\kappa$ is called $p$-point if every sequence $\l A_i\mid i<\kappa\r\subseteq U$ has a measure-one pseudo-intersection; that is, there is $A\in U$ such that for every $i<\kappa$, $|A\setminus A_i| < \kappa$.}
Benhamou \cite{SatInCan} then proved what appears to be a slight improvement of this result:
\begin{theorem}\label{Theorem: Galvin Improvment}
    Suppose that $U$ is Rudin-Keisler equivalent to an $n$-fold sum of $\kappa$-complete $p$-points (See Definition \ref{Definition: sums and limits}). Then $U$ has the Galvin property. 
\end{theorem}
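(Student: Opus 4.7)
The plan is induction on $n$. The Galvin property is Rudin--Keisler invariant for $\kappa$-complete ultrafilters, so we may assume $U=\sum_W U_\alpha$ with $W$ itself an $(n-1)$-fold sum of $\kappa$-complete $p$-points and each $U_\alpha$ a $\kappa$-complete $p$-point on $\kappa$. The base case $n=1$ is the Gitik--Benhamou theorem for a single $p$-point.

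For the inductive step, fix $\langle A_i:i<2^\kappa\rangle\subseteq U$ and let $B_i:=\{\alpha:(A_i)_\alpha\in U_\alpha\}\in W$, where $(A_i)_\alpha=\{\beta:(\alpha,\beta)\in A_i\}$. By induction $W$ has the Galvin property, producing $I_0\in[2^\kappa]^\kappa$ with $B:=\bigcap_{i\in I_0}B_i\in W$, so that $(A_i)_\alpha\in U_\alpha$ for every $\alpha\in B$ and $i\in I_0$. For each such $\alpha$, the $p$-point property of $U_\alpha$ applied to $\langle(A_i)_\alpha:i\in I_0\rangle$ yields a pseudo-intersection $C_\alpha\in U_\alpha$ together with an error function $g_\alpha:I_0\to\kappa$ defined by $g_\alpha(i):=\sup(C_\alpha\setminus(A_i)_\alpha)$.

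To conclude I need $I\in[I_0]^\kappa$ and $B'\in W$, $B'\subseteq B$, such that $g_\alpha\upharpoonright I$ is bounded below $\kappa$ for every $\alpha\in B'$; then $\bigcap_{i\in I}(A_i)_\alpha\supseteq C_\alpha\setminus\sup_{i\in I}g_\alpha(i)\in U_\alpha$, yielding $\bigcap_{i\in I}A_i\in U$. A natural attempt is to pass to $j_W:V\to M_W$: for each $i\in I_0$, $\alpha\mapsto g_\alpha(i)$ represents some $h(i)<j_W(\kappa)$, and if one could bound $h:I_0\to j_W(\kappa)$ by a single $[f]_W$ with $f:\kappa\to\kappa$, then $E_i:=\{\alpha:g_\alpha(i)\leq f(\alpha)\}\in W$ for every $i\in I_0$, and any $B'\subseteq\bigcap_{i\in I}E_i$ in $W$ would give the pointwise bound $f(\alpha)+1<\kappa$ uniformly on $I$.

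The main obstacle is two-fold. First, bounding $h$ in $j_W(\kappa)$ is not automatic, since $\cf^V(j_W(\kappa))$ can be as small as $\kappa$; and second, even granting such $f$, extracting $I\in[I_0]^\kappa$ with $\bigcap_{i\in I}E_i\in W$ is a Galvin-type demand on a $\kappa$-length sequence in $W$, which is not implied by the $\text{Gal}(W,\kappa,2^\kappa)$ supplied by the induction. I therefore expect that the actual argument proceeds under a \textit{strengthened} inductive hypothesis -- that every $n$-fold sum of $\kappa$-complete $p$-points satisfies $\text{Gal}(-,\kappa,\lambda)$ for every $\lambda\in[\kappa,2^\kappa]$ -- proved by simultaneous induction on $n$, the base case being a direct combinatorial argument exploiting pseudo-intersections for a single $p$-point. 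Closing this loop by cleanly threading the strengthened hypothesis through both the outer application (to shrink $\langle B_i\rangle$) and the inner application (to shrink $\langle E_i\rangle$) is the delicate core of the proof.
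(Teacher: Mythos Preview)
Your proposal has a genuine gap, which you yourself diagnose correctly: after applying the inductive hypothesis to the sequence $\langle B_i\rangle$ to obtain $I_0$, you are left with a \emph{$\kappa$-indexed} family $\langle E_i\rangle_{i\in I_0}$ in $W$, and extracting $I\in[I_0]^\kappa$ with $\bigcap_{i\in I}E_i\in W$ is \emph{not} a consequence of $\text{Gal}(W,\kappa,2^\kappa)$. Your proposed fix of strengthening the induction hypothesis to $\text{Gal}(-,\kappa,\lambda)$ for all $\lambda\in[\kappa,2^\kappa]$ does not close this: $\text{Gal}(W,\kappa,\kappa)$ asks for $\kappa$-many out of $\kappa$-many sets whose intersection lies in $W$, and this fails already for a single $\kappa$-complete $p$-point (a pseudo-intersection is not an intersection). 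So the induction as you set it up does not go through, and the strengthened hypothesis you propose is simply false in the base case.

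The paper does \emph{not} argue by induction on $n$. Instead it gives a direct construction that handles all levels of the sum simultaneously. The two key ingredients are: (i) the \emph{modified diagonal intersection} $\Delta^{W}_{i<\kappa}A_i$, which is in $W$ whenever $W$ is $\kappa$-complete and the $A_i$ are in $W$, and which satisfies $\Delta^{W}_{i<\kappa}A_i\setminus\pi_W^{-1}[i_0+1]\subseteq A_{i_0}$ for each $i_0$; and (ii) an \emph{elementary substructure} $\mathcal N\prec H(\theta)$ of size $\kappa$, closed under ${<}\kappa$-sequences, containing the sequence $\langle A_i\rangle$. One sets $\alpha^*=\kappa^+\cap\mathcal N$ and uses elementarity to find, for each bound $\delta<\alpha^*$ and each pair $\alpha_1,\alpha_2<\kappa$, an index $\beta\in(\delta,\alpha^*)$ such that $A_\beta$ agrees with $A_{\alpha^*}$ at every fibre up to heights $\alpha_1,\alpha_2$. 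Iterating this with the heights $\alpha_1=\rho^{(1)}(i)$, $\alpha_2=\rho^{(2)}(i)$ (built from the $p$-point bounding functions $\rho_{U_\xi},\rho_{U_{\xi,\eta}}$) produces the desired $\kappa$-sequence $\langle\mu_i\rangle$; the intersection is witnessed by combining the modified diagonal intersections of the fibre sets with the fibres of $A_{\alpha^*}$. The $p$-point hypothesis enters only through the fact that the $\rho$-functions map $\kappa$ to $\kappa$. This argument avoids entirely the second-application-of-Galvin obstacle you ran into.
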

The main theorem of this paper shows
that under natural combinatorial hypotheses which hold in all known canonical inner models, the converse of the above theorem is true.
\begin{mtheorem}\label{mthorem: kappa complete galvin characterization}
    Assume the Ultrapower Axiom
    and that every irreducible ultrafilter is Dodd sound. If
    \(U\) is a \(\kappa\)-complete ultrafilter on \(\kappa\)
    with the Galvin property, then
    \(U\) is Rudin-Keisler equivalent to an iterated sum of 
    \(\kappa\)-complete p-points on \(\kappa\).
\end{mtheorem}
The hypotheses of this theorem will be discussed and explained further later in the introduction.

The study of the Galvin property is motivated by its presence in
various areas of set theory and infinite combinatorics \cite{TomMoti,partOne,bgs,Non-GalvinFil,GalDet,TomNatasha,ghhm,bgp}. One particularly noteworthy incarnation of the Galvin property is the maximal class in the Tukey order, which we shall now explain in more detail.
\begin{definition}
    For two posets $(P,\leq_P),(Q,\leq_Q)$\footnote{We shall abuse notation by suppressing the order in a poset.}, we say that $P\leq_T Q$ if there is a cofinal map $f:Q\rightarrow P$.\footnote{A map $f:Q\rightarrow P$ is called cofinal if for every cofinal set $B\subseteq Q$, $f''B$ is cofinal in $P$.} We say that $P,Q$ are Tukey equivalent and denote $P\equiv_TQ$, if $P\leq_T Q$ and $Q\leq_TP$.
\end{definition}
The Tukey order finds its origins in the Moore-Smith convergence notions of nets and is of particular interest when considering the poset $(U,\supseteq)$ where $U$ is an ultrafilter. The Tukey order restricted to ultrafilters over $\omega$ has been extensively studied by Isbell \cite{Isbell65}, Milovich \cite{Milovich08,Milovich12},
Dobrinen and Todorcevic \cite{Dobrinen/Todorcevic11,Dobrinen/Todorcevic14,DobrinenJSL15}, Raghavan, Dobrinen, and Blass \cite{Raghavan/Todorcevic12,Blass/Dobrinen/Raghavan15}, and many others. 
Lately, this investigation has 
been stretched to ultrafilters over uncountable cardinals and in particular to measurable cardinals by Benhamou and Dobrinen \cite{TomNatasha}. It turns out that the Tukey order on $\sigma$-complete ultrafilters over measurable cardinal behaves differently from the one on $\omega$ and requires a new theory to be developed. One of these differences revolves around the maximal class. For a given $\lambda$, a uniform ultrafilter $U$ on $\kappa$ is called \textit{Tukey-top with respect to $\lambda$} if its Tukey class is above every $\lambda$-directed poset of size $2^\kappa$. It turns out that an ultrafilter $U$ is Tukey-top with respect to  $\lambda$ if and only if $\neg \text{Gal}(U,\lambda,2^\kappa)$. In particular, a uniform ultrafilter over \(\kappa\) 
is Tukey-top with respect to \(\kappa\)
if and only if it is non-Galvin.

Working in ZFC (with no additional set theoretic hypotheses), Isbell \cite{Isbell65} constructed  ultrafilters on $\omega$ which are non-Galvin, this construction was accomplished independently by Juh\'{a}sz \cite{Juhasz67}.  The first construction of non-Galvin ultrafilters over measurable cardinals is due to Garti, Shelah, and Benhamou \cite{bgs}, using the existence of Kurepa trees to prevent a certain ultrafilter from having the Galvin property. This connection between Kurepa trees and the Galvin property is further explored in this paper, where we define (Definition \ref{Definition: Diamond-Thin}) a diamond-like principle $\Diamond^*_{\text{thin}}(W)$, and a slight weakening (Definition \ref{Definition: W-Kurepa}) of it that ensures that an ultrafilter is non-Galvin (Lemma \ref{Lemma: Thin implies non-Galvin}). 

In \cite{TomNatasha}, Isbell's construction together with other features from \cite{SatInCan} enabled the construction of a non-Galvin ultrafilter over a $\kappa$-compact cardinal. Here we improve this initial large cardinal, isolate the notion of a \textit{non-Galvin cardinal} (Definition \ref{Definition: non-Galvin cardinal}), and prove the following:
\begin{mtheorem}\label{mthorem: non-Galvin implies existence of ultrafilter}
    Suppose that $\kappa$ is a non-Galvin cardinal then $\kappa$ carries a $\kappa$-complete ultrafilter $U$ such that $\neg\text{Gal}(U,\kappa,\kappa^+)$. In particular if in addition $2^\kappa=\kappa^+$ then $U$ is a non-Galvin ultrafilter.
\end{mtheorem}
We also prove that $\kappa$-compactness implies non-Galvinness (Theorem \ref{Theorem: Compact implies non-Galvin}), that some degree of Dodd soundness implies it (Corollary \ref{Corollary: nonppoint+DoddSound implies non-Galvin}), and that in the known canonical inner models, a $\kappa$-compact cardinal is a limit of non-Galvin cardinals (Proposition \ref{Proposition: UA implies compact above non-Galvin}).

In \cite{Parttwo}, Gitik and Benhamou noted that although the existence of a non-Galvin ultrafilter is equiconsistent with a measurable cardinal, the latter assumption (measurability) does not outright imply that there is a non-Galvin ultrafilter. More precisely, in Kunen's model $L[U]$, since every $\sigma$-complete ultrafilter is Rudin-Keisler equivalent to a power of the normal ultrafilter $U$, Theorem \ref{Theorem: Galvin Improvment} can be invoked to deduce the Galvin property for every  $\sigma$-complete ultrafilter in $L[U]$. Being the simplest example of a canonical inner model which can accommodate a measurable cardinal, the result in $L[U]$ suggests that the Galvin property, like many other combinatorial properties of ultrafilters, has a rigid form in the canonical inner models. Indeed, the result from $L[U]$ was later generalized \cite{SatInCan} to the Mitchell-Steel models $L[E]$ up to a measurable limit of superstrong cardinal\footnote{A cardinal $\kappa$ is \textit{superstrong} if there is an elementary embedding $j\colon V\rightarrow M$ with $\crit(j)=\kappa$ and $V_{j(\kappa)}\subseteq M$.} (See Theorem \ref{Theorem: Galvin Improvment}). These results in the inner models suggest the following question \cite[Question 5.1]{SatInCan}:
\begin{question}
    Is there an inner model with a non-Galvin ultrafilter? 
\end{question}

In this paper we take a more ambitious approach and work under the \textit{Ultrapower Axiom} (UA)\footnote{In this paper, we will  use the structural consequences of UA rather than UA itself, so we choose not to provide the precise statement of the axiom, which can be found in \cite{GoldbergUA}.} which is a combinatorial principle discovered by Goldberg \cite{GoldbergUA}. The advantage of UA is that with one simple axiom, which holds in all known canonical inner models, many of the usual principles are captured;
for example, the linearity of the Mitchell order and instances of GCH. More relevant for our purposes, the presence of UA imposes rigidity on the structure of ultrafilters:
\begin{theorem}[UA]\label{Theorem: Factorization into irreducibles}
    Let $W$ be a $\sigma$-complete ultrafilter. Then $W$ can be written as the $n$-fold sum of irreducible ultrafilters.\footnote{Recall the irreducible ultrafilters are those ultrafilters which are minimal in the Rudin-Frol\'ik order. Equivalently, $W$ is irreducible  if there is no ultrapower embedding $j\colon V\rightarrow M$ and an ultrafilter $U\in M$ such that $j_W=(j_U)^M\circ j$.}
\end{theorem}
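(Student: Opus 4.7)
The plan is to proceed by well-founded induction on the Ketonen order on $\sigma$-complete ultrafilters, which under UA is a linear well-order; denote the Ketonen rank of an ultrafilter $U$ by $\mathrm{rk}(U)$. Note that well-foundedness of the Ketonen order is provable in ZFC, so only the rank-decrease argument needs to be checked carefully.

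If $W$ is irreducible then $W$ itself is a $1$-fold sum of an irreducible ultrafilter and we are done. Otherwise, by the definition of irreducibility there exists a proper Rudin-Frol\'ik predecessor of $W$; among all such predecessors choose $U$ of minimal Ketonen rank. Then $U$ is automatically irreducible, since any proper RF-predecessor of $U$ would also be a proper RF-predecessor of $W$ of strictly smaller Ketonen rank, contradicting minimality. Fix a corresponding factorization in which $W$ appears as the $U$-indexed sum of a family $\langle W_a\rangle$ of $\sigma$-complete ultrafilters living in $\Ult(V,U)$, so that $j_W$ factors through $j_U$ via the internal ultrapowers by the $W_a$.

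The crucial step is to show that for $U$-almost every $a$ the Ketonen rank of $W_a$ computed inside $\Ult(V,U)$ is strictly smaller than $j_U(\mathrm{rk}(W))$. Granted this, since UA is inherited by internal ultrapowers of the form $\Ult(V,U)$, the inductive hypothesis applies inside $\Ult(V,U)$: for $U$-almost every $a$, the ultrafilter $W_a$ is an $n_a$-fold sum of irreducibles in $\Ult(V,U)$ for some $n_a<\omega$. Because $U$ is countably complete, the map $a\mapsto n_a$ must be constant on a set in $U$, producing a single $n<\omega$ that works uniformly. Splicing this uniform internal decomposition with the outer $U$-sum presents $W$ as an $(n+1)$-fold sum of irreducibles in $V$.

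The main obstacle is establishing the rank decrease $\mathrm{rk}^{\Ult(V,U)}(W_a)<j_U(\mathrm{rk}(W))$, which asserts that any nontrivial Rudin-Frol\'ik decomposition strictly lowers the Ketonen complexity of the summands after translation through $j_U$. This compatibility between the Ketonen order and the Rudin-Frol\'ik order is the central structural fact in Goldberg's UA framework, and verifying it rigorously is where the real work of the proof lies; it is essentially a translation-function argument comparing the two embeddings $j_W$ and $j_{W_a}\circ j_U$. Once this rank decrease is in hand, the finite-$n$ conclusion is just a short pigeonhole argument using the $\sigma$-completeness of $U$.
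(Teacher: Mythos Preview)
The paper does not give its own proof of this theorem: it is stated in the introduction as a known consequence of the Ultrapower Axiom, with an implicit reference to Goldberg's monograph \cite{GoldbergUA}. So there is no in-paper argument to compare your proposal against.

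As for the proposal itself, the overall architecture is sound and close in spirit to how the result is obtained in \cite{GoldbergUA}: pick an irreducible $U\leq_{RF}W$, factor $W$ as a $U$-sum, and recurse inside $M_U$ using that UA passes to internal ultrapowers; then use $\sigma$-completeness of $U$ to get a uniform $n$. Your observation that a Ketonen-minimal proper RF-predecessor is automatically irreducible is correct (proper RF-below implies strictly Ketonen-below).

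The genuine gap is precisely where you say it is. You reduce the induction to the claim that for $U$-almost all $a$ one has $\mathrm{rk}^{M_U}(W_a)<j_U(\mathrm{rk}(W))$, equivalently that $W^*=[a\mapsto W_a]_U$ satisfies $W^*<_{\Bbbk}^{M_U} j_U(W)$, and then you explicitly decline to prove it, calling it ``where the real work of the proof lies.'' That makes the proposal a plan rather than a proof. This inequality is indeed the substantive content; it is essentially the statement that the limit/translation operations between $V$-ultrafilters and $M_U$-ultrafilters are compatible with the Ketonen order, and establishing it does require real work (comparison arguments under UA, of the sort developed in Chapter~5 of \cite{GoldbergUA}). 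Without it, nothing in your outline forces the recursion to terminate after finitely many steps.

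One remark on strategy: the route taken in \cite{GoldbergUA} is not literally an induction on Ketonen rank of the tail, but rather goes through the structural fact that under UA the RF-predecessors of any countably complete $W$ are linearly ordered by $\leq_{RF}$ and there are only finitely many of them up to isomorphism. That finiteness immediately gives the desired factorization. Your rank-decrease formulation is equivalent in strength, but if you want to complete the argument you will either need to prove the rank inequality directly or to establish the finiteness of RF-predecessors and cite that instead.
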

In \cite{SatInCan}, this kind of characterization, together with further fine structural properties of the Mitchell-Steel extender models $L[E]$ was already used to prove the following:
\begin{theorem}
    If $L[E]$ is an iterable Mitchell-Steel model containing no superstrong cardinals, then every $\kappa$-complete ultrafilter in
    $L[E]$ has the Galvin property.
\end{theorem}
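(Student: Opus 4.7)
My plan is to reduce the statement to the combination of two facts already available: the factorization theorem for $\sigma$-complete ultrafilters under UA (Theorem \ref{Theorem: Factorization into irreducibles}), and the fact that an $n$-fold sum of $\kappa$-complete $p$-points has the Galvin property (Theorem \ref{Theorem: Galvin Improvment}). The missing piece is then an assertion purely about irreducible ultrafilters: in an iterable Mitchell--Steel model with no superstrong cardinals, every $\kappa$-complete irreducible ultrafilter on $\kappa$ is a $p$-point. Granting this, the theorem is immediate: given a $\kappa$-complete $U \in L[E]$, we factor $U$ as a finite iterated sum of irreducibles inside $L[E]$, note that each summand is $\kappa$-complete and a $p$-point, and then invoke Theorem \ref{Theorem: Galvin Improvment}.

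The first step is to verify that the hypotheses of Theorem \ref{Theorem: Factorization into irreducibles} are available inside $L[E]$. Here I would appeal to Goldberg's theorem that iterable Mitchell--Steel models satisfy UA, so the factorization of $\sigma$-complete ultrafilters into a finite iterated sum of irreducibles takes place inside $L[E]$. In particular, the irreducible summands are themselves $\kappa$-complete ultrafilters on ordinals $\leq \kappa$ that belong to $L[E]$.

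The second step, and the main work, is the claim that every $\kappa$-complete irreducible ultrafilter in $L[E]$ is a $p$-point, assuming $L[E]$ has no superstrong cardinal. The idea is that in $L[E]$, every $\sigma$-complete ultrafilter is derived from an extender on the sequence $E$, and irreducibility on the ultrafilter side corresponds to the derived extender being essentially generated by a single generator. Combined with the fine structural Dodd-soundness of extenders on a Mitchell--Steel sequence, this forces the irreducible ultrafilter to be Dodd sound. Now the contrapositive of the statement "Dodd sound non $p$-point ultrafilter is non-Galvin" (Corollary \ref{Corollary: nonppoint+DoddSound implies non-Galvin}) together with non-Galvin cardinals sitting at the level of superstrongness (Proposition \ref{Proposition: UA implies compact above non-Galvin}) shows that a Dodd sound irreducible non-$p$-point would yield a non-Galvin cardinal, which by Main Theorem \ref{mthorem: non-Galvin implies existence of ultrafilter} and the analysis of non-Galvin cardinals in canonical inner models would push us above a superstrong, contradicting the hypothesis. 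Hence every irreducible is a $p$-point.

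I expect the main obstacle to be this third step, namely relating the inner-model-theoretic absence of superstrongs to the absence of Dodd sound non-$p$-point irreducibles. The delicate point is identifying exactly which level of the Mitchell--Steel hierarchy is needed to produce an irreducible that fails to be a $p$-point; heuristically, producing a long extender whose derived ultrafilter is irreducible but not a $p$-point should require the critical point to be close to a superstrong, and making this precise in $L[E]$ is the heart of the argument. Once this extender-theoretic lemma is in hand, assembling it with the factorization theorem and Theorem \ref{Theorem: Galvin Improvment} finishes the proof.
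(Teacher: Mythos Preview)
This theorem is not proved in the present paper; it is quoted from \cite{SatInCan}, and the paper offers only the one-sentence gloss ``in $L[E]$ every $\kappa$-complete ultrafilter takes the form of Theorem~\ref{Theorem: Galvin Improvment} and therefore satisfies the Galvin property.'' Your overall plan---factor into irreducibles via UA, show each irreducible is a $p$-point, then apply Theorem~\ref{Theorem: Galvin Improvment}---is exactly the intended architecture.

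The gap is in your justification of the key lemma that every $\kappa$-complete irreducible in $L[E]$ is a $p$-point. You argue: if not, Dodd soundness gives a non-Galvin ultrafilter, hence a non-Galvin cardinal, and ``non-Galvin cardinals sitting at the level of superstrongness'' would contradict the hypothesis. But the implication you need---that a non-Galvin cardinal forces the existence of a superstrong---is nowhere established, and the results you cite go the wrong way. Proposition~\ref{Proposition: UA implies compact above non-Galvin} places non-Galvin cardinals \emph{below} $\kappa$-compactness; Corollary~\ref{Cor: Superstrong} shows superstrong (with two skies) yields non-Galvin, not conversely; and Lemma~\ref{Lemma: Soundness implies non-Galvin cardinal} produces a non-Galvin cardinal from a Dodd sound non-$p$-point, but that embedding need not be superstrong. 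So your contradiction does not close.

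The argument in \cite{SatInCan} is instead a direct fine-structural one: below a superstrong, the extenders on a Mitchell--Steel sequence are short in the sense that their natural lengths do not overshoot the image of the critical point, and Schlutzenberg's analysis of Dodd soundness in $L[E]$ then forces each irreducible ultrafilter derived from the sequence to have a single generator above the critical point, hence to be a $p$-point. The ``no superstrong'' hypothesis enters precisely here, as a structural constraint on the extender sequence, not via any large-cardinal lower bound on non-Galvin cardinals.
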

The point here is that in $L[E]$ every $\kappa$-complete ultrafilter takes the form of Theorem \ref{Theorem: Galvin Improvment} and therefore satisfies the Galvin property. 

The existence of canonical inner models with superstrong cardinals is open, though provable from widely believed conjectures:
the fine structure for inner models with
superstrong cardinals has been developed
assuming iterability hypotheses \cite{SteelBook}.
Therefore the current knowledge about canonical inner models does not quite reach the level where a $\kappa$-complete non-Galvin ultrafilter exists, although our results below show that the
conditional canonical inner models built based on iterability hypotheses can contain non-Galvin ultrafilters.

Here we shall prove the following stronger (in several senses) result:
\begin{mtheorem}[UA]\label{mtheorem: characterization of sigma complete}
    Assume that every irreducible ultrafilter is Dodd sound (See Definition \ref{Definition: special properties}(6)).
    Then a uniform $\sigma$-complete ultrafilter over a regular cardinal has the Galvin property if and only if it is a $D$-sum of $n$-fold sums of $\kappa$-complete $p$-points over $\kappa$, where $D$ is a $\sigma$-complete ultrafilter over $\lambda<\kappa$.
\end{mtheorem}
We note that in the above theorem, the ultrafilter $D$ might be just a $\sigma$-complete ultrafilter over a cardinals $\lambda<\kappa$ (see Theorem \ref{Theorem: D-limit of p-points}). By results of Schlutzenberg \cite{Schlutz}, in the Mitchell-Steel extender models $L[E]$, every irreducible ultrafilter is Dodd sound, so the assumption in the theorem holds in $L[E]$. Hence Theorem \ref{mtheorem: characterization of sigma complete} implies  that in the canonical inner models of the form of $L[E]$, even above a superstrong cardinal, the $n$-fold sum of $p$-points, in fact, \textit{characterizes} the  ultrafilters with the Galvin property. This characterization implies for example that $\sigma$-complete ultrafilters over successor cardinals always possess the Galvin property (Corollary \ref{Cor: ultrafilter on successor is Galvin}). 

As a corollary, we obtain the characterization of the Tukey-top ultrafilters:
\begin{corollary}[UA]
    Assume that every irreducible ultrafilter is Dodd sound, then a $\sigma$-complete ultrafilter over a regular cardinal is Tukey-top if and only if it is 
    not a $D$-sum of $n$-fold sums of $\kappa$-complete $p$-points over $\kappa$, where $D$ is a $\sigma$-complete ultrafilter over $\lambda<\kappa$. 
\end{corollary}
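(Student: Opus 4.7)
The plan is to obtain this as an immediate consequence of Main Theorem \ref{mtheorem: characterization of sigma complete} combined with the equivalence between Tukey-topness and failure of the Galvin property that is recorded in the introduction. Specifically, for a uniform ultrafilter $U$ over $\kappa$, the paragraph preceding Isbell's construction already notes that $U$ is Tukey-top with respect to $\kappa$ if and only if $\neg\mathrm{Gal}(U,\kappa,2^\kappa)$; I would restate this as a small lemma (or simply cite it) so that being non-Galvin is synonymous with being Tukey-top in the sense intended by the corollary.

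With that in hand, I would argue as follows. Let $U$ be a $\sigma$-complete uniform ultrafilter over a regular cardinal $\kappa$. By the equivalence above, $U$ is Tukey-top iff $U$ fails the Galvin property. By Main Theorem \ref{mtheorem: characterization of sigma complete} (applied under the standing assumption that every irreducible ultrafilter is Dodd sound), $U$ has the Galvin property iff $U$ is a $D$-limit of $n$-fold sums of $\kappa$-complete $p$-points on $\kappa$ for some $\sigma$-complete ultrafilter $D$. Taking the contrapositive of the latter and chaining the two equivalences yields the characterization in the statement.

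Since the entire substance of the argument is already contained in Main Theorem \ref{mtheorem: characterization of sigma complete}, there is no real obstacle to overcome here: the corollary is a translation of that theorem from the language of the Galvin property to the language of the Tukey order. The one point that warrants care is purely terminological, namely to confirm that the phrase ``$D$-sum of $n$-fold sums of $\kappa$-complete $p$-points'' in the corollary refers to the same class of ultrafilters as the ``$D$-limit of $n$-fold sums of $\kappa$-complete $p$-points'' appearing in Main Theorem \ref{mtheorem: characterization of sigma complete}; this should be settled by a pointer to Definition \ref{Definition: sums and limits}, after which no further work is required.
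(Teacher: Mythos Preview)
Your proposal is correct and matches the paper's approach: the corollary is stated in the introduction immediately after Main Theorem \ref{mtheorem: characterization of sigma complete} with no separate proof, precisely because it follows by combining that theorem with the equivalence ``Tukey-top with respect to \(\kappa\) \(\Leftrightarrow\) non-Galvin'' recorded just a few paragraphs earlier. Your remark about reconciling ``\(D\)-sum'' versus ``\(D\)-limit'' via Definition \ref{Definition: sums and limits} is appropriate, and nothing further is needed.
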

This corollary may come as a bit of a surprise
if one is familiar with the Tukey order on $\omega$: Dobrinen and Raghavan proved independently that it is consistent that there are non-Tukey-top
ultrafilters on \(\omega\) that are not $n$-fold sums of $p$-points \cite{Blass/Dobrinen/Raghavan15}, more specifically, a generic ultrafilter for $P(\omega\times \omega)/\text{fin}\cdot\text{fin}$ is such an ultrafilter; this result was stretched by Dobrinen in
\cite{DobrinenJSL15,DobrinenJML2016}.

One might suspect that under these very restrictive assumptions, we again run into the situation where every $\kappa$-complete ultrafilter has the Galvin property, but by theorem \ref{mthorem: non-Galvin implies existence of ultrafilter}, a non-Galvin cardinal suffices to guarantee the existence of a non-Galvin ultrafilter. Our next result suggests that in the canonical inner models, non-Galvin cardinals are exactly the large cardinal assumption needed to ensure the existence of non-Galvin ultrafilters:
\begin{mtheorem}[UA]\label{mtheorem: non-Galvin ulder UA is optimal}
    Assume that every irreducible ultrafilter is Dodd sound. If there is a $\kappa$-complete non-Galvin ultrafilter on an uncountable cardinal $\kappa$, then there is a non-Galvin cardinal.
\end{mtheorem}

One feature which seems to require more effort is to obtain a non-Galvin ultrafilter which extends the club filter (i.e. $q$-point). The ultrafilters that were constructed in \cite{TomNatasha} from a $\kappa$-compact cardinal extended the club filter and it is not clear at this point whether a non-Galvin cardinal implies the existence of such ultrafilters. Nonetheless, in the canonical inner models, the implication holds. In fact, the existence of a non-Galvin ultrafilter is equivalent to the existence of a non-Galvin $q$-point:
\begin{mtheorem}[UA]\label{mtheorem: extending the club filter under UA}
    Assume every irreducible ultrafilter
    is Dodd sound.
    Suppose $\kappa$ is 
    an uncountable cardinal
    that carries a $\kappa$-complete non-Galvin ultrafilter.
    Then the Ketonen least non-Galvin
    $\kappa$-complete
    ultrafilter on $\kappa$
    extends the closed unbounded filter.
\end{mtheorem}
The organization of this paper is as follows:
\begin{itemize}
    \item In section~\S1, we collect some basic definitions and facts from the theory of ultrafilters.
    \item In Section ~\S2, we establish the connection between non-Galvin ultrafilters and various  diamond-like principles. 
    \item In Section ~\S3, we use partial soundness to conclude that some ultrafilter is non-Galvin and define the corresponding diamond  $\diamondsuit^-_{\text{thin}}$.
    
    \item In Section ~\S4, we introduce the non-Galvin cardinals and prove Main Theorem \ref{mthorem: non-Galvin implies existence of ultrafilter}.
    
    \item In Section ~\S5, we work in the canonical inner models and prove Main Theorems \ref{mthorem: kappa complete galvin characterization},\ref{mtheorem: characterization of sigma complete},\ref{mtheorem: non-Galvin ulder UA is optimal},\ref{mtheorem: extending the club filter under UA}.
    
    \item  In Section ~\S6, we state some open questions and suggest further directions.
\end{itemize}
\subsection{Notation} Our notation is mostly standard. Let $\kappa$ be a cardinal and $X$ be any set. Then $[X]^\kappa=\{Y\in P(X)\mid |Y|=\kappa\}$ and $[X]^{<\kappa}=\{Y\in P(X)\mid |Y|<\kappa\}$. When $X$ is a set of ordinals, we identify elements of $[X]^{<\kappa}$ with their increasing enumerations. We write ${}^{<\kappa}X$ for the set of all functions $f\colon\gamma\rightarrow X$ where $\gamma<\kappa$ and ${}^\alpha X$ for the set of all functions $f\colon\alpha\rightarrow X$. 
Let $\kappa$ be regular. For two subsets of $\kappa$, we write  $X\subseteq^* Y$ to denote that  $X\setminus Y$ is bounded in $\kappa$. Similarly, for $f,g\colon\kappa\rightarrow\kappa$ we denote $f\leq^* g$ if there is $\alpha<\kappa$ such that for every $\alpha\leq\beta<\kappa$, $f(\beta)\leq g(\beta)$.  We say that $C\subseteq \kappa$ is a {\em closed unbounded} (or \textit{club}) subset of $\kappa$ if it is a closed subset with respect to the order topology on $\kappa$ and unbounded in the ordinals below $\kappa$. The {\em club filter} over \(\kappa\) is the filter:
$$\text{Club}_\kappa:=\{X\subseteq \kappa\mid X\text{ includes a closed unbounded subset of }\kappa\}.$$
If $f\colon A\rightarrow B$ is a function,
then $f``(X)=\{f(x)\mid x\in X\}$ and $f^{-1}[Y]=\{a\in A\mid f(a)\in Y\}$.

\section{Preliminaries}
  
 We only consider $\sigma$-complete ultrafilters over regular cardinals in this paper. We will, however, consider ultrafilters on \(\kappa\)
 that fail to be uniform or \(\kappa\)-complete. For a $\sigma$-complete ultrafilter $U$, we denote by $M_U$ the transitive collapse of the ultrapower of the universe of sets by $U$ and by $j_U\colon V\rightarrow M_U$ the usual ultrapower embedding. Given an elementary embedding $j\colon V\rightarrow M$ and an object $A\in M$, we let $\rho=\min\{\alpha\mid A\in V_{j(\alpha)}\}$ and define  $D(j,A):=\{X\subseteq V_\rho\mid A\in j(X)\}$. When $A$ is an ordinal, we will always replace $V_{\rho}$ in the above definition by $\rho$. If $M$ is any model of ZFC and \(f\) is a function or relation defined in the language of set theory, the relativization of \(f\) to this model is denoted by $(f)^M$; for example,
if \(\kappa\in M\), we might consider $(\kappa^+)^M$, $V_\kappa^M$, etc. 

The primary large cardinals we will be interested in are measurable cardinals. We say that
a cardinal $\kappa$ is \textit{measurable} if it  carries a non-principal $\kappa$-complete ultrafilter. In the introduction, we also mentioned the \textit{compact cardinals,} which can be characterized using the filter extension property: we say $\kappa$ has the  \textit{$\lambda$-filter extension property} if every $\kappa$-complete filter on $\lambda$ can be extended to a $\kappa$-complete ultrafilter. A \textit{$\kappa$-compact cardinal} is a cardinal $\kappa$ which has that $\kappa$-filter extension property. For more background on large cardinals, we refer the reader to \cite{kanamori1994}.

\begin{definition}[Special properties of ultrafilters]\label{Definition: special properties}
Let $U$ be an ultrafilter over a regular cardinal $\kappa$. We say that: \begin{enumerate}
\item A function \(f\) on \(\kappa\)
    is said to be \textit{constant (mod \(U\))} if there is a
    set \(A\in U\) such that \(f\restriction A\) is constant. A function $f$ is \textit{unbounded (mod $U$)} if $\forall \alpha<\kappa$, $f^{-1}[\alpha]\notin U$.
    A function \(f\) is \textit{almost one-to-one (mod \(U\))} if there is a set \(A\in U\)
    such that \(f\restriction A\) is almost one-to-one in the sense that for any \(x\),
    \(\{\alpha\in A : f(\alpha) = x\}\) is bounded below \(\kappa\).
    \item $U$ is a \textit{$p$-point} if every function $f\colon\kappa\rightarrow \kappa$ which is unbounded $(\text{mod }U)$ is almost one-to-one $(\text{mod }U)$.\footnote{Note that for $\kappa$-complete ultrafilters over $\kappa$ this is equivalent to the definition of $p$-points using the existence of pseudo-intersections \cite{Kanamori1976UltrafiltersOA}. In general, without assuming $\kappa$-completeness, these definitions are not equivalent.}
    \item $U$ is \textit{$\mu$-indecomposable} if for any function $f\colon \kappa\rightarrow \mu$, there is $\mu'<\mu$ such that $f^{-1}[\mu']\in U$.
    \item  $U$ is  \textit{weakly normal} if whenever $f\colon A\rightarrow \kappa$ is such that $A\in U$ and $f$ is regressive, there is $A'\subseteq A$, $A'\in U$ such that $f''[A']$ is bounded.\footnote{The notion of decomposability and weak normality makes sense also for filters when requiring the sets to be positive instead of measure $1$.}
    \item $U$ is \textit{$\alpha$-sound} if the function $j^\alpha\colon P(\kappa)\rightarrow M_U$ defined by $j^\alpha(X)=j_U(X)\cap \alpha$ belongs to $M_U$.
    \item  $U$ is \textit{Dodd sound} if it is $[\text{id}]_U$-sound.
    \item $U$ is \textit{$\kappa$-irreducible} if  every ultrafilter $W$ on an ordinal $\lambda<\kappa$ 
    that is Rudin-Frol\'ik below $U$ is principal.
    (See Definition \ref{def:RudinOrders}.)
\end{enumerate}
\end{definition}

\begin{remark}
    \begin{enumerate}
        \item The concept of Dodd soundness arose in inner model theory, where it
serves as a strong form of the initial segment condition \cite{Schimmerling1995}.
Though on first glance it may appear quite different, the 
Dodd soundness of a mouse is essentially equivalent to the Dodd soundness of its last extender as defined above. The formulation of Dodd soundness given here is due to Goldberg \cite{GoldbergUA}.
   \item  Note that if $U$ is $\alpha$-sound then $\{j_U(A)\cap\alpha\mid A\subseteq \kappa\}\in M_U$. This is in fact equivalent. Indeed, if $\{j_U(A)\cap\alpha\mid A\subseteq \kappa\}\in M_U$ then  it is the inverse of the transitive collapse of $\{j(S)\cap [\text{id}]_U\mid S\in P(\kappa)\}$. \item Note that if $U$ is an ultrafilter over a regular cardinal $\kappa$, and $\lambda<\kappa$ is such that $\lambda\in U$, then automatically, $U$ is a $p$-point as for any function $f:\kappa\rightarrow\kappa$, $f\restriction\lambda$ is bounded and hence there are no unbounded functions mod $U$.
    \item 
    If $U$ is irreducible and uniform on $\lambda$, then $U$ is $\lambda$-irreducible.
    \end{enumerate}
\end{remark}

\begin{proposition}\label{Proposition: Functions properties}
    Let $f\colon\kappa\rightarrow\kappa$ be any function and $U$ an ultrafilter over $\kappa$.
    \begin{enumerate}
    \item  $f$ is  unbounded mod $U$ if and only if $\sup_{\alpha<\kappa}j_U(\alpha)\leq[f]_U$.
        \item $f$ is almost one-to-one mod $U$ if and only if there is a (monotone) function $g\colon\kappa\rightarrow\kappa$ such that $j_U(g)([f]_U)=[g\circ f]_U\geq [\text{id}]_U$.
    \end{enumerate}
\end{proposition}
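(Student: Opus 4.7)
The plan is to derive both parts from \L{}o\'s's theorem together with the definitions of ``unbounded'' and ``almost one-to-one'' mod $U$, taking a little care in part (2) to arrange monotonicity.

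For part (1), I would start from the definition and rephrase the unboundedness condition by taking complements: $f^{-1}[\alpha]\notin U$ if and only if $\{\beta<\kappa : f(\beta)\geq \alpha\}\in U$. Applying \L{}o\'s's theorem in $M_U$ converts this directly into $j_U(\alpha)\leq [f]_U$. Since this is required to hold for all $\alpha<\kappa$, the condition is equivalent to $\sup_{\alpha<\kappa}j_U(\alpha)\leq [f]_U$, and the equivalence in both directions is immediate.

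For part (2), note first that $j_U(g)([f]_U)=[g\circ f]_U$ is automatic by the standard computation in the ultrapower, so the content is the equivalence between $f$ being almost one-to-one mod $U$ and the existence of a monotone $g$ with $[g\circ f]_U\geq [\mathrm{id}]_U$. The backward direction is short: if $g$ is such a function and $A=\{\beta<\kappa : g(f(\beta))\geq \beta\}\in U$, then for any $x$, any $\beta\in A$ with $f(\beta)=x$ satisfies $\beta\leq g(x)$, so the fiber $\{\beta\in A : f(\beta)=x\}$ is bounded below $\kappa$.

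For the forward direction, let $A\in U$ witness almost one-to-one-ness, and define $g_0(x)=\sup\{\beta\in A : f(\beta)=x\}+1$, which lies in $\kappa$ because each fiber is bounded. Then set $g(x)=\sup_{y\leq x}g_0(y)$ to produce a monotone function; this remains below $\kappa$ because $\kappa$ is regular and the sup is over fewer than $\kappa$ ordinals each below $\kappa$. For every $\beta\in A$, $g(f(\beta))\geq g_0(f(\beta))>\beta$, so by \L{}o\'s's theorem $[g\circ f]_U>[\mathrm{id}]_U$, as required. The only mild obstacle in the whole argument is this monotonization step, where regularity of $\kappa$ is what keeps the resulting $g$ a function into $\kappa$; everything else is bookkeeping with \L{}o\'s.
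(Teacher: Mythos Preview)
Your proof is correct and follows essentially the same approach as the paper. The only cosmetic difference in part~(2) is that the paper constructs the monotone $g$ in one step by setting $g(\alpha)=\sup\bigl(f^{-1}[\alpha+1]\cap A\bigr)$ (which is automatically monotone since the sets are nested), whereas you first define the fiber bound $g_0$ and then monotonize; both rely on regularity of $\kappa$, which you note explicitly and the paper leaves to the standing hypothesis.
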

\begin{proof}
    $(1)$ is trivial. For $(2)$, Suppose that $f$ is almost one-to-one on $A\in U$, and let for each $\alpha<\kappa$ $g(\alpha)=\sup f^{-1}[\alpha+1]\cap A$. Then for each $\xi\in A$ $g(f(\xi))=\sup f^{-1}[f(\xi)+1]\cap A\geq \xi$, hence $[g\circ f]_U\geq [\text{id}]_U$. For the other direction, let $g$ be a monotone function such that $[g\circ f]_U\geq[\text{id}]_U$. Then there is a set $A\in U$ such that for each $\alpha\in A$, $g\circ f(\alpha)\geq\alpha$. Hence if $\beta\in f^{-1}[\alpha]$, then $g(\alpha)\geq g(f(\beta))\geq\beta$, hence $f^{-1}[\alpha]\subseteq g(\alpha)+1$.
\end{proof}
\begin{definition}\label{Definition: sums and limits}
Suppose $U$ is an ultrafilter over $X$ and for each $\alpha\in X$, $U_\alpha$ is an ultrafilter over $X_\alpha$. Define the limit
$$U\text{-}\lim \l U_\alpha\r_{\alpha\in X}=\big\{Y\subseteq X\mid \{\alpha\in X\mid Y\cap X_\alpha\in U_\alpha\}\in U\big\}$$
and the sum
$$\sum_U\l U_\alpha\r_{\alpha\in X}=\big\{Y\subseteq  \cup_{\alpha\in X}\{\alpha\}\times X_\alpha\mid \{\alpha\in X\mid (Y)_\alpha\in U_\alpha\}\in U\big\}$$
where $(Y)_\alpha=\{\beta\in X_\alpha\mid (\alpha,\beta)\in Y\}$ is the $\alpha^{\text{th}}$ fiber of $Y$.

The key property of sums is that they yield ultrafilters that
represent iterated ultrapowers:
\begin{lemma}[{\cite[Cor. 5.2.7]{GoldbergUA}}]
    If \(U\) is an ultrafilter on \(X\)
    and \(\langle W_\alpha\rangle_{\alpha\in X}\)
    is a sequence of ultrafilters, 
    then letting \(W^* = [\alpha\mapsto W_\alpha]_U\),
    \(M_{\sum_U \l W_\alpha\r_{\alpha\in X}} = (M_{W^*})^{M_U}\)
    and \(j_{\sum_U \l W_\alpha\r_{\alpha\in x}} = (j_{W^*})^{M_U}\circ j_U\).
    Moreover, \(U\text{-}\lim \l W_\alpha\r_{\alpha\in X} = j_U^{-1}[W^*].\)\qed
\end{lemma}
The sum construction is often used to obtain an ultrafilter
representing an iterated ultrapower in this way, and in this context, the
choice of the sequence \(\langle W_\alpha\rangle_{\alpha\in X}\)
representing \(W^*\) is usually irrelevant and distracting. For this reason, we introduce a notation that allows us to remain agnostic about this choice.

\begin{definition}\label{Definition:Frown}
    If \(U\) is an ultrafilter over $X$ and \(M_U\) satisfies that \(W^*\) is an ultrafilter, then \(U^\frown W^*\) denotes 
    \(\sum_U \l W_\alpha\r_{\alpha\in X}\), where
    \(W_\alpha\) is a sequence of ultrafilters such that \(W^* = [\alpha\mapsto W_\alpha]_U\).
\end{definition}

Technically, the
definition of \(U^\frown W^*\) depends on the choice of the
underlying sets of \(W_\alpha\). This 
ambiguity causes no issues, however, since if
\(W_\alpha'\) is another sequence such that 
\(W^* = [\alpha\mapsto W_\alpha']_U\),
then letting \(Z = \sum_U \l W_\alpha\r_{\alpha\in X}\) and \(Z' = \sum_U \l W'_\alpha\r_{\alpha\in X}\),
there is a set \(S\in Z\cap Z'\) such that \(Z \cap P(S) = Z'\cap P(S)\).

\begin{definition}
    We define recursively when $U$ is an \textit{$n$-fold sum of $p$-points}. $W$ is a $1$-fold sum of $p$-points if $W$ is a $p$-point. We say that $W$ is an $n+1$-fold sum of $p$-points if 
    there are $n$-fold sums of $p$-points $U_\alpha$ and a $p$-point ultrafilter $U$ such that $U$ is Rudin-Keisler equivalent to $\sum_{U}\l U_\alpha\r_{\alpha<\kappa}$. 
\end{definition}
We shall now prove a slight improvement of the form of ultrafilters which have the Galvin property in Theorem \ref{Theorem: Galvin Improvment}, this will be turn out to be an exact characterization of the ultrafilters with the Galvin property under UA plus every irreducible is Dodd sound in Main Theorem \ref{mtheorem: characterization of sigma complete}.  We need the definition of the modified diagonal intersection:
\begin{definition}
    Suppose that $W$ is a $\kappa$-complete ultrafilter over $\kappa$ and let $\pi_W:\kappa\rightarrow\kappa$ be the function which represents $\kappa$ mod $W$. For a sequence $\l A_i\r_{i<\kappa}$ of subsets of $\kappa$, we define the \textit{modified diagonal intersection} by
    $$\Delta^{W}_{i<\kappa}A_i=\{\alpha<\kappa\mid \forall i<\pi_W(\alpha), \ \alpha\in A_i\}$$
    
\end{definition}
\begin{fact}
If $W$ is a $\kappa$-complete ultrafilter over $\kappa$ and $\l A_i\r_{i<\kappa}\subseteq W$, then:
\begin{enumerate}
    \item $\Delta^W_{i<\kappa}A_i\in W$.
    \item for every $i_0<\kappa$, $(\Delta^W_{i<\kappa}A_i)\setminus (\pi^{-1}[i_0+1])\subseteq A_{i_0}$.
\end{enumerate}
\end{fact}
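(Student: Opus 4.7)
Both clauses unpack by direct manipulation, with no combinatorial ingenuity needed beyond care with the ultrapower calculation.

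For clause $(1)$, the plan is to verify $B:=\Delta^{W}_{i<\kappa}A_{i}\in W$ by checking $[\mathrm{id}]_{W}\in j_{W}(B)$. Unwinding the definition of $B$ and applying $j_{W}$, this reduces to the assertion: for every $i<j_{W}(\pi_{W})([\mathrm{id}]_{W})$, one has $[\mathrm{id}]_{W}\in j_{W}(\langle A_{j}\rangle_{j<\kappa})(i)$. Two observations close the gap. First, by the defining property $[\pi_{W}]_{W}=\kappa$, we have $j_{W}(\pi_{W})([\mathrm{id}]_{W})=\kappa$, so the quantifier collapses to $i<\kappa$. Second, $\kappa$-completeness and nonprincipality of $W$ force $\mathrm{crit}(j_{W})=\kappa$, hence $j_{W}(\langle A_{j}\rangle)(i)=j_{W}(A_{i})$ for every $i<\kappa$. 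The surviving condition $[\mathrm{id}]_{W}\in j_{W}(A_{i})$ for all $i<\kappa$ is then, by the usual ultrapower membership criterion, just the hypothesis $A_{i}\in W$.

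For clause $(2)$, the plan is a one-line direct check: if $\alpha\in \Delta^{W}_{i<\kappa}A_{i}$ and $\alpha\notin \pi_{W}^{-1}[i_{0}+1]$, then $\pi_{W}(\alpha)>i_{0}$, so $i=i_{0}$ is an admissible index in the universal statement defining $\Delta^{W}$, whence $\alpha\in A_{i_{0}}$.

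I do not anticipate any substantive obstacle. The only step that rewards vigilance is the commutation $j_{W}(\langle A_{j}\rangle)(i)=j_{W}(A_{i})$ for $i<\kappa$, which relies on $\mathrm{crit}(j_{W})=\kappa$ and would fail for merely $\sigma$-complete ultrafilters; this is the subtle use of $\kappa$-completeness in an otherwise routine computation.
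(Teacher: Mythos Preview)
Your argument is correct. The paper states this as a \emph{Fact} without proof, so there is no original argument to compare against; what you wrote is the standard verification one would expect. The only cosmetic point is that you invoke nonprincipality of $W$ to get $\mathrm{crit}(j_W)=\kappa$, which is not literally among the hypotheses---but it is implicit in the very definition of $\pi_W$ (a function $\pi_W\colon\kappa\to\kappa$ representing $\kappa$ cannot exist for a principal $W$), so no actual gap arises.
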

\begin{theorem}\label{Theorem: D-limit of p-points}
    Suppose that $\lambda<\kappa$, let $D$ be any ultrafilter over $\lambda$ and $\l W_\xi\r_{ \xi<\lambda}$ be a sequence of $n$-fold sums of $\kappa$-complete $p$-point ultrafilters over $\kappa$. Then $\sum_{D}\l W_\xi\r_{\xi<\lambda}$ has the Galvin property. 
\end{theorem}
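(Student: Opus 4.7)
The plan is to argue by induction on $n$, with the main difficulty concentrated in the base case. For the inductive step I exploit the associativity of ultrafilter sums: if each $W_\xi$ is an $(n+1)$-fold sum $\sum_{U_\xi}\langle V^\xi_\alpha\rangle_{\alpha<\kappa}$, where $U_\xi$ is a $\kappa$-complete $p$-point and each $V^\xi_\alpha$ is an $n$-fold sum of $\kappa$-complete $p$-points, then a direct computation on fibers shows that $\sum_D\langle W_\xi\rangle_{\xi<\lambda}$ is Rudin--Keisler equivalent to $\sum_{D'}\langle V^{\xi,\alpha}\rangle_{(\xi,\alpha)\in\lambda\times\kappa}$, where $D':=\sum_D\langle U_\xi\rangle_{\xi<\lambda}$ is an ultrafilter on $\lambda\times\kappa$. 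Since $D'$ is again an arbitrary ultrafilter and each $V^{\xi,\alpha}$ is an $n$-fold sum of $\kappa$-complete $p$-points, the inductive hypothesis delivers the Galvin property for $\sum_D\langle W_\xi\rangle$.

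The substance is thus the base case $n=1$, where each $W_\xi$ is a $\kappa$-complete $p$-point on $\kappa$. Given a sequence $\langle A_i\rangle_{i<2^\mu}$ of sets in $E:=\sum_D\langle W_\xi\rangle$ with $\mu:=|\lambda\times\kappa|$, I would first apply a pigeonhole argument to the support sets $S_i:=\{\xi:(A_i)_\xi\in W_\xi\}\in D$: in the natural regime $\kappa\geq\lambda$ there are at most $2^\lambda<2^\mu$ possible supports, so we may restrict to a subfamily of full size $2^\mu$ on which $S_i$ equals a common $S\in D$. Then I would construct $I=\{i_\alpha:\alpha<\kappa\}$ by transfinite recursion, maintaining the invariant that for every $\xi\in S$ and every $\alpha<\kappa$, $\bigcap_{\beta\leq\alpha}(A_{i_\beta})_\xi\in W_\xi$. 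Successor stages are immediate because $W_\xi$ is closed under finite intersections, and limit stages below $\kappa$ are handled by the $\kappa$-completeness of each $W_\xi$.

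The hardest step, which I expect to be the main obstacle, is closing the recursion to show that the resulting $\kappa$-many indices form a Galvin witness, i.e.\ $\{\xi\in S:\bigcap_{\alpha<\kappa}(A_{i_\alpha})_\xi\in W_\xi\}\in D$. Since each $W_\xi$ is only $\kappa$-complete, a $\kappa$-intersection need not belong to $W_\xi$ automatically, and the $p$-point property alone only yields a pseudo-intersection. The plan is to overcome this by choosing the $i_\alpha$'s coherently with the modified diagonal intersection just introduced, so that on each fiber the realized intersection agrees with $\Delta^{W_\xi}_{\alpha<\kappa}(A_{i_\alpha})_\xi$ modulo bounded (and hence $W_\xi$-negligible) errors. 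Combining this with the Galvin property of each $W_\xi$ supplied by Gitik--Benhamou (i.e.\ the $n=1$ instance of Theorem \ref{Theorem: Galvin Improvment}) to guarantee that genuine $\kappa$-intersections can actually be achieved on a $D$-large subset of $S$, and a final thinning of $I$ that absorbs these bounded errors uniformly across $\xi\in S$, should produce $\bigcap_{i\in I}A_i\in E$ as required.
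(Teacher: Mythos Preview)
Your inductive reduction via associativity is sound and, in fact, cleaner than the paper's direct treatment of general $n$; the paper instead works out the case $n=2$ explicitly as a template, carrying all levels simultaneously. So the overall architecture is fine, and you are right that everything comes down to the base case.

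However, your base case has a genuine gap. You correctly identify the obstacle: after choosing indices $i_\alpha$ for $\alpha<\kappa$, there is no reason that $\bigcap_{\alpha<\kappa}(A_{i_\alpha})_\xi\in W_\xi$ for any $\xi$, let alone $D$-many. Your proposed fixes do not work. Invoking the Galvin property of each $W_\xi$ individually is circular in the wrong direction: Galvin for $W_\xi$ would produce, for each $\xi$ separately, a \emph{different} set of $\kappa$-many indices whose $\xi$-fibers intersect largely, and there is no mechanism to merge these into a single index set good for $D$-many $\xi$. The suggestion to ``thin $I$ to absorb bounded errors uniformly'' also fails: the error set $\pi_{W_\xi}^{-1}[\alpha+1]$ in the modified diagonal intersection depends on $\alpha$ and grows unboundedly with $\alpha$, so no thinning of $I$ removes it.

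What you are missing is the paper's reference-index technique. One fixes an elementary submodel $\mathcal N\prec H(\theta)$ of size $\kappa$ containing the sequence $\langle A_i\rangle$, and sets $\alpha^*=\kappa^+\cap\mathcal N$. By elementarity, for any prescribed heights $\rho<\kappa$ and any bound $\delta<\alpha^*$, there exists $\mu\in(\delta,\alpha^*)$ such that each fiber $(A_\mu)_\xi$ agrees with $(A_{\alpha^*})_\xi$ up to height $\rho$. Choosing $\mu_i$ this way (with the heights given by the $p$-point bounding functions $\rho_{U_\xi}$), one shows that the set
\[
A_*=\bigcup_{\xi\in A_*^{(0)}}\{\xi\}\times\Bigl((A_{\alpha^*})_\xi\cap\Delta^{W_\xi}_{i<\kappa}(A_{\mu_i})_\xi\setminus\rho_{W_\xi}(\xi)\Bigr)
\]
lies in $E$ and is contained in $\bigcap_i A_{\mu_i}$: the point is that any element falling in the ``error region'' of the diagonal intersection is caught instead by the matching-with-$A_{\alpha^*}$ clause. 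This coupling of the diagonal intersection to a single external reference set is the idea that makes the argument close; without it, the base case does not go through.
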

\begin{proof}
Denote by $Z:=\sum_{D}\l W_\xi\r_{\xi<\lambda}$, and let us assume for simplicity of notation that $n=2$. Hence $Z=\sum_{D}\l\sum_{U_\xi}\l U_{\xi,\eta}\r_{\eta<\kappa}\r_{\xi<\lambda}$, where each $U_{\xi}$ and $U_{\xi,\eta}$ is a $\kappa$-complete $p$-point over $\kappa$. For $A\in Z$, define
    $$A^{(2)}_{i,j}=\{k<\kappa\mid \l i,j,k\r\in A\} $$
    $$A^{(1)}_i=\{j<\kappa\mid A^{(2)}_{i,j}\in U_{i,j}\}$$
    
    $$A^{(0)}=\{i<\lambda\mid A^{(1)}_i\in U_i\}.$$
     Note that $$A\in \sum_{D}\l\sum_{U_i}\l U_{i,j}\r_{j<\kappa}\r_{i<\lambda}\Leftrightarrow\{i<\lambda\mid (A)_i\in \sum_{U_i}\l U_{i,j}\r_{j<\kappa}\}\in D$$
     $$\Leftrightarrow\{ i<\lambda\mid \{j<\kappa\mid A^{(2)}_{i,j}\in U_{i,j}\}\in U_i\}\in D\Leftrightarrow A^{(0)}\in D.$$
     For any $W\in \{U_i\mid i<\lambda\}\cup\{U_{i,j}\mid i<\lambda,\ j<\kappa\}$, choose $\pi_W:\kappa\rightarrow\kappa$ such that $[\pi_W]_W=\kappa$ and $\pi_W$ is almost one-to-one. Such a function exists since $W$ is a $\kappa$-complete $p$-point. Define $\rho_W:\kappa\rightarrow\kappa$ by $$\rho_{W}(\alpha)=\sup\pi^{-1}_{W}[\alpha+1]+1.$$
     Next we define:  $$\rho^{(1)}(\alpha)=\sup_{i<\alpha}\rho_{U_i}(\alpha),\text{ and }\rho^{(2)}(\alpha)=\sup_{i,j<\alpha}\rho_{U_{i,j}}(\alpha).$$
     Note that $\rho^{(1)},\rho^{(2)}:\kappa\rightarrow\kappa$ since $\kappa$ is regular.
     Now we are ready to prove the Theorem. Let $\l A_i\r_{i<2^\kappa}$ be a sequence of sets in $Z$.
    Since $\lambda<\kappa$,we can assume without loss of generality that there is a set $A^{(0)}_*\in D$ such that for every $i<2^\kappa$, $A_*^{(0)}=(A_i)^{(0)}$.
    Let $\mathcal{N}$ be an elementary substructure of $H(\theta)$ for some large enough $\theta$ such that:
    \begin{enumerate}
        \item $|\mathcal{N}|=\kappa$.
        \item ${}^{<\kappa}\mathcal{N}\subseteq\mathcal{N}$.
        \item $\kappa\subseteq \mathcal{N}$ and $\kappa^+\cap\mathcal{N}\in \kappa^+$.
        \item  $\l A_i\r_{i<2^\kappa}\in \mathcal{N}$.
    \end{enumerate}
    Let $\alpha^*=\kappa^+\cap\mathcal{N}$.  
    \begin{claim}\label{claim: elementary submodel}
        For every $\l\alpha_1,\alpha_2\r\in [\kappa]^2$ and $\delta<\alpha^*$, there is $\delta<\beta<\alpha^*$ such that 
    \begin{enumerate}
        \item $\forall i\in (A_*)^{(0)}, (A_{\beta})^{(1)}_i\cap \alpha_1=(A_{\alpha^*})^{(1)}_i\cap \alpha_1$.
        \item $\forall i\in (A_*)^{(0)}\forall j<\alpha_1, \  (A_{\beta})^{(2)}_{i,j}\cap \alpha_2=(A_{\alpha^*})^{(2)}_{i,j}\cap\alpha_2$.
    \end{enumerate}
    \end{claim}
    \begin{proof}
        Consider the statement $$\phi(\alpha_1,\alpha_2,\delta)\equiv\exists \beta>\delta \ (1)\wedge(2)$$ $H(\theta)\models \phi(\alpha_1,\alpha_2,\delta)$ as witnessed by $\alpha^*$ and since $\alpha_1,\alpha_2,\delta\in \mathcal{N}$, the elementarity of $\mathcal{N}$ implies that there is such $\beta\in\mathcal{N}$ and in particular $\beta<\alpha^*$.   
    \end{proof}
    Define a sequence $\l \mu_i\mid i<\kappa\r$ inductively, suppose that $\l \mu_j\mid j<i\r$ was defined. Let $\delta=\sup_{j<i}\mu_j+1\in \mathcal{N}$ and apply the claim to $\delta$ and $$\  \alpha_1=\rho^{(1)}(i),\text{ and } \alpha_2=\rho^{(2)}(i)$$ to produce $\mu_i>\delta$ (and thus $\mu_i\neq \mu_j$ for all $j<i$). We claim that $$\bigcap_{i<\kappa}A_{\mu_i}\in \sum_D(\l\sum_{U_i}\l U_{i,j}\r_{j<\kappa}\r_{i<\lambda}.$$  

     To see this, we define for every $\xi\in (A_*)^{(0)}$,
$$(A_*)^{(1)}_\xi=(A_{\alpha^*})^{(1)}_\xi\cap\Delta^{U_\xi}_{i<\kappa}(A_{\mu_i})^{(1)}_{\xi}\setminus \rho_{U_\xi}(\xi)$$
     and for every $\xi\in (A_*)^{(0)}$, $\eta\in (A_*)^{(1)}_\xi$, define $$(A_*)^{(2)}_{\xi,\eta}=(A_{\alpha^*})^{(2)}_{\xi,\eta}\cap \Delta^{U_{\xi,\eta}}_{i<\kappa}(A_{\mu_i})^{(2)}_{\xi,\eta}\setminus \rho_{U_{\xi,\eta}}(\eta)$$
     Let $$A_*=\bigcup_{\xi\in A_*^{(0)}}\bigcup_{\eta\in (A_*)^{(1)}_\xi}\{\xi\}\times\{\eta\}\times (A_*)^{(2)}_{\xi,\eta}$$
     \begin{claim}
     For every $\l\alpha,\beta,\gamma\r\in A_*$, and for every $i<\kappa$, $\alpha\in (A_{\mu_i})^{(0)}$, $\beta\in (A_{\mu_i})^{(1)}_{\alpha}$ and $\gamma\in (A_{\mu_i})^{(2)}_{\alpha,\beta}$.
     \end{claim}
     \begin{proof}[Proof of claim.]
     Let $\l \alpha,\beta,\gamma\r\in A_*$. By definition of $A_*$, $\alpha\in (A_*)^{(0)}$, $\beta\in (A_*)^{(1)}_\alpha$ and $\gamma\in (A_*)^{(2)}_{\alpha,\beta}$. In particular, $$(*)\ \ \ \alpha<\pi_{U_\alpha}(\beta)\text{ and 
 }\beta<\pi_{U_{\alpha,\beta}}(\gamma).$$
     For $i<\kappa$, we note first that $\alpha\in (A_{\mu_i})^{(0)}$ since we assume $(A_{\mu_i})^{(0)}=(A_*)^{(0)}$. 
     Now to see that $\beta\in (A_{\mu_i})^{(1)}_\alpha$, split into cases. If $i<\pi_{U_\alpha}(\beta)$, then $\beta\in (A_{\mu_i})^{(1)}_\alpha$  by the definition of the modified diagonal intersection.
     If $i\geq \pi_{U_{\alpha}}(\beta)$, then  $\beta<\rho_{U_\alpha}(i)$. Also, by $(*)$, $\alpha<\pi_{U_\alpha}(\beta)\leq i$ and therefore $\rho_{U_\alpha}(i)\leq\sup_{\alpha<i}\rho_{U_\alpha}(i)=\rho^{(1)}(i)$. By the choice of $\mu_i$, (1) of Claim \ref{claim: elementary submodel}
     $$\beta\in (A_{\alpha^*})^{(1)}_\alpha\cap \rho^{(1)}(i)=(A_{\mu_i})^{(1)}_\alpha\cap \rho^{(1)}(i).$$
     Finally for $\gamma$, if $i<\pi_{U_{\alpha,\beta}}(\gamma)$, then $\gamma\in (A_{\mu_i})^{(2)}_{\alpha,\beta}$. If $i\geq \pi_{U_{\alpha,\beta}}(\gamma)$, then as in the previous paragraph, $\beta<\pi_{U_{\alpha,\beta}}(\gamma)\leq i$ and thus $$\gamma<\rho_{U_{\alpha,\beta}}(i)\leq \rho^{(2)}(i).$$ We conclude that $\gamma\in (A_{\alpha^*})^{(2)}_{\alpha,\beta}\cap \rho^{(2)}(i)$. By the choice of $\mu_i$ and (2) of Claim \ref{claim: elementary submodel},  $\gamma\in (A_{\mu_i})^{(2)}_{\alpha,\beta}\cap \rho^{(2)}(i)$. 
     \end{proof}
     By the claim, that for every $\l\alpha,\beta,\gamma\r\in A_*$ and every $i<\kappa$,  $\l \alpha,\beta,\gamma\r\in A_{\mu_i}$, namely
$A_*\subseteq\bigcap_{i<\kappa}A_{\mu_i}$. Finally, we note that $A_*\in Z$. Indeed,  $(A_*)^{(0)}\in D$ by the choice of $(A_*)^{(0)}$. Also, for every $i<\kappa$, and $\alpha\in (A_*)^{(0)}$, $\alpha\in (A_{\mu_i})^{(0)}$ and so $(A_{\mu_i})^{(1)}_\alpha\in U_\alpha$. We conclude $(A_*)^{(1)}_\alpha\in U_\alpha$. Also, for  $\beta\in (A_*)^{(1)}_{\alpha}$, $\beta\in (A_{\mu_i})^{(1)}_{\alpha}$ and therefore $(A_{\mu_i})^{(2)}_{\alpha,\beta}\in U_{\alpha,\beta}$. It follows that $(A_*)^{(2)}_{\alpha,\beta}\in U_{\alpha,\beta}$. Hence $A_*\in Z$, and in particular $\bigcap_{i<\kappa}A_{\mu_i}\in Z$.
\end{proof}

 \end{definition}
 Recall that the sequence of $\l U_\alpha\r_{\alpha\in X}$ is called \textit{discrete} if there is a sequence of pairwise disjoint sets $\l A_\alpha\r_{\alpha\in X}$  such that $A_\alpha\in U_\alpha$. We say that $\l U_\alpha\r_{\alpha\in X}$ is discrete mod $U$, if there is $Y\in U$, $Y\subseteq X$ such $\l U_\alpha\r_{\alpha\in Y}$ is discrete. 
 \begin{fact}     $\sum_U\l U_\alpha\r_{\alpha<\kappa}\equiv_{RK} U\text{-}\lim\l U_\alpha\r_{\alpha<\kappa}$ iff $\l U_\alpha\r_{ \alpha<\kappa}$ is discrete mod $U$.
 \end{fact}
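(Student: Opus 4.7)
The plan is to identify the sum $\sum_U\l U_\alpha\r$ with the $U$-limit via the natural second-coordinate projection, and then to reduce everything to the standard Rudin-Keisler fact that if $\pi_*W=V$ then $W\equiv_{RK}V$ precisely when $\pi$ is one-to-one on a set in $W$.

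First, consider the map $\pi\colon\bigcup_{\alpha\in X}\{\alpha\}\times X_\alpha\rightarrow\bigcup_{\alpha\in X}X_\alpha$ defined by $\pi(\alpha,\beta)=\beta$. I would check directly from the definitions that $\pi_*\bigl(\sum_U\l U_\alpha\r_{\alpha\in X}\bigr)=U\text{-}\lim\l U_\alpha\r_{\alpha\in X}$: for any $Y\subseteq\bigcup_\alpha X_\alpha$, one has $(\pi^{-1}[Y])_\alpha=Y\cap X_\alpha$, so $\pi^{-1}[Y]\in\sum_U\l U_\alpha\r$ iff $\{\alpha:Y\cap X_\alpha\in U_\alpha\}\in U$ iff $Y\in U\text{-}\lim\l U_\alpha\r$. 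In particular this already gives $\sum_U\l U_\alpha\r\geq_{RK}U\text{-}\lim\l U_\alpha\r$ unconditionally.

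For the $(\Leftarrow)$ direction, assume $\l U_\alpha\r$ is discrete mod $U$, witnessed by $Y\in U$ and pairwise disjoint $\l A_\alpha\r_{\alpha\in Y}$ with $A_\alpha\in U_\alpha$. Set $B=\bigcup_{\alpha\in Y}\{\alpha\}\times A_\alpha$. Then $(B)_\alpha=A_\alpha\in U_\alpha$ for each $\alpha\in Y$, so $B\in\sum_U\l U_\alpha\r$, and the pairwise disjointness of the $A_\alpha$ immediately makes $\pi\restriction B$ one-to-one. By the standard lemma (the restriction of $\pi$ to $B$ witnesses an RK-isomorphism between $\sum_U\l U_\alpha\r\restriction B$ and its push-forward under $\pi$, which is $U\text{-}\lim\l U_\alpha\r$), we conclude $\sum_U\l U_\alpha\r\equiv_{RK}U\text{-}\lim\l U_\alpha\r$.

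For the $(\Rightarrow)$ direction, suppose $\sum_U\l U_\alpha\r\equiv_{RK}U\text{-}\lim\l U_\alpha\r$. Because $\pi$ already pushes the sum onto the limit, the standard fact yields a set $B\in\sum_U\l U_\alpha\r$ on which $\pi$ is one-to-one. Setting $Y=\{\alpha\in X:(B)_\alpha\in U_\alpha\}\in U$ and $A_\alpha=(B)_\alpha$ for $\alpha\in Y$, injectivity of $\pi\restriction B$ forces $A_\alpha\cap A_{\alpha'}=\emptyset$ for distinct $\alpha,\alpha'\in Y$: if some $\beta$ lay in both then $(\alpha,\beta)$ and $(\alpha',\beta)$ would both lie in $B$ and map to $\beta$. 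Hence $\l U_\alpha\r_{\alpha\in Y}$ is discrete, i.e. $\l U_\alpha\r$ is discrete mod $U$. The only non-trivial ingredient here is the standard RK lemma, which I expect to cite rather than prove, so no real obstacle arises.
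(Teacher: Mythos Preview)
Your argument is correct and is the standard proof of this well-known fact. Note that the paper does not actually prove this statement: it is recorded as a \emph{Fact} without proof, so there is nothing to compare against beyond observing that your approach is the expected one.
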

\begin{proposition}\label{descretep-ppoints}
     If $U$ is a $p$-point ultrafilter, then any sequence $\l U_\alpha\r_{ \alpha<\kappa}$ of distinct $\kappa$-complete  ultrafilters is discrete  mod $U$.
 \end{proposition}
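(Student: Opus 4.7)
The goal is to construct a function $f\colon\kappa\to\kappa$ with $\{\alpha<\kappa\colon f^{-1}(\alpha)\in U_\alpha\}\in U$; the fibers $A_\alpha:=f^{-1}(\alpha)$ then automatically form the pairwise disjoint witnesses to discreteness.

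The first step is to produce a triangular system of separators from distinctness. For each pair $\alpha\neq\beta$, the distinctness of $U_\alpha$ and $U_\beta$ provides $X_{\alpha,\beta}\in U_\alpha\setminus U_\beta$. For each $\alpha<\kappa$, applying $\kappa$-completeness of $U_\alpha$ to the fewer than $\kappa$ many members $\{X_{\alpha,\beta}\colon\beta<\alpha\}$ yields $Y_\alpha:=\bigcap_{\beta<\alpha}X_{\alpha,\beta}\in U_\alpha$, which satisfies $Y_\alpha\notin U_\beta$ for every $\beta<\alpha$ by construction.

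Next, define $f(\xi):=\max\{\alpha\le\xi\colon \xi\in Y_\alpha\}$ (with default $0$ when the set is empty). Computing directly, the fiber decomposes as $f^{-1}(\alpha)=Y_\alpha\cap[\alpha,\kappa)\setminus T_\alpha$, where the \emph{overflow} set $T_\alpha:=\{\xi\colon \exists\gamma\in(\alpha,\xi],\ \xi\in Y_\gamma\}$ records indices where a larger $\gamma$ also captures $\xi$. Since $Y_\alpha\cap[\alpha,\kappa)\in U_\alpha$ by uniformity, the whole proposition reduces to showing $T_\alpha\notin U_\alpha$ for $U$-almost every $\alpha$.

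This last step is where the $p$-point hypothesis on $U$ enters. Assume for contradiction that $T_\alpha\in U_\alpha$ on a $U$-large set of $\alpha$; on $T_\alpha$ we may select $\gamma_\xi\in(\alpha,\xi]$ with $\xi\in Y_{\gamma_\xi}$. Passing to $M_U$ with $W=[\alpha\mapsto U_\alpha]_U$, Los's theorem translates this into a $W$-positive statement about a function landing in $(\eta,j_U(\kappa))$ where $\eta=[\mathrm{id}]_U$. Applying the $p$-point characterization of Proposition~\ref{Proposition: Functions properties}(2) to this pushforward produces a monotone $g\colon\kappa\to\kappa$ in $V$ sending the generic $W$-point back down to $\eta$; unwinding via Los, this forces some $Y_\delta$ with $\delta<\alpha$ to belong to $U_\alpha$ on a $U$-large set of $\alpha$, contradicting the defining property of the $Y$'s from Step 1. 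The main obstacle is this final translation: the $p$-point is a property of the single ultrafilter $U$, but it must be used to control the collective overflow $T_\alpha$ uniformly across the sequence $\langle U_\alpha\rangle$, which requires a careful ultrapower calculation in the spirit of Theorem~\ref{Theorem: D-limit of p-points}.
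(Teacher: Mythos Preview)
The paper does not actually prove this proposition; it simply cites Kanamori's Corollary~5.15. So the comparison is to Kanamori's argument, not to anything in the paper itself.

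Your outline has a genuine gap at Step~6, and in fact the sketch there does not hold together. Two separate problems:

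First, the ``contradiction'' you aim for is not one. Your $Y_\alpha$'s satisfy $Y_\alpha\notin U_\beta$ for $\beta<\alpha$ --- the \emph{larger}-indexed $Y$ avoids the \emph{smaller}-indexed ultrafilter. Your final clause asserts that the argument ``forces some $Y_\delta$ with $\delta<\alpha$ to belong to $U_\alpha$,'' i.e.\ the \emph{smaller}-indexed $Y$ lands in the \emph{larger}-indexed ultrafilter. That is the opposite direction, and nothing in Step~2 forbids it. So even if the ultrapower manipulation produced what you claim, no contradiction would follow.

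Second, the invocation of Proposition~\ref{Proposition: Functions properties}(2) is not justified. That proposition concerns a function $f\colon\kappa\to\kappa$ measured by $U$ itself; the $p$-point hypothesis says such an $f$, if unbounded mod $U$, is almost one-to-one mod $U$. Your function $\xi\mapsto\gamma_\xi$ lives on $T_\alpha$ and is measured by $U_\alpha$ (or, after passing to $M_U$, by $W=[\alpha\mapsto U_\alpha]_U$). There is no mechanism offered for transferring the $p$-point property of $U$ to a statement about functions mod $W$. The phrase ``a careful ultrapower calculation in the spirit of Theorem~\ref{Theorem: D-limit of p-points}'' is an acknowledgment that this step is missing, not a proof of it. Concretely: you have $T_\alpha=\bigcup_{\gamma>\alpha}Y'_\gamma$, a $\kappa$-union of $U_\alpha$-null sets whose union is (by hypothesis) in $U_\alpha$; $\kappa$-completeness of $U_\alpha$ does not prevent this, and the $p$-pointness of the unrelated ultrafilter $U$ gives no direct leverage on it.

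A minor additional issue: in Step~3 the maximum $\max\{\alpha\le\xi:\xi\in Y_\alpha\}$ need not exist when $\xi$ is a limit ordinal and the set is cofinal below $\xi$; you would need to handle this (e.g.\ by taking $\sup$ and arguing it is attained, or by redefining).
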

 \begin{proof}
     See \cite[Cor. 5.15]{Kanamori1976UltrafiltersOA}.
 \end{proof}
\begin{definition}(Orderings of ultrafilters)\label{def:RudinOrders}
Let $U,W$ be ultrafilters over ordinals $\kappa,\lambda$ (resp.) define:
\begin{enumerate}
    \item the \textit{Rudin-Keisler order} by $U\leq_{RK}W$ if there is a function $\pi\colon \lambda\rightarrow \kappa$ such that $U=\{B\subseteq \kappa\mid \pi^{-1}[B]\in W\}$.
    \item the \textit{Rudin-Frol\'ik} order by $U\leq_{RF}W$ if there is a set $I\in U$ and a discrete sequence $\l W_i\r_{i\in I}$ of ultrafilters over $\kappa$ such that $W=U\text{-}\lim\l W_i\r_{i\in I}$.
    \item the \textit{Ketonen order} by $U<_{\Bbbk}W$ if $j_W''U$ is contained in a countably complete ultrafilter $U^*$ of \(M_W\) such that $[id]_W\in U^*$. 
\end{enumerate}
\end{definition}

For more background on ultrafilters, their orderings, and the Ultrapower Axiom, we refer the reader to \cite{Kanamori1976UltrafiltersOA} and \cite{GoldbergUA}.

 We also record here the definition and basic properties of the \textit{canonical functions}.
\begin{definition}\label{Definition: Canonical functions}
For every $\eta<\kappa^+$, we fix a cofinal sequence $\l \eta_i\r_{i<\cf(\eta)}$. Define recursively the canonical functions $f_\alpha\colon\kappa\rightarrow\kappa$ for $\alpha<\kappa^+$ as follows:
$f_0=0$ is the constant function with value $0$.
Given $f_\alpha$, define $f_{\alpha+1}(x)=f_\alpha(x)+1$.
For limit $\eta<\kappa^+$ we split into cases:
\begin{enumerate}
    \item if $\cf(\eta)<\kappa$, define $f_\eta(x)=\sup_{i<\cf(\eta)}f_{\eta_i}(x)$.
    \item if $\cf(\eta)=\kappa$, define $f_\eta(x)=\sup_{i<x}f_{\eta_i}(x)$.
\end{enumerate}
\end{definition}
It is not hard to see that the canonical functions are increasing modulo the bounded ideal, but
the main reason we are interested in those functions is the following: \begin{proposition}\label{Prop: Canonical functions}
Let $k\colon N\rightarrow M$ be an elementary embedding (not necessarily definable in $N$) with critical point $\kappa$. Then for every $\alpha<(\kappa^+)^N$, $k(f_\alpha)(\kappa)=\alpha$. 
\end{proposition}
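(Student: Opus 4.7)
The plan is to proceed by transfinite induction on $\alpha < (\kappa^+)^N$, mirroring the recursive definition of $f_\alpha$ in Definition \ref{Definition: Canonical functions}. The key point to exploit throughout is elementarity of $k$: for each $\alpha < (\kappa^+)^N$, $k(f_\alpha) = f^M_{k(\alpha)}$, where $f^M_{k(\alpha)}$ is $M$'s canonical function attached to the ordinal $k(\alpha)$ using $M$'s chosen cofinal sequence. Thus the whole problem reduces to evaluating $f^M_{k(\alpha)}$ at $\kappa$ and verifying the result equals $\alpha$, given that $\crit(k)=\kappa$.

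The base and successor cases are immediate. For $\alpha = 0$, the constant-zero function is preserved, so $k(f_0)(\kappa)=0$. For $\alpha = \beta+1$, the identity $f_{\beta+1}(x)=f_\beta(x)+1$ transfers by elementarity, giving $k(f_{\beta+1})(\kappa)=k(f_\beta)(\kappa)+1=\beta+1$ by the inductive hypothesis. For a limit $\eta$, the main task is to trace through the two subcases. If $\cf^N(\eta)<\kappa$, then by elementarity $\cf^M(k(\eta))=k(\cf(\eta))=\cf(\eta)$, and for $i<\cf(\eta)$ we have $k(\eta)_i = k(\eta_i)$ because $k$ fixes ordinals below $\kappa$. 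Hence
\[
k(f_\eta)(\kappa) = f^M_{k(\eta)}(\kappa) = \sup_{i<\cf(\eta)} f^M_{k(\eta_i)}(\kappa) = \sup_{i<\cf(\eta)} k(f_{\eta_i})(\kappa) = \sup_{i<\cf(\eta)} \eta_i = \eta,
\]
where the penultimate step uses the inductive hypothesis applied to each $\eta_i<\eta$. If instead $\cf^N(\eta)=\kappa$, then $M$ lands in the second clause of the recursion, so $f^M_{k(\eta)}(\kappa)=\sup_{i<\kappa} f^M_{k(\eta)_i}(\kappa)$; again $k(\eta)_i = k(\eta_i)$ for $i<\kappa$, and the same calculation yields $k(f_\eta)(\kappa)=\sup_{i<\kappa}\eta_i=\eta$.

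The only real bookkeeping obstacle is making sure that in the limit stages the cofinal sequence $M$ attaches to $k(\eta)$ agrees, on its initial segment of length $\min(\cf(\eta),\kappa)$, with the $k$-image of $N$'s chosen sequence for $\eta$. This is exactly the content of elementarity combined with $k\restriction\kappa=\mathrm{id}$, so there is nothing further to verify and the induction closes.
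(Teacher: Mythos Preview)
Your proof is correct and follows essentially the same approach as the paper: both proceed by transfinite induction on $\alpha$, handle the base and successor cases trivially, and in the two limit subcases use that $k$ acts pointwise on the initial segment of the cofinal sequence (length $\cf(\eta)$ or $\kappa$) because $\crit(k)=\kappa$. The only cosmetic difference is that you phrase the computation via the notation $f^M_{k(\alpha)}$ for $M$'s canonical function, whereas the paper computes $k(f_\eta)(\kappa)$ directly without introducing that notation; the underlying argument is identical.
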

\begin{proof}
    By induction on $\alpha$. Clearly, for $\alpha=0$, $k(f_0)(\kappa)=0$ and if $k(f_\alpha)(\kappa)=\alpha$ then by elementarity $k(f_{\alpha+1})(\kappa)=\alpha+1$.
    For limit $\eta$, if $\cf(\eta)<\kappa$, then the functions used in the definition of $f_\eta$ are $\l f_{\eta_i}\r_{i<\cf(\eta)}$ are pointwise mapped by $k$;
    that is, $k(\l f_{\eta_i}\mid i<\cf(\eta)\r)=\l k(f_{\eta_i})\mid i<\cf(\eta)\r$. It follows by elementarity and the definition of $f_\eta$ that
    $k(f_\eta)(\kappa)=\sup_{i<\cf(\eta)}k(f_{\eta_i})(\kappa)$.
    Hence by the induction hypothesis, $k(f_\eta)(\kappa)=\sup_{i<\cf(\eta)}\eta_i=\eta$.
    If $\cf(\eta)=\kappa$ then the sequence $\l f_{\eta_i}\mid i<\kappa\r$ is stretched by $k$ to  $k(\l f_{\eta_i}\mid i<\kappa\r)=\l f'_{\eta_i}\mid i<k(\kappa)\r$ but for every $i<\kappa$,
    as $k(i)=i$, we have $f'_{\eta_i}=k(f_{\eta_i})$. Again by the definition of $f_\eta$, elementarity, and the induction hypothesis, we conclude that:
    $$k(f_\eta)(\kappa)=\sup_{i<\kappa}f'_{\eta_i}(\kappa)=\sup_{i<\kappa}k(f_{\eta_i})(\kappa)=\sup_{i<\kappa}\eta_i=\eta.$$
\end{proof}
\section{Diamond-like principle and the Galvin property}
In \cite{bgs}, a relation between Kurepa trees and the Galvin property has been established to construct a $\kappa$-complete non-Galvin ultrafilter. In this section, we exploit the deep connection between Kurepa trees and diamond principles which was first observed by Jensen \cite{JensenCombinatorics},  to find new combinatorial properties of ultrafilters which ensures the Galvin property.  
\begin{definition}\label{Definition: Diamond star}Let $S$ be a stationary set. $\diamondsuit^*(S)$ is the assertion that there is a sequence $\l\mathcal{A}_\alpha\r_{\alpha\in S}$ such that $\mathcal{A}_\alpha\subseteq P(\alpha)$ and:
\begin{enumerate}
    \item $|\mathcal{A}_\alpha|\leq\alpha$.
    \item for every $X\subseteq \kappa$ there is a club $C$ such that for each $\alpha\in C\cap S$, $C\cap\alpha,X\cap\alpha\in\mathcal{A}_\alpha$.
\end{enumerate}
\end{definition}
\begin{proposition}
    If $\diamondsuit^*(S)$ holds then any  ultrafilter $U$ over a regular cardinal $\kappa$ satisfying $\text{Club}_\kappa\cup\{S\}\subseteq U$ and $\cf^{M_U}([\text{id}]_U)\leq \crit(j_U)$ must be non-Galvin.
\end{proposition}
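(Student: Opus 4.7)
My plan is to argue by contradiction: assume $U$ has the Galvin property, and derive an inconsistency from the combination of $\diamondsuit^*(S)$ and the cofinality condition.

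First, fix an enumeration $\mathcal{A}_\alpha = \{B^\alpha_\beta : \beta < \alpha\}$ of the $\diamondsuit^*$-sequence (padding if necessary). For each $X \subseteq \kappa$, by $\diamondsuit^*(S)$ together with $\mathrm{Club}_\kappa \cup \{S\} \subseteq U$, the set $A_X := \{\alpha \in S : X \cap \alpha \in \mathcal{A}_\alpha\}$ lies in $U$, and the regressive coding function $g_X(\alpha) := \min\{\beta : B^\alpha_\beta = X \cap \alpha\}$ satisfies $[g_X]_U < [\mathrm{id}]_U$ in $M_U$. Since distinct $X,Y$ with unbounded symmetric difference force $g_X(\alpha) \neq g_Y(\alpha)$ on a cobounded (hence $U$-large) set, the map $X \mapsto [g_X]_U$ descends to an injection from $P(\kappa)/\!\sim$ (modulo bounded symmetric difference) into the $M_U$-ordinal $[\mathrm{id}]_U$, producing $2^\kappa$ distinct codes.

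Second, invoke the cofinality hypothesis to obtain a sequence $\langle c_i : i < \mu \rangle \in M_U$ cofinal in $[\mathrm{id}]_U$ with $\mu \leq \crit(j_U) \leq \kappa$, where each $c_i = [h_i]_U$ for a function $h_i\colon\kappa\to\kappa$ that is regressive on a $U$-large set. For every $X \subseteq \kappa$, $[g_X]_U < c_{i(X)}$ for some $i(X) < \mu$. Since $|P(\kappa)| = 2^\kappa > \mu$, pigeonholing yields a single $i^* < \mu$ and a family $\mathcal{W} \subseteq P(\kappa)$ of size $2^\kappa$ with $i(X) = i^*$ for all $X \in \mathcal{W}$. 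Thin $\mathcal{W}$ so that its elements pairwise have unbounded symmetric difference, and replace each $A_X$ ($X \in \mathcal{W}$) by $A_X \cap \{\alpha : g_X(\alpha) < h_{i^*}(\alpha)\} \in U$.

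Third, apply the assumed Galvin property to the $2^\kappa$-indexed family $\langle A_X : X \in \mathcal{W}\rangle$: extract $\mathcal{X} \subseteq \mathcal{W}$ of cardinality $\kappa$ with $T := \bigcap_{X \in \mathcal{X}} A_X \in U$. For every $\alpha \in T$ and every $X \in \mathcal{X}$ we have $g_X(\alpha) < h_{i^*}(\alpha) < \alpha$, and the codes $g_X(\alpha)$ for $X \in \mathcal{X}$ are pairwise distinct (because $X \mapsto X \cap \alpha$ is injective for $\alpha$ past the divergence heights). Thus at each level $\alpha \in T$, the $\kappa$-family $\mathcal{X}$ compresses to at most $h_{i^*}(\alpha) < \alpha$ distinct initial segments.

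The contradiction is then derived via a cardinality count in $M_U$: the Galvin conclusion forces $\{B_X^* : X \in \mathcal{X}\} \subseteq \mathcal{A}^*$, with these $\kappa$ distinct $M_U$-elements confined to the sub-family $\{B^*_\gamma : \gamma < c_{i^*}\}$ of $M_U$-size $\leq c_{i^*} < [\mathrm{id}]_U$, while the preselection in Step~2 already injected all of $\mathcal{W}/\!\sim$ (size $2^\kappa$) into this same region. Taken together with the decomposition $\mathcal{A}^* = \bigcup_{i<\mu}\{B^*_\gamma:\gamma<c_i\}$ and the cofinality bound, this puts $2^\kappa$ distinct codes into a region whose $V$-cardinality must be strictly smaller, in the spirit of the classical fact that $\diamondsuit^*(S)$ is incompatible with $S$ lying in a normal ultrafilter (the latter being exactly the degenerate case $[\mathrm{id}]_U = \kappa$).

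The main obstacle will be making this last cardinality tension precise: the naive bound gives only $|c_{i^*}|^V \leq 2^\kappa$, and the required strict bound demands a careful analysis of the regressive representation $h_{i^*}$ via a Fodor-style argument inside the ultrapower, exploiting that $\mu \leq \crit(j_U)$ (so the cofinal sequence is ``short'' from $M_U$'s internal perspective) together with the $\diamondsuit^*$-injectivity in Step~1.
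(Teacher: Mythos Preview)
Your setup through the Galvin application is essentially correct, but the proof is genuinely incomplete, and the direction you sketch for the final contradiction does not work. The ``cardinality count in $M_U$'' you propose cannot close the argument: the $2^\kappa$ (or even the $\kappa$) many ordinals $[g_X]_U$ are produced in $V$, not in $M_U$, so $M_U$ sees no family whose size it could bound. Your fallback claim that $c_{i^*}$ has $V$-cardinality strictly below $2^\kappa$ is unjustified and in general false; $c_{i^*}$ is an arbitrary $M_U$-ordinal below $[\text{id}]_U$ and its $V$-cardinality can perfectly well be $2^\kappa$. The injectivity of $X\mapsto [g_X]_U$ that you prove, while correct, is therefore a red herring.

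The right completion---which you almost name but then misplace ``inside the ultrapower''---is an ordinary Fodor argument in $V$: the function $h_{i^*}$ is regressive on the stationary set $T\in U$, so there is a stationary $S'\subseteq T$ and a fixed $\theta^*<\kappa$ with $h_{i^*}\restriction S'\equiv\theta^*$. Now take only $\theta^{*+}$ many sets from $\mathcal{X}$, choose $\alpha\in S'$ above all their pairwise divergence heights (possible since $\theta^{*+}<\kappa$ and $\kappa$ is regular), and obtain $\theta^{*+}$ distinct codes $g_X(\alpha)<h_{i^*}(\alpha)=\theta^*$, a contradiction. Note that your stated version of Step~3, asserting that \emph{all} $\kappa$ many codes are pairwise distinct at a single $\alpha$, is not justified and not needed.

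The paper's proof follows the same overall arc but encodes the cofinality hypothesis more directly: rather than fixing a cofinal sequence in $[\text{id}]_U$ and pigeonholing on the index $i^*$, it partitions each $\mathcal A_\alpha$ into $\cf(\alpha)$ pieces $I^\alpha_i$ of size ${<}\alpha$ and lets $f_X(\alpha)$ be the index of the piece containing $X\cap\alpha$. Since $[f_X]_U<\cf^{M_U}([\text{id}]_U)\leq\crit(j_U)$, each $f_X$ is \emph{constant} mod $U$ with value $\gamma_X<\kappa$; one pigeonholes on $\gamma_X=\gamma^*$, applies Galvin, and then Fodor on $\alpha\mapsto|I^\alpha_{\gamma^*}|$. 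This avoids your detour through representing the cofinal sequence and yields the same endgame.
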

\begin{proof}
     Suppose otherwise, and let $C_X$ for every $X\subseteq\kappa$ be the club guaranteed by item $(2)$ of  $\diamondsuit^*(S)$. Then $C_X\in U$. Also, for each $\alpha\in S$, let $\l I^\alpha_i\r_{i<\cf(\alpha)}$ be a partition of $\mathcal{A}_\alpha$ such that $|I^\alpha_i|<\alpha$. Now for each $X\subseteq\kappa$, consider the function $f_X:C_{X}\cap S\rightarrow \kappa$ defined by $f_X(\alpha)=i<\cf(\alpha)$  for the unique $i$ such that $X\cap \alpha\in I^\alpha_i$. Since $cf^{M_U}([id]_U)\leq \crit(j_U)$, there is a function $\pi:\kappa\rightarrow On$ such that $i<\cf(\alpha)\leq\pi(\alpha)$ and $[\pi]_U=\crit(j_U)$. It follows that there is $A_X\subseteq C_{X}\cap S$, $A_X\in U$ and $\gamma_X<\kappa$ such that for every $\alpha\in A_X$, $f_X(\alpha)=\gamma_X$. There are $2^\kappa$-many subsets with the same $\gamma_X=\gamma^*$.  Now apply Galvin's property to those $2^\kappa$-many sets in order find $\kappa$-many distinct subsets of $\kappa$, $\l X_\xi\r_{\xi<\kappa}$ for which $A^*:=\bigcap_{\xi<\kappa}A_{X_\xi}\in U$. Now for each $\alpha\in A^*\cap S$, $|I^\alpha_{\gamma^*}|<\alpha$. Since $\kappa$ is regular, we may apply F\"{o}dor's lemma to find a stationary set $S'\subseteq A^*\cap S$  and $\theta<\kappa$ such that $|I^\alpha_{\gamma^*}|=\theta$ for each $\alpha\in S'$. Consider $\l X_i\r_{i<\theta^{+}}$ and for each $i\neq j<\theta^{+}$ let $\beta_{i,j}<\kappa$ be high enough so that $X_i\cap \beta_{i,j}\neq X_j\cap \beta_{i,j}$. Take any $\alpha\in S'\setminus\sup_{i\neq j<\theta^{+}}\beta_{i,j}$. To reach a contradiction, note that on one hand, since $\alpha\in S'$, $|I^{\alpha}_{\gamma^*}|=\theta$. On the other hand, for every $i\neq j<\theta^{+}$, $X_i\cap \alpha\in I^{\alpha}_{\gamma^*}$ and the sets $X_i\cap\alpha$ are all distinct.  
 \end{proof}
Let us introduce a similar guessing principle $\diamondsuit^*_{\text{thin}}(U)$ to the one above, which can be formulated in terms of the ultrapower and does not involve the club filter. Then we will prove that $\diamondsuit^*_{\text{thin}}(U)$ implies that $U$ is non-Galvin.  
\begin{definition}\label{Definition: Diamond-Thin}
    
An ultrafilter \(W\) 
on a regular cardinal \(\kappa\) satisfies
\(\diamondsuit^*_\text{thin}(W)\)
if there is a sequence
of sets \(\langle\mathcal A_\alpha\rangle_{\alpha< \kappa}\) such that:
\begin{enumerate}
    \item for all \(A\subseteq \kappa\),
for \(W\)-almost all \(\alpha\),
\(A\cap \alpha \in \mathcal A_\alpha\).
\item \(\alpha\mapsto |A_\alpha|\) is not almost one-to-one mod \(W\).

\end{enumerate}
The sequence $\l\mathcal{A}_\alpha\r_{\alpha<\kappa}$ is called a \textit{$\diamondsuit^*_{\text{thin}}(U)$-sequence}.
\end{definition}
In the ultrapower, this is expressed as follows:
\begin{lemma}\label{Lemma: Ultrapower rep. of Diamond*}
    $\diamondsuit^*_{\text{thin}}(U)$ is equivalent to the existence of a set $A\in M_U$ such that:
    \begin{enumerate}
        \item $\{j_U(S)\cap [id]_U\mid S\subseteq\kappa\}\subseteq A$.
        \item  there is no function $f\colon\kappa\rightarrow\kappa$ such that $j_U(f)(|A|^M)\geq [id]_U$.
    \end{enumerate}
\end{lemma}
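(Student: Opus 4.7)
The lemma is a direct dictionary between the combinatorial formulation of $\diamondsuit^*_{\text{thin}}(U)$ and its ultrapower reformulation, and I would prove it by a straightforward \L o\'s's-theorem translation, with Proposition~\ref{Proposition: Functions properties}(2) doing the real work on the ``not almost one-to-one'' clause.

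For the forward direction, start with a witnessing sequence $\langle \mathcal{A}_\alpha\rangle_{\alpha<\kappa}$ and set $A:=[\alpha\mapsto \mathcal{A}_\alpha]_U\in M_U$. For (1): clause~(1) of Definition~\ref{Definition: Diamond-Thin} says that for each $S\subseteq\kappa$, the set $\{\alpha<\kappa : S\cap\alpha\in \mathcal{A}_\alpha\}$ lies in $U$, which by \L o\'s's theorem is precisely the statement that $j_U(S)\cap[\text{id}]_U\in A$. For (2): \L o\'s's theorem also gives $|A|^{M_U}=[\alpha\mapsto|\mathcal{A}_\alpha|]_U$, and Proposition~\ref{Proposition: Functions properties}(2) then identifies the failure of $\alpha\mapsto|\mathcal{A}_\alpha|$ to be almost one-to-one mod $U$ with the nonexistence of a monotone $f\colon\kappa\to\kappa$ such that $j_U(f)(|A|^{M_U})\geq[\text{id}]_U$. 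Allowing arbitrary $f$ (rather than monotone $f$) does not change the condition: if some $f$ witnesses $j_U(f)(|A|^{M_U})\geq[\text{id}]_U$, then its running supremum $\bar f(\xi):=\sup_{\zeta\leq\xi}f(\zeta)$ is monotone and dominates $f$ pointwise, so $j_U(\bar f)(|A|^{M_U})\geq j_U(f)(|A|^{M_U})\geq[\text{id}]_U$.

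For the reverse direction, given $A\in M_U$ satisfying (1) and (2), pick any representative $A=[\alpha\mapsto\mathcal{A}_\alpha]_U$. Running the same \L o\'s translation in reverse converts clause~(1) for $A$ into clause~(1) of Definition~\ref{Definition: Diamond-Thin}, and Proposition~\ref{Proposition: Functions properties}(2) together with the above remark on arbitrary versus monotone $f$ converts clause~(2) for $A$ into the statement that $\alpha\mapsto|\mathcal{A}_\alpha|$ is not almost one-to-one mod $U$.

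The argument is essentially a translation exercise; I do not anticipate any serious obstacle. The only subtlety worth flagging explicitly is the equivalence between the ``arbitrary $f$'' condition appearing in the lemma and the ``monotone $g$'' condition coming out of Proposition~\ref{Proposition: Functions properties}(2), which is handled by the running-supremum trick above.
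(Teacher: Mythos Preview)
Your proposal is correct and follows exactly the same approach as the paper: represent $A$ by a sequence $\langle \mathcal{A}_\alpha\rangle_{\alpha<\kappa}$, translate clause~(1) via \L o\'s, and use Proposition~\ref{Proposition: Functions properties}(2) for clause~(2). Your explicit treatment of the monotone versus arbitrary $f$ issue is a detail the paper leaves implicit, but otherwise the arguments are identical.
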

\begin{proof}
    The witnessing $\diamondsuit^*_{\text{thin}}(U)$-sequence is just the sequence  $\l \mathcal{A}_\alpha\r_{\alpha<\kappa}$ representing $A$ in $M_U$. Clearly, condition $(1)$ is equivalent to the fact that for every $S\subseteq \kappa$, $\{\alpha<\kappa\mid S\cap\alpha\in\mathcal{A}_\alpha\}\in U$. By Proposition \ref{Proposition: Functions properties}, condition $(2)$ is equivalent to the function $\alpha\mapsto |\mathcal{A}_\alpha|$ not being almost one-to-one mod $U$.
\end{proof}

\begin{lemma}\label{Lemma: Thin implies non-Galvin}
    If \(\diamondsuit^*_\textnormal{thin}(W)\),
    then \(W\) is non-Galvin.
\end{lemma}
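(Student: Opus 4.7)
The plan is to argue by contradiction, using the $\diamondsuit^*_{\text{thin}}(W)$-sequence to produce a family of $2^\kappa$ sets in $W$ whose Galvin intersection forces $\alpha\mapsto|\mathcal{A}_\alpha|$ to be almost one-to-one mod $W$, contradicting clause (2). To each $X\subseteq\kappa$ I associate the guessing set
\[B_X := \{\alpha<\kappa : X\cap\alpha\in\mathcal{A}_\alpha\},\]
which lies in $W$ by clause (1), yielding a family of $2^\kappa$ sets in $W$. Under the assumption that $W$ has the Galvin property, I obtain $\mathcal{F}\subseteq P(\kappa)$ with $|\mathcal{F}|=\kappa$ such that $Y:=\bigcap_{X\in\mathcal{F}}B_X\in W$, and for every $\alpha\in Y$ the map $X\mapsto X\cap\alpha$ embeds $\mathcal{F}$ into $\mathcal{A}_\alpha$.

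Next I set $\tau(\alpha):=|\{X\cap\alpha : X\in\mathcal{F}\}|$, so that $\tau(\alpha)\leq|\mathcal{A}_\alpha|$ on $Y$. The combinatorial heart of the argument is that $\tau$ is almost one-to-one. It is plainly non-decreasing in $\alpha$, and it is unbounded below $\kappa$: given any cardinal $\mu<\kappa$, pick $\mu$ distinct sets from $\mathcal{F}$; the $\mu^{2}<\kappa$ pairwise first points of disagreement have supremum below $\kappa$ by regularity, and above this supremum those $\mu$ sets have pairwise distinct $\alpha$-traces, so $\tau(\alpha)\geq\mu$. Non-decreasing plus unbounded forces each preimage $\tau^{-1}(\{x\})$ with $x<\kappa$ to be a bounded initial segment of $\kappa$, which is exactly the almost one-to-one property.

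Finally I feed $\tau$ into Proposition \ref{Proposition: Functions properties}(2): there is a monotone $g\colon\kappa\to\kappa$ with $j_W(g)([\tau]_W)\geq[\mathrm{id}]_W$. Since $\tau\leq|\mathcal{A}_\alpha|$ on $Y\in W$ and $g$ is monotone,
\[j_W(g)([\alpha\mapsto|\mathcal{A}_\alpha|]_W)\;\geq\; j_W(g)([\tau]_W)\;\geq\;[\mathrm{id}]_W,\]
so $\alpha\mapsto|\mathcal{A}_\alpha|$ is almost one-to-one mod $W$, contradicting clause (2). The one piece of bookkeeping I expect to be the main obstacle is ensuring $\tau$ and $\alpha\mapsto|\mathcal{A}_\alpha|$ are genuinely functions into $\kappa$ on a $W$-large set so that Proposition \ref{Proposition: Functions properties} applies cleanly; this is forced by the ultrapower reformulation of clause (2) in Lemma \ref{Lemma: Ultrapower rep. of Diamond*}, which makes $|\mathcal{A}_\alpha|<\kappa$ mod $W$, so intersecting $Y$ with $\{\alpha:|\mathcal{A}_\alpha|<\kappa\}\in W$ from the start puts $\tau$ into the correct range and validates all comparisons in $j_W(\kappa)$.
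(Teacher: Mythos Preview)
Your proof is correct and follows the same approach as the paper: define the guessing sets $B_X$, apply Galvin to obtain $\kappa$-many distinct $X$'s with $\bigcap B_X\in W$, and then contradict the thinness of $\mathcal{A}_\alpha$ using the many distinct traces $X\cap\alpha$ that must land in it. The only difference is the packaging of the final contradiction: the paper directly extracts a value $\theta$ with an unbounded fiber $B'\subseteq B$ and a single $\beta^*\in B'$ above which $\theta^+$ of the traces are already distinct, whereas you wrap the trace-counting into the non-decreasing function $\tau$ and push the almost-one-to-one property up to $\alpha\mapsto|\mathcal{A}_\alpha|$ via the monotone $g$ of Proposition~\ref{Proposition: Functions properties}(2).
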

\begin{proof}
    Assume towards  contradiction that $W$ has the Galvin property. Enumerate $\mathcal{A}_\alpha=\{A_{\alpha,i}\mid i<|\mathcal{A}_\alpha|\}$.  For every set $X$, there is $B_X\in W$ such that for every for every $\alpha\in B_X$, $X\cap\alpha\in \mathcal{A}_\alpha$. 
    By our assumption, there are $\kappa$-many distinct sets $\{X_i\mid i<\kappa\}$ such that $B:=\bigcap_{i<\kappa}B_{X_i}\in W$. Note that the key property of $B$ is that for every $i<\kappa$ and for all $\alpha\in B$, $X_i\cap\alpha\in\mathcal{A}_\alpha$. Since the function $\alpha\mapsto|\mathcal{A}_\alpha|$ is not almost one-to-one mod $W$, there  is $\theta<\kappa$ and an unbounded subset $B'\subseteq B$ such that for every $\alpha\in B'$, $|\mathcal{A}_\alpha|=\theta$.  Consider  $\{ X_i\mid i<\theta^+\}$. For every $i\neq j<\theta^+$, find $\alpha_{i,j}<\kappa$ such that $X_i\cap \alpha_{i,j}\neq X_j\cap\alpha_{i,j}$ and take $\alpha^*=\sup_{i,j<\theta^+}\alpha_{i,j}$. By regularity of $\kappa$, $\alpha^*<\kappa$. Since $B'$ is unbounded there exists some $\beta^*\in B'$ with $\beta^*>\alpha^*$. It follows that for every $i<\theta^+$, $X_i\cap\beta^*\in \mathcal{A}_{\beta^*}$, and also for every $i\neq j$, since $\alpha_{i,j}<\beta^*$, $X_i\cap\beta^*\neq X_j\cap\beta^*$. It follows that $i\mapsto X_i\cap \beta^*$ is a one-to-one function from $\theta^+$ into $\mathcal{A}_{\beta^*}$. This contradicts the fact that $\beta^*\in B'$ and thus $|\mathcal{A}_{\beta^*}|=\theta$.
\end{proof}

 \begin{corollary}
Suppose that $\kappa$ is regular and $U$ is an ultrafilter extending the club filter on $\kappa$. Assume that there is a sequence of sets $\l \mathcal{A}_\alpha\r_{ \alpha<\kappa}$ such that:
 \begin{enumerate}
        \item for every $\alpha<\kappa$, $|\mathcal{A}_\alpha|<\alpha$.
        \item for every $X\subseteq \kappa$, $\{\alpha<\kappa\mid X\cap\alpha\in \mathcal{A}_\alpha\}\in U$.
    \end{enumerate}
    Then $\diamondsuit^*_{\text{thin}}(U)$ holds and in particular $U$ is non-Galvin.
\end{corollary}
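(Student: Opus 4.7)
The plan is to verify directly that the given sequence $\langle \mathcal{A}_\alpha\rangle_{\alpha<\kappa}$ is a $\diamondsuit^*_{\textnormal{thin}}(U)$-sequence, and then invoke Lemma~\ref{Lemma: Thin implies non-Galvin}. Condition~(1) of Definition~\ref{Definition: Diamond-Thin} is exactly hypothesis~(2) of the corollary, so no work is required there. The whole content lies in checking condition~(2) of Definition~\ref{Definition: Diamond-Thin}, namely that the function $g(\alpha) = |\mathcal{A}_\alpha|$ is not almost one-to-one mod $U$.

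For this, observe that since $U$ is nonprincipal (being uniform on $\kappa$) we have $\kappa\setminus\{0\}\in U$, and on this set hypothesis~(1) of the corollary tells us that $g$ is regressive. Any $A\in U$ is stationary in $\kappa$: if $C\subseteq\kappa$ were a club with $A\cap C=\emptyset$, then $C\in U$ since $U$ extends the club filter, contradicting $A\in U$. Applying Fodor's lemma to $g\restriction A$ (where $\kappa$ is a regular uncountable cardinal, which is forced by the presence of a nonprincipal ultrafilter extending the club filter) yields some $x<\kappa$ such that $\{\alpha\in A : g(\alpha)=x\}$ is stationary, hence unbounded in~$\kappa$. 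Since $A\in U$ was arbitrary, $g$ is not almost one-to-one mod~$U$.

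This establishes $\diamondsuit^*_{\textnormal{thin}}(U)$, and the non-Galvinness of $U$ then follows immediately from Lemma~\ref{Lemma: Thin implies non-Galvin}. There is no real obstacle here; the only substantive point is the Fodor-style argument ensuring that a regressive function cannot be almost one-to-one on any set of a club-filter-extending ultrafilter. Everything else is bookkeeping against Definition~\ref{Definition: Diamond-Thin}.
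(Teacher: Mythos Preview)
Your proof is correct and follows essentially the same route as the paper: both verify that condition~(2) of Definition~\ref{Definition: Diamond-Thin} holds by noting that every set in $U$ is stationary (since $U$ extends the club filter), and then applying Fodor's lemma to the regressive function $\alpha\mapsto|\mathcal{A}_\alpha|$ to produce an unbounded constant fiber on any $A\in U$. The invocation of Lemma~\ref{Lemma: Thin implies non-Galvin} at the end is likewise what the paper intends.
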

\begin{proof}
It remains to show that $\alpha\mapsto |\mathcal{A}_\alpha|$ is not one-to-one on a set in $U$. If $A\in U$, then $A$ is stationary since $\text{Club}_\kappa\subseteq U$. By F\"{o}dor applied to the function $\alpha\mapsto |\mathcal{A}_\alpha|$ restricted to $A$, there is an unbounded subset $S'\subseteq A$ and $\theta<\kappa$ such that for every  $\alpha\in S'$, $|\mathcal{A}_\alpha|=\theta$. In particular, $\alpha\mapsto |\mathcal{A}_\alpha|$ is not almost one-to-one on $A$.\end{proof}
The most important class of ultrafilters which satisfy $\diamondsuit^*_{\text{thin}}$ are the non $p$-point Dodd sound ultrafilters as will be proven in Lemma \ref{EquivDoddSound}. To prove that lemma, we will need the following characterization due to Goldberg of Dodd sound ultrafilters \cite[Thm. 4.3.26]{GoldbergUA}:
\begin{theorem}\label{Theorem: Characterization of Dodd sound}
    A uniform ultrafilter $U$ on an ordinal $\delta$ is Dodd sound if and only if there is a sequence $\l \mathcal{A}_\alpha\r_{\alpha<\delta}$  such that for any sequence
$\l S_\alpha\subseteq\alpha \r_{\alpha < \delta}$, the following are equivalent:
\begin{enumerate}
    \item [(a)] There is a set $S\subseteq\kappa$ such that for $U$-almost every $\alpha$, $S\cap\alpha=S_\alpha$.
    \item [(b)] For $U$-almost every $\alpha$, $S_\alpha\in \mathcal{A}_\alpha$.
\end{enumerate}
\end{theorem}
\begin{lemma}\label{EquivDoddSound}
    Let $\kappa$ be regular and $U$ a non $p$-point Dodd sound ultrafilter, then $\diamondsuit^*_{\text{thin}}(U)$.  
\end{lemma}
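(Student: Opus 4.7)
The plan is to read off the $\diamondsuit^*_\text{thin}(U)$-sequence directly from the Dodd sound function $j^{[\text{id}]_U}$, using the non-$p$-point witness to bound the cardinality of its range. By the ultrapower formulation in Lemma~\ref{Lemma: Ultrapower rep. of Diamond*}, it suffices to exhibit a set $A \in M_U$ with $\{j_U(S) \cap [\text{id}]_U : S \subseteq \kappa\} \subseteq A$ such that no $g\colon\kappa\to\kappa$ satisfies $j_U(g)(|A|^{M_U}) \geq [\text{id}]_U$. Dodd soundness places $j^{[\text{id}]_U}$ inside $M_U$, so $R := \rng(j^{[\text{id}]_U}) \in M_U$, and I will simply take $A := R$; the covering condition is then automatic, and the problem reduces to controlling $|R|^{M_U}$.

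\textbf{A truncation bound via non-$p$-pointness.} Since $U$ is not a $p$-point, the remark after Definition~\ref{Definition: special properties} forces $U$ to be uniform on $\kappa$, and Proposition~\ref{Proposition: Functions properties} supplies $f\colon\kappa \to \kappa$ that is unbounded mod $U$ but not almost one-to-one mod $U$. Set $\eta := [f]_U$. Unboundedness gives $j_U(\beta) < \eta$ for every $\beta < \kappa$, while $\eta < [\text{id}]_U$: if $\{\alpha : f(\alpha)\geq\alpha\}\in U$ then $g=\text{id}$ would witness almost-1-1. The crux is the $M_U$-definable map $\Phi\colon R \to P(\eta)^{M_U}$ given by $r \mapsto r \cap \eta$, which I claim is injective. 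Indeed, if $X\neq Y$ are subsets of $\kappa$, fix $\beta \in X \triangle Y$; then $\beta<\kappa$ gives $j_U(\beta)<\eta$, while elementarity gives $j_U(\beta) \in j_U(X) \triangle j_U(Y)$, so $j_U(X) \cap \eta \neq j_U(Y) \cap \eta$. Working inside $M_U$, we obtain $|R|^{M_U} \leq |P(\eta)|^{M_U} = (2^\eta)^{M_U}$.

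\textbf{Closing the argument.} It now suffices to verify that no monotone $g\colon\kappa\to\kappa$ satisfies $j_U(g)((2^\eta)^{M_U}) \geq [\text{id}]_U$; since $|R|^{M_U} \leq (2^\eta)^{M_U}$ in $M_U$, monotonicity of $j_U(g)$ then transfers this property to $|R|^{M_U}$. Suppose toward contradiction that such a monotone $g$ exists. By elementarity, $\{\alpha : g(2^{f(\alpha)}) \geq \alpha\} \in U$, so setting $h(x) := g(2^x)$ gives a monotone function with $h\circ f \geq^* \text{id}$, contradicting the characterisation of almost-1-1 in Proposition~\ref{Proposition: Functions properties}(2) applied to $f$. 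The main obstacle I expect is the cardinality bound $|R|^{M_U} \leq (2^\eta)^{M_U}$, which hinges on recognising that the non-$p$-point witness $f$ supplies a truncation level $\eta$ sitting above $j_U\restriction\kappa$ but strictly below $[\text{id}]_U$; once $\Phi$ is identified, the exponential-composition trick $h(x)=g(2^x)$ routinely finishes the argument.
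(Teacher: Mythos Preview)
Your proposal is correct and follows essentially the same approach as the paper's proof: both take $A$ to be the range $R$ of $j^{[\text{id}]_U}$, which lies in $M_U$ by Dodd soundness, and both use the non-$p$-point hypothesis to show that $|R|^{M_U}$ is not in the top sky. The paper computes $|R|^{M_U} = (2^\kappa)^{M_U}$ directly from the bijection $j^{[\text{id}]_U}\in M_U$ and then argues via a function representing $\kappa$, whereas you route through the weaker bound $|R|^{M_U}\leq(2^\eta)^{M_U}$ and the specific non-$p$-point witness $f$; these are cosmetic variations on the same idea.
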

\begin{proof}
    Assume that $U$ is a non $p$-point Dodd sound ultrafilter. Let $\l\mathcal{A}_\alpha\r_{\alpha<\kappa}$ be the sequence obtained by Theorem \ref{Theorem: Characterization of Dodd sound}.
    Note that for every $S\subseteq \kappa$, the sequence $\l S\cap\alpha\r_{\alpha<\kappa}$ satisfies condition $(a)$ of Theorem \ref{Theorem: Characterization of Dodd sound}. 
    By the theorem, we conclude that for $U$-almost every $\alpha$, $S\cap\alpha\in \mathcal{A}_\alpha$. It follows that $j_U(S)\cap [id]_U\in [\alpha\mapsto \mathcal{A}_\alpha]$ and $\{j_U(S)\cap [id]_U\mid S\subseteq\kappa\}\subseteq [\alpha\mapsto \mathcal{A}_\alpha]_U$. 
    Similarly, from the implication  $(b)$ to $(a)$ we deduce that $[\alpha\mapsto \mathcal{A}_\alpha]_U\subseteq\{j_U(S)\cap[\text{id}]_U\mid S\subseteq \kappa\}$. By Dodd soundness, the function $j^{[\text{id}]_U}\colon P(\kappa)\rightarrow \{j_U (S)\cap[\text{id}]_U\mid S\subseteq\kappa\}$ defined by $j^{[\text{id}]_U}(S)=j(S)\cap [\text{id}]_U$ belongs to $M_U$.  Thus $M_U\models |[\alpha\mapsto \mathcal{A}_\alpha]_U|=2^\kappa$. Finally, $\alpha\mapsto |\mathcal{A}_\alpha|$ cannot be an almost one-to-one function mod $U$: otherwise, the class of any unbounded function $\kappa\leq [\pi]_U$ would also be an almost one-to-one mod $U$. To see this, suppose that $[\tau]_U=\kappa$, then $[\alpha\mapsto 2^{\tau(\alpha)}]_U=2^\kappa$ and by our assumption, this is represented by an almost one-to-one function mod $U$ \footnote{Being an almost one-to-one function mod $U$ is clearly a property of an equivalence class mod $U$.}. Let $X\in U$ be the set witnessing that $\alpha\mapsto 2^{\tau(\alpha)}$ is almost one-to-one mod $U$. Also we let $Y\in U$ be such that for every $\alpha\in Y$, $\tau(\alpha)\leq\pi(\alpha)$. We claim that $\pi\restriction X\cap Y$ is almost one-to-one as for any $\gamma<\kappa$, $$\{\alpha<\kappa\mid \pi(\alpha)<\gamma\}\cap X\cap Y\subseteq\{\alpha<\kappa\mid \tau(\alpha)<\gamma\}\cap X\cap Y\subseteq$$
    $$\subseteq\{\alpha<\kappa\mid 2^{\tau(\alpha)}\leq 2^\gamma\}\cap X\cap Y.$$ The right most set is bounded by the choice of $X$. We conclude that $U$ is a $p$-point contradiction.
\end{proof}
Note that an ultrafilter $U$ satisfying $\diamondsuit^*_{\text{thin}}(U)$ need not be Dodd sound since by Lemma \ref{Lemma: Ultrapower rep. of Diamond*} we only cover the set $\{j_U(S)\cap[id]_U\mid S\subseteq \kappa\}$. However, at least for $\kappa$-complete Dodd sound ultrafilters, the second requirement of $\diamondsuit^*_{\text{thin}}(U)$ regarding the function $\alpha\mapsto |\mathcal{A}_\alpha|$ is equivalent to $U$ not being a $p$-point.
\begin{proposition}\label{Proposition: diamond if and only if Dodd sound}
     Let $\kappa$ be measurable and $U$ be a  $\kappa$-complete Dodd sound ultrafilter over $\kappa$, and let $[\alpha\mapsto \mathcal{A}_\alpha]_U=\{j_U(S)\cap [\text{id}]_U\mid S\subseteq \kappa\}$.  Then $U$ is a non $p$-point ultrafilter if and only if the function $\alpha\mapsto |\mathcal{A}_\alpha|$ is not almost one-to-one mod $U$.
\end{proposition}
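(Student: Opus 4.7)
The plan is to prove both implications by contrapositive: first the short reverse direction, then the forward direction via a small ultrapower computation.

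For the ($\Leftarrow$) direction, I would assume $U$ is a $p$-point and show that $f := \alpha \mapsto |\mathcal{A}_\alpha|$ is almost one-to-one mod $U$. Since every unbounded function mod a $p$-point is almost one-to-one, it suffices to show $f$ is unbounded mod $U$. By Proposition \ref{Proposition: Functions properties}(1), unboundedness is equivalent to $[f]_U \geq \sup_{\alpha<\kappa} j_U(\alpha) = \kappa$. As in the proof of Lemma \ref{EquivDoddSound}, Dodd soundness gives $[f]_U = (2^\kappa)^{M_U}$, and $\kappa$-completeness of $U$ gives $P(\kappa)^V \subseteq M_U$ (since $X = j_U(X) \cap \kappa \in M_U$ for each $X\subseteq \kappa$), whence $(2^\kappa)^{M_U} \geq (2^\kappa)^V > \kappa$.

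For the ($\Rightarrow$) direction, I would assume $f$ is almost one-to-one mod $U$ and derive that the minimal unbounded function $\pi$ (with $[\pi]_U = \kappa$) is also almost one-to-one mod $U$; for a $\kappa$-complete ultrafilter on $\kappa$ the latter is equivalent to $U$ being a $p$-point. By Proposition \ref{Proposition: Functions properties}(2), pick a monotone $g\colon \kappa \to \kappa$ with $j_U(g)((2^\kappa)^{M_U}) \geq [\text{id}]_U$. Since $\kappa$ is measurable and hence strong limit, the function $F(\alpha) := (2^\alpha)^V$ maps $\kappa$ into $\kappa$; set $g' := g \circ F$, a monotone function $\kappa \to \kappa$. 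By elementarity of $j_U$ applied to the definition of $F$, we get $j_U(F)(\kappa) = (2^\kappa)^{M_U}$, so
\[
[g' \circ \pi]_U = j_U(g')(\kappa) = j_U(g)(j_U(F)(\kappa)) = j_U(g)((2^\kappa)^{M_U}) \geq [\text{id}]_U.
\]
Applying Proposition \ref{Proposition: Functions properties}(2) to $\pi$, this shows $\pi$ is almost one-to-one mod $U$, hence $U$ is a $p$-point.

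The main conceptual step is the choice of the bridging function $F$ in the forward direction: the ordinal $(2^\kappa)^{M_U} = [f]_U$ is precisely the value of $j_U(F)$ at the critical point $\kappa = [\pi]_U$, so composing the witness $g$ for the almost one-to-oneness of $f$ with $F$ produces the corresponding witness $g \circ F$ for $\pi$. Strong limitness of $\kappa$ is used to keep $F$ bounded in $\kappa$ so that $g'$ is a genuine function $\kappa \to \kappa$; without this, one would lose the ability to apply Proposition \ref{Proposition: Functions properties}(2).
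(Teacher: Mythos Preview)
Your proposal is correct, and the overall strategy matches the paper's: both directions hinge on the fact that $\alpha\mapsto|\mathcal A_\alpha|$ is unbounded mod $U$ (with $[\alpha\mapsto|\mathcal A_\alpha|]_U=(2^\kappa)^{M_U}$), after which the $p$-point definition does the work. Your ($\Rightarrow$) argument is exactly the content of the paper's citation of Lemma~\ref{EquivDoddSound}, just made explicit via the bridging function $F(\alpha)=2^\alpha$.

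The one genuine difference is in how unboundedness is established in the ($\Leftarrow$) direction. The paper argues combinatorially: if $|\mathcal A_\alpha|\leq\theta$ on a set in $U$, pick $\theta^+$ many subsets of $\kappa$ that are pairwise distinguished below some $\gamma<\kappa$, use $\kappa$-completeness to find $\gamma^*>\gamma$ in all the relevant measure-one sets, and pigeonhole $\theta^+$ distinct restrictions into $\mathcal A_{\gamma^*}$ of size $\leq\theta$. You instead compute directly in the ultrapower: $(2^\kappa)^{M_U}>\kappa$ (Cantor in $M_U$, or via $P(\kappa)^V\subseteq M_U$), so $[\alpha\mapsto|\mathcal A_\alpha|]_U>\kappa=\sup j_U[\kappa]$. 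Your route is shorter and avoids the pigeonhole argument entirely; the paper's route has the mild advantage of not invoking the Dodd-sound identification $[\alpha\mapsto|\mathcal A_\alpha|]_U=(2^\kappa)^{M_U}$ a second time, but this is a matter of taste.
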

\begin{proof} One direction follows from the previous lemma. Let us prove the other, note that $\alpha\mapsto|\mathcal{A}_\alpha|$ cannot be bounded on a set in $U$, just otherwise, suppose that $\theta<\kappa$ is such that $B^*:=\{\alpha<\kappa\mid |\mathcal{A}_\alpha|\leq\theta\}\in U$. Take any $\theta^+$-many sets $\{ X_i\mid i<\theta^+\}$ such that there is $\gamma<\kappa$ such that for all $i\neq j<\theta^+$, $X_i\cap\gamma\neq X_j\cap\gamma$. For each $i<\theta^+$, Denote by $B_i:=\{\alpha<\kappa\mid X_i\cap\alpha\in\mathcal{A}_\alpha\}\in U$. By $\kappa$-completeness and fineness, there is $\gamma^*\in B^*\cap(\bigcap_{i<\theta^+}B_i)\setminus\gamma$. It follows that $|\mathcal{A}_{\gamma^*}|=\theta$ but also for each $i<\theta^+$, $X_i\cap \gamma^*\in \mathcal{A}_{\gamma^*}$ are all distinct sets. Contradiction. We conclude that $\alpha\mapsto |\mathcal{A}_\alpha|$ is an unbounded function mod $U$ which is also not almost one-to-one according to $(1)$. Hence $U$ is not a $p$-point.
\end{proof}
We cannot drop the $\kappa$-completeness assumption here:
\begin{example}
    Suppose that $W$ is a fine normal ultrafilter over $P_\kappa(\lambda)$ for $\kappa<\lambda$ where $\lambda$ is a regular cardinal. By \cite[Theorems 4.4.37 \& 4.4.25]{GoldbergUA}, there is a Dodd sound non uniform ultrafilter $U$ on $\lambda$ (and therefore $p$-point) which is Rudin-Keisler equivalent to  $W$. Note that there is no function which is unbounded (and therefore no function which is almost one-to-one) mod $U$. In particular, $\alpha\mapsto |\mathcal{A}_\alpha|$ is not almost one-to-one mod $U$. Also, note that $U$ satisfies $\diamondsuit^*_{\text{thin}}(U)$ and therefore is an example of  a non-Galvin ultrafilter  over $\lambda$ which is uniform and not $\lambda$-complete.  
\end{example}

\begin{corollary}\label{Corollary: nonppoint+DoddSound implies non-Galvin}
    If $U$ is a non $p$-point, Dodd sound ultrafilter over a regular cardinal $\kappa$, then $U$ is non-Galvin.
\end{corollary}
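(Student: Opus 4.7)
The plan is extremely short: this corollary is an immediate concatenation of the two preceding lemmas. Given a non $p$-point, Dodd sound ultrafilter $U$ on a regular cardinal $\kappa$, I would first invoke Lemma \ref{EquivDoddSound} to obtain $\diamondsuit^*_{\text{thin}}(U)$. Then I would apply Lemma \ref{Lemma: Thin implies non-Galvin} to conclude that $U$ is non-Galvin.

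There is no genuine obstacle here beyond verifying that the hypotheses line up: Lemma \ref{EquivDoddSound} requires exactly that $\kappa$ be regular and $U$ be a non $p$-point Dodd sound ultrafilter, which are our standing assumptions; and Lemma \ref{Lemma: Thin implies non-Galvin} takes $\diamondsuit^*_{\text{thin}}(U)$ as its sole hypothesis. Hence no additional machinery is needed. In writing it out I would present it as a one-line deduction, perhaps stating explicitly the witnessing sequence $\langle \mathcal{A}_\alpha\rangle_{\alpha<\kappa}$ produced from Dodd soundness via the ultrapower characterization in Lemma \ref{Lemma: Ultrapower rep. of Diamond*}, namely $[\alpha\mapsto \mathcal{A}_\alpha]_U = \{j_U(S)\cap[\mathrm{id}]_U \mid S\subseteq\kappa\}$, so that the reader can see concretely where the $\diamondsuit^*_{\text{thin}}$-sequence comes from. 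The cardinality condition $(2)$ in Definition \ref{Definition: Diamond-Thin} is exactly what the non $p$-point hypothesis guarantees, as established in the proof of Lemma \ref{EquivDoddSound}.
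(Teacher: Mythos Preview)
Your proposal is correct and matches the paper's intended approach exactly: the corollary is stated without proof in the paper precisely because it is the immediate concatenation of Lemma~\ref{EquivDoddSound} (non $p$-point Dodd sound implies $\diamondsuit^*_{\text{thin}}(U)$) and Lemma~\ref{Lemma: Thin implies non-Galvin} ($\diamondsuit^*_{\text{thin}}(U)$ implies non-Galvin). No further elaboration is needed.
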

In attempt to pinpoint the exact guessing principle that catches non-Galvinness, we note that the usage of $\diamondsuit^*_{\text{thin}}(W)$ in the argument of Lemma \ref{Lemma: Thin implies non-Galvin} can be replaced with the following weakening:
\begin{definition}\label{Definition: W-Kurepa} Let $\kappa\leq\lambda\leq 2^\kappa$. An ultrafilter \(W\) 
on a regular cardinal \(\kappa\) satisfies
\(\diamondsuit^*_\text{par}(W,\lambda)\)
if there is a sequence
of sets \(\langle X_\alpha\rangle_{\alpha<\lambda}\), $A\in M_W$ such that:
\begin{enumerate}
    \item $\{j_U(X_\alpha)\cap [id]_U\mid \alpha<\lambda\}\subseteq A$.
\item For any function $f:\kappa\rightarrow\kappa$, $j_U(f)(|A|^{M_W})<[id]_W$.

\end{enumerate}
\end{definition}
Clearly, $\diamondsuit^*_{\text{thin}}(W)$ implies $\diamondsuit^*_{\text{par}}(W,2^\kappa)$ which in turn imply $\diamondsuit^*_{\text{par}}(W,\lambda)$ for any $\lambda\in [\kappa,2^\kappa]$.
\begin{proposition}
    $\diamondsuit^*_{\text{par}}(W,\lambda)$ implies that $\neg \text{Gal}(W,\kappa,\lambda)$
\end{proposition}
\begin{proof}
    The argument of Lemma \ref{Lemma: Thin implies non-Galvin}  gives this stronger result.
\end{proof}

The principle $\diamondsuit^*_{\text{par}}(W,\lambda)$ is equivalent to the existence of a set $K\subseteq P(\kappa)$ of size $\lambda$ and a sequence $\l A_\alpha\r_{\alpha<\kappa}$ such that:
\begin{enumerate}
    \item For every $X\in K$, $\{\alpha<\kappa\mid X\cap\alpha\in A_\alpha\}\in W$.
    \item  The function $\alpha\mapsto |A_\alpha|$ is not almost one-to-one mod $W$.
\end{enumerate}
The referee pointed out to us the strong similarity of  $\diamondsuit^*_{\text{par}}$ to the notion of pseudo-Kurepa families due to Todorcevic \cite{todorcevic1991}. Indeed, many of the initial segments of the sets in $K$ must be equal in order for the sets $A_\alpha$ of asymptotically bounded cardinality to exist.  



Next, we would like to provide two closure properties of the class of ultrafilters satisfying $\diamondsuit^*_{\text{thin}}$.
\begin{lemma}
    Suppose $U$ is an ultrafilter on \(\kappa\) and
    $Z$ is the $U$-limit of a discrete
    sequence of
     ultrafilters
    $W_\xi$
    on \(\kappa\) such that 
    \(\diamondsuit^*_\textnormal{thin}(W_\xi)\).
    Then 
    \(\diamondsuit^*_\textnormal{thin}(Z)\).
    
\end{lemma}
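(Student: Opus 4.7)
The plan is to build a $\diamondsuit^*_\text{thin}(Z)$-sequence by gluing the $\diamondsuit^*_\text{thin}(W_\xi)$-sequences together along the discrete witness. First, let $\langle A_\xi\rangle_{\xi<\kappa}$ be a pairwise disjoint sequence with $A_\xi\in W_\xi$ (after replacing the given sequence by a discrete sub-sequence on a $U$-large set if necessary, which does not change $Z$ by the fact about discrete sums and limits recorded in the paper). For each $\xi<\kappa$, fix a $\diamondsuit^*_\text{thin}(W_\xi)$-sequence $\langle \mathcal{A}^\xi_\alpha\rangle_{\alpha<\kappa}$. Since the $A_\xi$ are pairwise disjoint, for each $\alpha<\kappa$ there is at most one $\xi$ with $\alpha\in A_\xi$, so I can unambiguously define
$$\mathcal B_\alpha = \begin{cases}\mathcal A^\xi_\alpha & \text{if } \alpha\in A_\xi,\\ \emptyset & \text{if } \alpha\notin\bigcup_{\xi<\kappa}A_\xi.\end{cases}$$

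To verify condition (1), fix any $X\subseteq\kappa$. For each $\xi<\kappa$, the set $B_\xi:=\{\alpha<\kappa : X\cap\alpha\in\mathcal A^\xi_\alpha\}$ lies in $W_\xi$, and so does $B_\xi\cap A_\xi$. By construction, for any $\alpha\in B_\xi\cap A_\xi$ we have $\mathcal B_\alpha=\mathcal A^\xi_\alpha$ and hence $X\cap\alpha\in\mathcal B_\alpha$. Therefore $\{\alpha<\kappa : X\cap\alpha\in\mathcal B_\alpha\}$ contains $B_\xi\cap A_\xi$ and thus belongs to $W_\xi$ for every $\xi<\kappa$, which places it in $Z$.

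For condition (2), I argue by contradiction. Suppose $\alpha\mapsto|\mathcal B_\alpha|$ is almost one-to-one mod $Z$, witnessed by some $B\in Z$. Then $\{\xi<\kappa : B\in W_\xi\}\in U$, so in particular there exists some $\xi$ with $B\in W_\xi$. Fix such a $\xi$; then $B\cap A_\xi\in W_\xi$, and for every $\alpha\in B\cap A_\xi$ we have $|\mathcal B_\alpha|=|\mathcal A^\xi_\alpha|$. Since almost one-to-one is inherited by restriction to subsets, $\alpha\mapsto|\mathcal A^\xi_\alpha|$ is almost one-to-one mod $W_\xi$ on the set $B\cap A_\xi\in W_\xi$, directly contradicting $\diamondsuit^*_\text{thin}(W_\xi)$.

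The only real subtlety is ensuring that the ``fiber'' $\mathcal A^\xi_\alpha$ used at $\alpha$ is well-defined; this is handled by the discreteness of $\langle W_\xi\rangle_{\xi<\kappa}$, which lets the $A_\xi$ cleanly separate the construction. The rest is routine bookkeeping: condition (1) is a direct combination of the individual guessing properties, and condition (2) is a quick restriction argument using the defining property of $Z$ as a $U$-limit.
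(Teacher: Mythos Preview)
Your proof is correct and follows essentially the same approach as the paper's: glue the individual $\diamondsuit^*_{\text{thin}}(W_\xi)$-sequences along the discrete witness, then verify (1) by intersecting with $A_\xi$ and (2) by restricting a putative almost one-to-one witness to some $A_\xi$. The only cosmetic difference is that the paper extends the disjoint family to a full partition $\langle S_\xi\rangle_{\xi<\kappa}$ of $\kappa$, whereas you leave $\mathcal B_\alpha=\emptyset$ on the (irrelevant) complement.
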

    \begin{proof}
        Fix a partition of \(\kappa\)
        into sets \(S_\xi\in W_\xi\).
        For each \(\xi < \kappa\),
        let \(\langle\mathcal A^\xi_\alpha\rangle_{\alpha < \kappa}\)
        witness that 
        \(\diamondsuit^*_\textnormal{thin}(W_\xi)\). Then 
        let 
        \(\mathcal A_\alpha = 
        \mathcal A^\xi_\alpha\)
        where \(\xi < \kappa\)
        is unique such that \(\alpha\in S_\xi\). Fixing $A\subseteq \kappa$, we would like to show that $B:=\{\alpha<\kappa\mid A\cap\alpha\in \mathcal{A}_{\alpha}\}\in U\text{-}\lim \l W_\xi\r_{\xi<\kappa}$. For any $\xi<\kappa$, then $B_\xi:=\{\alpha\in S_\xi\mid A\cap\alpha\in \mathcal{A}^\xi_\alpha\}\in W_\xi$. Since for each $\alpha\in S_\xi$, $\mathcal{A}_\alpha=\mathcal{A}^\xi_\alpha$, we conclude that $B_\xi\subseteq B$ and therefore $B\in W_\xi$. It follows that $B\in U\text{-}\lim \l W_\xi\r_{\xi<\kappa}$. It remains to show that $c(\alpha)=|\mathcal{A}_\alpha|$ is not almost one-to-one on any set $B\in W$. Suppose otherwise, and let $B\in W$ witness that $c$ is almost one-to-one.  Pick any $\xi<\kappa$ such that $B\in W_\xi$ to reach a contradiction note that $B\cap S_\xi\in W_\xi$, and the function $c$ is almost one-to-one on this set. However, for every $\alpha\in B\cap S_\xi$, $\mathcal{A}^\xi_\alpha=\mathcal{A}_\alpha=c(\alpha)$ and so $\alpha\mapsto |\mathcal{A}^\xi_\alpha|$ is almost one-to-one on $B\cap S_\xi$, contradicting $\diamondsuit^*_{\text{thin}}(W_\xi)$.  
    \end{proof}

\begin{lemma}\label{Lemma: SumofDiamond}
    Suppose $U$ is an $n$-fold sum of $p$-points on \(\kappa\) and
    $\langle W_\xi\rangle_{\xi < \kappa}$ is a sequence of (not necessarily discrete)
    \(\kappa\)-complete ultrafilters 
    on \(\kappa\) such that 
    \(\diamondsuit^*_\textnormal{thin}(W_\xi)\).
    Then letting
    $Z = U\text{-}\lim\l W_\xi\r_{\xi < \kappa}$, we have \(\diamondsuit^*_\textnormal{thin}(Z)\).
    \begin{proof}
    We first consider the case that
    $U$ is a $p$-point. Then replace \(U\) with \(U_W = D(j_U, W)\)
    where $W$ is the point in $M_U$ represented by 
    $\xi\mapsto W_\xi$. Note that $U_W$ is Rudin-Keisler below an ultrafilter on $\kappa$ which implies that $U_W$ concentrates on a set of ($\kappa$-complete) ultrafilters of size $\kappa$. By enumerating those ultrafilters  $W'_\xi$ for $\xi<\kappa$, we can shift  $U_W$ to an ultrafilter $U'$ on $\kappa$ such that $[\text{id}]_{U_W}$ is identified with $[\xi\mapsto W'_\xi]_{U'}$. 
    Also, note that $U'-\text{lim} W'_\xi=U-\text{lim} W_\xi$ since the factor map   
    $k\colon M_{U'}\rightarrow M_U$ sends $k([\xi\mapsto W'_\xi]_{U'})=W$ and thus $$X\in U'-\lim\l W'_\xi\r_{\xi<\kappa}\Leftrightarrow j_{U'}(X)\in [\xi\mapsto W'_\xi]_{U'}\Leftrightarrow $$
    $$\Leftrightarrow j_U(X)=k(j_{U'}(X))\in W\Leftrightarrow X\in U-\lim \l W_\xi\r_{\xi<\kappa}.$$ 
    
    Since \(U'\leq_{RK}U\), and $U$ is a $p$-point,  $U'$ is also a $p$-point (see \cite[Cor 2.8]{Kanamori1976UltrafiltersOA}). The sequence $\l W'_\xi\r_{ \xi<\kappa}$  represents the identity in $U'$, it is one-to-one mod $U'$, since all the $W'_\xi$'s are $\kappa$-complete, by Proposition \ref{descretep-ppoints} the sequence is  discrete on a set in $U'$.\footnote{Note that even if the $W_\xi$'s we started with were not distinct, the $W_\xi'$'s will be distinct on a set in $U'$. For example, if $W_\xi=W_0$ for every $\xi$, then $U_W$ is the principle ultrafilter concentrating on $\{W_0\}$ and thus $U'$ is principle and $W_0=W_\xi'$. It is still true that on a measure one set in $U'$, i.e. $\{0\}$, the sequence $\l W'_\xi\r_{\xi<\kappa}$ is distinct. In this case, the lemma is trivial as $Z=W_0$.} This allows us to apply the previous lemma,
    obtaining thin diamond for 
    $U'\text{-}\lim \l W'_\xi\r_{\xi<\kappa} = U\text{-}\lim\l W_\xi\r_{\xi < \kappa}$.
    
    Now suppose the lemma is true for
    $n$-fold sums of $p$-points,
    and we will prove it when $U$ is
    an $n+1$-fold sum. 
    We can fix a $p$-point $D$
    such that $U$ is the 
    $D$-limit of a sequence of $n$-fold sum $p$-points $U_\xi$ on $\kappa$. As in the previous paragraph, since $D$ is a $p$-point, we may assume that the $U_\xi$'s are discrete.
    Let $U^* = [\xi\mapsto U_\xi]_D$, then by elementarity, $M_D\models$ $U^*$ is an $n$-fold sum of $p$-points. Applying the induction hypothesis in
    $M_D$ to $U^*$ and the ultrafilters $j_D(\langle W_\xi\rangle_{\xi < \kappa})=\l Z^*_\xi\r_{ \xi<j_D(\kappa)}$, we conclude that $Z^*=U^*\text{-}\lim\l Z^*_\xi\r_{\xi<j_D(\kappa)}$ satisfies $\diamondsuit^*_{\text{thin}}(Z^*)$. Let $[\alpha\mapsto Z_\alpha]_D=Z^*$ and assume without loss of generality that for every $\alpha<\kappa$, $\diamondsuit^*_{\text{thin}}(Z_\alpha)$ holds. We claim that
    \[(*) \ \ Z= D\text{-}\lim\l Z_\alpha\r_{\alpha < \kappa} = U\text{-}\lim\l W_\xi\r_{\xi < \kappa}\]
    from which it follows that $\diamondsuit^*_\text{thin}(Z)$,
    by the argument of the previous paragraph. To see $(*)$, since we assumed that the $U_\alpha$'s are discrete, by the theory of sums and limits of ultrapower $$j_{\sum_D\l U_\alpha\r_{\alpha<\kappa}}=j_{D\text{-}\lim\l U_\alpha\r_{\alpha<\kappa}}=j_{U^*}\circ j_D\text{ and }[\text{id}]_{D\text{-}\lim\l U_\alpha\r_{\alpha<\kappa}}=[\text{id}]_{U^*},$$ hence $$X\in D\text{-}\lim\l Z_\alpha\r_{\alpha<\kappa}\Leftrightarrow j_D(X)\in Z^*=U^*\text{-}\lim\l Z^*_\xi\r_{\xi<j_D(\kappa)}\Leftrightarrow$$
    $$\Leftrightarrow j_{U^*}(j_D(X))\in j_{U^*}(j_D(\l W_\xi\r_{\xi<\kappa}))([\text{id}]_{U^*})\Leftrightarrow$$
    $$\Leftrightarrow j_{D\text{-}\lim\l U_\alpha\r_{\alpha<\kappa}}(X)\in j_{D\text{-}\lim\l U_\alpha\r_{\alpha<\kappa}}(\l W_\xi\r_{\xi<\kappa})([\text{id}]_{D\text{-}\lim\l U_\alpha\r_{\alpha<\kappa}})\Leftrightarrow$$
    $$\Leftrightarrow X\in (D\text{-}\lim\l U_\alpha\r_{\alpha<\kappa})\text{-}\lim\l W_\xi\r_{\xi<\kappa}\Leftrightarrow X\in U\text{-}\lim\l W_\xi\r_{\xi<\kappa}.$$
    
    \end{proof}
\end{lemma}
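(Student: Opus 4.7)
The plan is to induct on $n$, using the previous lemma (which handles the discrete case) as the engine, so the work lies entirely in reducing the non-discrete setting to the discrete one.

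For the base case $n=1$, where $U$ is a $p$-point: set $W = [\xi\mapsto W_\xi]_U \in M_U$ and form the pullback $U_W = D(j_U, W)$, which is Rudin-Keisler below $U$ and hence itself a $p$-point by \cite[Cor 2.8]{Kanamori1976UltrafiltersOA}. Since $U_W$ concentrates on a set of $\kappa$-complete ultrafilters of cardinality at most $\kappa$, I can transport it to an ultrafilter $U'$ on $\kappa$ whose identity enumerates a sequence $\l W'_\xi\r_{\xi<\kappa}$ of $\kappa$-complete ultrafilters, with the key identity $U'\text{-}\lim\l W'_\xi\r_{\xi<\kappa} = U\text{-}\lim\l W_\xi\r_{\xi<\kappa}$ (verified via the factor map $k\colon M_{U'} \to M_U$ sending $[\text{id}]_{U'}$ to $W$). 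The $W'_\xi$ are distinct mod $U'$, apart from the trivial case where all $W_\xi$ coincide (in which $U_W$ is principal and $Z=W_0$, making the conclusion immediate), so Proposition \ref{descretep-ppoints} yields discreteness and the previous lemma applies.

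For the inductive step with $U$ an $(n{+}1)$-fold sum: write $U = D\text{-}\lim\l U_\alpha\r_{\alpha<\kappa}$ with $D$ a $p$-point and each $U_\alpha$ an $n$-fold sum of $p$-points; by the reduction used in the base case I may assume the $U_\alpha$ are discrete mod $D$. Apply the induction hypothesis inside $M_D$ to $U^* = [\alpha\mapsto U_\alpha]_D$ and the sequence $j_D(\l W_\xi\r_{\xi<\kappa})$, obtaining a single ultrafilter $Z^* \in M_D$ satisfying $\diamondsuit^*_\text{thin}(Z^*)$ internally. Writing $Z^* = [\alpha\mapsto Z_\alpha]_D$, I then have $\diamondsuit^*_\text{thin}(Z_\alpha)$ for $D$-a.e.\ $\alpha$, so the base case applied to $D$ and $\l Z_\alpha\r_{\alpha<\kappa}$ gives $\diamondsuit^*_\text{thin}(D\text{-}\lim\l Z_\alpha\r_{\alpha<\kappa})$.

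The main obstacle I expect is verifying the identity $D\text{-}\lim\l Z_\alpha\r_{\alpha<\kappa} = U\text{-}\lim\l W_\xi\r_{\xi<\kappa} = Z$, which is what closes the induction. This hinges on the associativity $j_U = j_{U^*}\circ j_D$ (valid precisely because the $U_\alpha$'s are discrete mod $D$, so the sum collapses to the limit) together with careful chasing of representatives: a set $X$ belongs to the $D$-limit iff $j_D(X)\in Z^*$, iff $j_{U^*}(j_D(X))$ lies in the image of $j_{U^*}(j_D(\l W_\xi\r))$ at $[\text{id}]_{U^*}$, iff $j_U(X) \in j_U(\l W_\xi\r)([\text{id}]_U)$, which is membership in $Z$. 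Once this identification is in hand the induction closes and the lemma follows.
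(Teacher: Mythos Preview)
Your proposal is correct and follows essentially the same route as the paper: the same derived-ultrafilter reduction $U_W = D(j_U,W)$ to obtain a discrete sequence in the base case, and the same inductive step applying the hypothesis inside $M_D$ to $U^*$ and $j_D(\langle W_\xi\rangle)$, followed by the same associativity computation to identify $D\text{-}\lim\langle Z_\alpha\rangle$ with $Z$. The only difference is cosmetic---you write $[\text{id}]_U$ where the paper writes $[\text{id}]_{D\text{-}\lim\langle U_\alpha\rangle}$, but since $U = D\text{-}\lim\langle U_\alpha\rangle$ these are the same.
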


\section{Partial Dodd soundness and skies}
A finer analysis of the diamond-like principles of the previous section reveals that partial soundness suffices for an ultrafilter to be non-Galvin. To better understand this improvement, let us prove the following theorem in terms of general elementary embeddings.
\begin{theorem}\label{Theorem: Partial sound}
    Suppose that $j\colon V\rightarrow M$ is an elementary embedding with $\crit(j)=\kappa$ such that $\lambda=\sup\{j(f)(\kappa)\mid f\colon \kappa\rightarrow\kappa\}$ and $\{j(A)\cap \lambda\mid A\subseteq\kappa\}\in M$. Then there is $\xi$ such that $D:=D(j,\xi)$ and $\neg \Gal{D,\kappa,2^\kappa}$.
\end{theorem}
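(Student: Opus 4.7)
My approach is to generalize Lemma \ref{EquivDoddSound} (which handles the case $j = j_U$ for a Dodd sound non $p$-point ultrafilter $U$) to an arbitrary elementary embedding $j$ with the stated hypotheses. The first observation is that the hypothesis $A := \{j(X) \cap \lambda : X \subseteq \kappa\} \in M$ is equivalent to the ostensibly stronger statement that the function $j^\lambda\colon P(\kappa) \to M$, $X \mapsto j(X) \cap \lambda$, is itself in $M$. Indeed, since $j \restriction \kappa = \mathrm{id}$, for any $X \subseteq \kappa$ we have $(j(X) \cap \lambda) \cap \kappa = X$, so the $M$-definable ``decoding'' map $a \mapsto a \cap \kappa$ on $A$ is a two-sided inverse of $j^\lambda$; hence $j^\lambda$, being the inverse of an $M$-definable map, also lies in $M$. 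In particular this bijection gives $|A|^M = (2^\kappa)^M$.

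Next, I choose an ordinal $\xi \in \mathrm{Ord}^M$ with two properties: (a) $\xi$ codes the pair $(A, \lambda)$ in $M$, i.e., there exist $g, h \in V$ such that $j(g)(\xi) = A$ and $j(h)(\xi) = \lambda$ (take $\xi$ to be an ordinal coding the pair via an $M$-definable bijection, with $g, h$ the corresponding decoding functions); and (b) $\xi > j(p)((2^\kappa)^M)$ for every monotone $p \in V$, so that $\xi$ sits above the $M$-sky of $(2^\kappa)^M$. The existence of such $\xi$ rests on the top-sky hypothesis on $\lambda$, which pins $(2^\kappa)^M$ within the sky of $\kappa$ (so $(2^\kappa)^M \leq \lambda$) and keeps its own sky bounded in $M$. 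Set $D := D(j, \xi)$ and $\mathcal{A}_\alpha := g(\alpha)$ for $\alpha < \xi$.

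Two key properties now follow: (i) for each $X \subseteq \kappa$, the set $B_X := \{\alpha < \xi : X \cap h(\alpha) \in \mathcal{A}_\alpha\}$ belongs to $D$, since $\xi \in j(B_X)$ reduces by elementarity to $j(X) \cap j(h)(\xi) = j(X) \cap \lambda \in j(g)(\xi) = A$, which holds by definition; and (ii) the function $\alpha \mapsto |\mathcal{A}_\alpha|$ is not almost one-to-one modulo $D$, because via the factor embedding $M_D \to M$ and Proposition \ref{Proposition: Functions properties}, the negation would provide some monotone $p \in V$ with $j(p)((2^\kappa)^M) \geq \xi$, in direct violation of (b). I then conclude $\neg\mathrm{Gal}(D, \kappa, 2^\kappa)$ by emulating Lemma \ref{Lemma: Thin implies non-Galvin} on the $2^\kappa$-family $\{B_X : X \subseteq \kappa\}$: supposing for contradiction some $\kappa$-size subfamily $\{B_{X_i}\}_{i<\kappa}$ with distinct $X_i$'s has $B^* := \bigcap_i B_{X_i} \in D$, use (ii) to pick $\theta < \kappa$ and an unbounded $B' \subseteq B^*$ with $|\mathcal{A}_\alpha| = \theta$ on $B'$ (where $\theta < \kappa$ because $(2^\kappa)^M \leq \lambda < j(\kappa)$); and since $j(h)(\xi) = \lambda \geq \kappa$, $h$ takes cofinally large values below $\kappa$ on $D$-large sets, so locate $\alpha^* \in B'$ with $h(\alpha^*) > \sigma$ where $\sigma < \kappa$ bounds the pairwise differ-points of $\theta^+$ of the $X_i$'s. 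Then $X_i \cap h(\alpha^*)$ for these $\theta^+$ indices are $\theta^+$ distinct elements of $\mathcal{A}_{\alpha^*}$, contradicting $|\mathcal{A}_{\alpha^*}| = \theta$.

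The principal obstacle is property (b) in the choice of $\xi$: verifying that $\xi$ can be chosen to lie above the $M$-sky of $(2^\kappa)^M$ while also encoding $(A, \lambda)$. This is precisely where the top-sky hypothesis on $\lambda$ interacts essentially with the partial soundness hypothesis; without the top-sky property, $(2^\kappa)^M$ could have its sky reaching all of $\mathrm{Ord}^M$, blocking the placement of $\xi$ and breaking (ii).
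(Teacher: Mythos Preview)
Your overall strategy matches the paper's: pick \(\xi\geq\lambda\) that codes \((A,\lambda)\) via \(V\)-functions \(g,h\), set \(D=D(j,\xi)\), and use the family \(B_X=\{\alpha<\kappa:X\cap h(\alpha)\in g(\alpha)\}\) to witness \(\neg\mathrm{Gal}(D,\kappa,2^\kappa)\). The paper does exactly this (with a surjection \(f\colon\kappa\to V_\kappa\) having unbounded preimages to guarantee simultaneously (a) and (b)).

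There is, however, a genuine gap in your final contradiction. After obtaining \(B^*=\bigcap_{i<\kappa}B_{X_i}\in D\), you invoke (ii) to pick \(\theta<\kappa\) and an \emph{unbounded} \(B'\subseteq B^*\) on which \(|\mathcal A_\alpha|=\theta\), and then you write ``since \(h\) takes cofinally large values on \(D\)-large sets, locate \(\alpha^*\in B'\) with \(h(\alpha^*)>\sigma\).'' But \(B'\) is merely unbounded, not in \(D\), so the unboundedness of \(h\) on \(D\)-large sets gives you nothing on \(B'\). This is precisely the point where the argument for \(\diamondsuit^-_{\text{thin}}\) diverges from Lemma~\ref{Lemma: Thin implies non-Galvin}: there the guessing is \(X\cap\alpha\) (so \(h=\mathrm{id}\), and unboundedness of \(B'\) suffices), whereas here you must couple smallness of \(|\mathcal A_\alpha|\) with largeness of \(h(\alpha)\) on the \emph{same} set.

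The paper fills this gap with a sky argument: one shows directly that there is \(\theta<\kappa\) such that \(h\) is unbounded on \(\{\alpha\in B^*:|\mathcal A_\alpha|<\theta\}\). If not, then \(g(\theta)=\sup\{h(\alpha):\alpha\in B^*,\ |\mathcal A_\alpha|\leq\theta\}\) defines a function \(g\colon\kappa\to\kappa\), and for every \(\alpha\in B^*\) one has \(g(|\mathcal A_\alpha|)\geq h(\alpha)\); passing to \(M_D\) this yields \(j_D(g)\bigl([\alpha\mapsto|\mathcal A_\alpha|]_D\bigr)\geq[h]_D=\lambda'\), contradicting that \((2^\kappa)^{M_D}\) lies in the first sky (below \(\lambda'\)). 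Your property (ii), as stated, only compares the sky of \(|\mathcal A_\alpha|\) to \([\mathrm{id}]_D\); what is actually needed is the comparison to \([h]_D\). The information is available (since the sky of \((2^\kappa)^M\) is exactly \(\lambda\)), but the argument you wrote does not use it and, as written, does not go through.
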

\begin{remark}
    Note that from the assumptions of the Theorem it follows that $\lambda<j(\kappa)$, indeed, if $\lambda=j(\kappa)$, then since we are assuming $\{j(A)\cap\lambda\mid A\subseteq\kappa\}\in M$, we have $j''P(\kappa)\in M$ and therefore $\{j(f)(\kappa)\mid f:\kappa\rightarrow\kappa\}\in M$. It follows that $M\models cf(j(\kappa))=2^\kappa$. But by elementarity, $M\models j(\kappa)$ is regular. Contradiction.
\end{remark}
\begin{proof}
    Denote $\mathcal{A}=\{j(A)\cap\lambda\mid A\subseteq\kappa\}\in M$. Enumerate $V_\kappa$ in $V$, $f\colon \kappa\rightarrow V_\kappa$ such that for every $x\in V_\kappa$, $f^{-1}[x]$ is unbounded in $\kappa$. Since $\mathcal{A}\in (V_{j(\kappa)})^M$, there is $j(\kappa)>\xi\geq\lambda$ such that $j(f)(\xi)=\mathcal{A}$.
    By similar arguments we can ensure that there are some functions $g, h: \kappa \rightarrow \kappa$ such that for the same $\xi$ we will also have $\kappa=j(g)(\xi)$ and $\lambda=j(h)(\xi)$.
    Let $D=D(j,\xi)$, $j_D\colon V\rightarrow M_D$ be the ultrapower and $k_D\colon M_D\rightarrow M$ be the factor map $k_D([\phi]_D)=j(\phi)(\xi)$. Note that $$\lambda=k_D([h]_D), \  \kappa=k_D([g]_D), \ \mathcal{A}=k_D([f]_D)$$ and therefore $\kappa,\lambda,\mathcal{A}\in Im(k_D)$. It follows that $\crit(k_D)>\kappa$ and $[g]_D=\kappa$. Since $$k_D([h]_D)=\lambda\leq\xi=k_D([id]_D),$$ the elementarity of $k_D$ implies that $[h]_D\leq [id]_D$. 
    Recall that for any function $\phi\colon \kappa\rightarrow \kappa$, $j(\phi)(\kappa)<\lambda$ thus by elementarity of $k_D$, 
    $$(*)\text{ for any function }\phi\colon \kappa\rightarrow \kappa, \ j_D(\phi)(\kappa)<[h]_D.$$ By our initial assumption, $\lambda>j(\alpha\mapsto 2^\alpha)(\kappa)=2^\kappa$ and since $M\models |\mathcal{A}|=2^\kappa$, $$M_D\models |[f]_D|=2^{[g]_D}<[h]_D.$$ Denote by $B_\alpha=f(\alpha)$, 
    note that  and fix a set $X^*\in D$ such that if $\alpha\in X^*$ then $|B_\alpha|=2^{g(\alpha)}<h(\alpha)$. 
    Pick any $2^\kappa$ distinct subsets of $\kappa$, $\l A_\alpha\r_{ \alpha<2^\kappa}$, then $j(A_\alpha)\cap \lambda\in \mathcal{A}$ and by elementarity $j_D(A_\alpha)\cap \lambda'\in B$. It follows that
    $$X_\alpha:=\{\xi<\kappa\mid A_\alpha\cap h(\xi)\in B_\xi\}\in D$$
    We claim that $\l X_\alpha\r_{ \alpha<2^\kappa}$ witness that $\neg \text{Gal}(U,\kappa,2^\kappa)$.
    Otherwise, there is $I\in[2^\kappa]^\kappa$ such that $X_I:=\cap_{i\in I}X_i\in D$. Let us argue that there must be $\theta<\kappa$ such that 
    $$\sup\{h(\xi):\xi\in X_I\cap X^*,\ 2^{g(\xi)}<\theta\}=\kappa.$$ To see this, assume otherwise, then for each $\theta<\kappa$ we can define $$\rho(\theta)=\sup\{h(\xi)\mid \xi\in X_I\cap X^*, \  2^{g(\xi)}\leq 2^\theta\}$$ then $\rho\colon\kappa\rightarrow \kappa$ is well defined. Since $2^{j_D(g)([id]_D)}=2^{[g]_D}=2^\kappa$, we conclude that $j_D(\rho)(\kappa)\geq j_D(h)([\text{id}]_D)=[h]_D$, contradicting $(*)$. We proceed as before, find  $\beta\in X_I\cap X^*$ such that the restriction of $\theta$-many of the sets in $I$ to $h(\beta)$ are distinct. This produces a contradiction.
\end{proof}
 Let us define the concept of a \textit{sky of an elementary embedding at $\delta$}, which was first considered in the case that $\delta=\omega$ by Puritz \cite{Puritz1971,Puritz1972} and generalized to measurable cardinals later by Kanamori \cite{Kanamori1976UltrafiltersOA}. This concept will enable us to simplify our future definitions.
 \begin{definition}
     Let $j:V\rightarrow M$ be an elementary embedding where $M$ is transitive and $\kappa$ be any cardinal. We define a transitive relation on $[\sup (j''{\kappa}),j(\kappa))$: 
     $\alpha\preceq \beta$ if there is a function $f:\kappa\rightarrow\kappa$ such that $j(f)(\beta)\geq\alpha$. We derive the equivalence relation $\alpha\equiv \beta$ if $\alpha\preceq \beta$ and $\beta\preceq \alpha$. A \textit{sky of $j$ at $\kappa$} is a $\equiv$-equivalence class. We denote by $sky(\alpha)$ the sky of $\alpha$ at $\kappa$ for the unique $\kappa$ such that $\alpha\in[\sup (j''{\kappa}),j(\kappa))$. 
 \end{definition}
Note that the only interesting situation is when $\kappa$ is not a continuity point of $j$. Since $M$ is transitive, $\prec$ is a well-defined well-ordering of the skies. Moreover, since $\alpha\leq \beta$ implies $\alpha\preceq\beta$ then each sky is a half-open interval. 

Suppse now that $U$ is a $\sigma$-complete ultrafilter over $\kappa$. It is clear that for any $\alpha<j_U(\kappa)$, $\alpha\preceq [id]_U$ as $\alpha=[f]_U$ for some $f:\kappa\rightarrow\kappa$ and therefore $\alpha=j_U(f)([id]_U)$. So  $sky([id]_U)$ is the maximal sky.
This simple observation, together with Proposition \ref{Proposition: Functions properties}, leads to  an elegant characterization of $p$-points in terms of skies:
\begin{corollary}
    Let $U$ be a $\sigma$-complete ultrafilter over $\kappa$, then $U$ is a $p$-point if and only if $j_U$ has a unique sky at $\kappa$. 
\end{corollary}
We can now reformulate Theorem \ref{Theorem: Partial sound} in terms of skies:
 \begin{corollary}
    Suppose that $U$ is a $\kappa$-complete, $\lambda$-sound ultrafilter over $\kappa$ such that $\lambda$ is the least element of the second sky at $\kappa$. Then $U$ is non-Galvin. 
\end{corollary}
\begin{proof}
    By the definition of $\xi$ in the proof of Theorem \ref{Theorem: Partial sound}, we can choose $\xi=[\text{id}]_U$ and the theorem ensures that $U=D(j_U,[\text{id}]_U)$ is non-Galvin.
\end{proof}
Note that a embedding $j$ with critical point $\kappa$ has at least two skies at $\kappa$ if and only if $\sup\{j(f)(\kappa)\mid f\colon \kappa\rightarrow\kappa\}<j(\kappa)$.
\begin{corollary}\label{Cor: Superstrong}
    Suppose that there is a superstrong embedding $j\colon V\rightarrow M$ with $\crit(j)=\kappa$ and at least two skies. Then $\kappa$ carries a non-Galvin ultrafilter.
\end{corollary}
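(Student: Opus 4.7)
The plan is to reduce the corollary directly to Theorem \ref{Theorem: Partial sound} by verifying its hypotheses for the given superstrong embedding $j$, and then to confirm that the ultrafilter $D$ produced by that theorem actually lives on $\kappa$ and is $\kappa$-complete.

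The ``two skies'' assumption $\lambda < j(\kappa)$ is built into the statement, so the only input of Theorem \ref{Theorem: Partial sound} still to be checked is that $\mathcal{A} := \{j(A) \cap \lambda \mid A \subseteq \kappa\}$ lies in $M$. I would argue this using superstrongness alone: since $\mathcal{A} \subseteq P(\lambda)$ and $\lambda < j(\kappa)$, the set $\mathcal{A}$ has rank well below $j(\kappa)$, so $\mathcal{A} \in V_{j(\kappa)} \subseteq M$ because $j$ is superstrong.

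Theorem \ref{Theorem: Partial sound} then yields an ordinal $\xi \in [\lambda, j(\kappa))$ such that $D := D(j,\xi)$ satisfies $\neg \text{Gal}(D, \kappa, 2^\kappa)$. To finish, I would observe that $D$ is in fact a $\kappa$-complete ultrafilter on $\kappa$. Since $j \restriction \kappa = \mathrm{id}$ and $\xi \geq \lambda > \kappa$, for every $\alpha < \kappa$ one has $j(\alpha) = \alpha < \xi$, so $\xi \notin j(\alpha)$, while $\xi < j(\kappa)$ gives $\xi \in j(\kappa)$; hence $\rho = \min\{\alpha \mid \xi \in j(\alpha)\} = \kappa$, so $D$ is an ultrafilter on $\kappa$. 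The $\kappa$-completeness follows from the fact that $j$ pointwise fixes any partition of $\kappa$ into fewer than $\kappa$ pieces: the image partition of $j(\kappa)$ contains $\xi$ in exactly one piece, whose preimage lies in $D$.

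I do not anticipate any substantive obstacle; the nontrivial combinatorial work is packaged inside Theorem \ref{Theorem: Partial sound}, and superstrongness is used only as a black box providing $V_{j(\kappa)} \subseteq M$. The only mild subtlety is ensuring that $D$ lives on $\kappa$ rather than on a larger ordinal, and this is guaranteed by the fact that $\xi$ is chosen strictly below $j(\kappa)$ but strictly above $\kappa$.
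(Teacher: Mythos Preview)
Your proposal is correct and matches the paper's intended argument: the paper states this corollary without proof, treating it as immediate from Theorem~\ref{Theorem: Partial sound}, and you have filled in exactly the details one needs---superstrongness gives $\mathcal{A}\in V_{j(\kappa)}\subseteq M$, and the resulting $D=D(j,\xi)$ is a $\kappa$-complete ultrafilter on $\kappa$ since $\kappa\leq\lambda\leq\xi<j(\kappa)$ and $\crit(j)=\kappa$. Your phrasing that $j$ ``pointwise fixes'' a partition of $\kappa$ into fewer than $\kappa$ pieces is slightly loose (it sends the sequence $\langle A_i\rangle_{i<\mu}$ to $\langle j(A_i)\rangle_{i<\mu}$, not to itself), but the conclusion is correct.
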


The reason that $\diamondsuit^*_{\text{thin}}$ (Definition \ref{Definition: Diamond star}) is not equivalent to  Dodd soundness is that we are only trying to cover $\{j_U(S)\cap [id]_U\mid S\subseteq\kappa\}$ with a set $A$ in $M_U$, while in Dodd soundness we need the actual set $\{j_U(S)\cap[id]_U\mid S\subseteq\kappa\}$ to be in $M_U$. Let us call this property \textit{covering soundness}. The innovation here is to work with covering $\lambda$-soundness which is just the ability to cover $\{j_U(S)\cap\lambda\mid S\subseteq\kappa\}$.

However, without any further assumptions, we can always take $P^{M_U}(\lambda)$ as our covering set, so covering $\lambda$-soundness is always true. What makes $\diamondsuit^*_{\text{thin}}$ non-trivial is the second requirement  that there is no function $f:\kappa\rightarrow\kappa$ such that $j_U(f)(|A|^{M_U})\geq [id]_U$. This rules out our previous example of $P^{M_U}(\lambda)$ or any other trivial example. Equipped with our new terminology of skies, we note that $(2)$ of Definition \ref{Definition: Diamond star} is in fact equivalent to $|A|^M$ not laying the top sky (namely $sky(|A|^{M_U})\prec sky([id]_U)$). 

Assuming (full) $\lambda$-soundness, the results of this section ensure that the ``covering" set could be chosen to be precisely $\{j_U(S)\cap\lambda\mid S\subseteq \kappa\}$. Moreover, with this choice, the $M_U$-cardinality of the covering set is $2^\kappa$. Then, under the assumption on $\lambda$ in Theorem \ref{Theorem: Partial sound} there is no function $f:\kappa\rightarrow\kappa$ such that $j_U(f)(2^\kappa)\geq \lambda$. 

Bearing the idea of skies in mind, we see the following common theme: if $A$ is the covering set and $\lambda$ is the degree of covering soundness then $sky(|A|^{M_U})\prec sky(\lambda)$. Let us formulate a diamond-like principle which generalize both Theorem \ref{Theorem: Partial sound} and $\diamondsuit^*_{\text{thin}}(U)$. It corresponds to covering $\lambda$-soundness, allowing $\lambda$ to lay in an arbitrary sky (except the least one). This diamond-like principle is essential to  prove the characterization of $\sigma$-complete non-Galvin ultrafilters.
\begin{definition}
Let $U$ be an ultrafilter over a regular cardinal $\kappa$.  $\diamondsuit^-_{\text{thin}}(U)$ is the statement that there is $A\in M_U$ and $\lambda<j_U(\kappa)$ such that:
    \begin{enumerate}
        \item $\{j_U(S)\cap\lambda\mid S\subseteq\kappa\}\subseteq A$. 
        \item there is no function $f\colon\kappa\rightarrow \kappa$ such that $j_U(f)(|A|^M)\geq \lambda$\footnote{ i.e. $sky(|A|^M)\prec sky(\lambda)$.}.
    \end{enumerate}
    Clearly $\diamondsuit^*_{\text{thin}}(U)$ implies $\diamondsuit^-_{\text{thin}}(U)$ by taking $\lambda=[id]_U$. Also, 
    \begin{corollary}\label{corollary: lambda sound and diamond minus}
        If $U$ is an ultrafilter over a regular cardinal $\kappa$ which is $\lambda$-sound where  $\lambda$ is such that for every function $f:\kappa\rightarrow\kappa$, $j_U(f)(\kappa)<\lambda$, then $\diamondsuit^-_{\text{thin}}(U)$.
    \end{corollary}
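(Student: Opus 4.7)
The plan is to verify both clauses of $\diamondsuit^-_{\text{thin}}(U)$ directly using the canonical witness $A := \{j_U(S)\cap\lambda : S\subseteq\kappa\}$. By $\lambda$-soundness, $A = \rng(j^\lambda)$ with $j^\lambda\in M_U$, so $A\in M_U$; clause~(1) of Definition~$\diamondsuit^-_{\text{thin}}(U)$ holds by construction.

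For clause~(2), the strategy is to show that $|A|^{M_U}$ sits in the same sky as $\kappa$ in~$M_U$, after which the hypothesis on~$\lambda$ immediately delivers the bound $j_U(f)(|A|^{M_U})<\lambda$ for every $f\in{}^\kappa\kappa$. Since $A$ is a surjective image of $P(\kappa)\cap M_U$ via the function $j^\lambda\in M_U$, we get $|A|^{M_U}\leq (2^\kappa)^{M_U}$ inside $M_U$. By \L{}o\'s, $(2^\kappa)^{M_U}$ is represented modulo $U$ by $g(\alpha) = 2^\alpha$, and assuming $\kappa$ is a strong limit (which is implicit whenever $U$ is $\kappa$-complete on a measurable cardinal), $g\in {}^\kappa\kappa$ and $|A|^{M_U}\leq j_U(g)(\kappa)$. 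Given any $f\colon\kappa\to\kappa$, replace $f$ by its monotone closure $f'(\alpha) = \sup_{\beta\leq\alpha}f(\beta)$ (still in ${}^\kappa\kappa$ by regularity of~$\kappa$) so that $j_U(f')$ is monotone and dominates $j_U(f)$ pointwise. Then
\[ j_U(f)(|A|^{M_U}) \leq j_U(f')(j_U(g)(\kappa)) = j_U(f'\circ g)(\kappa) < \lambda \]
by the hypothesis on~$\lambda$, which establishes clause~(2).

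The only delicate step is the identification $(2^\kappa)^{M_U} = j_U(g)(\kappa)$ with $g\in{}^\kappa\kappa$, which relies on the strong-limit condition on~$\kappa$. Should $\kappa$ fail to be a strong limit, the representative $\alpha\mapsto 2^\alpha$ might exceed $\kappa$ on a $U$-large set; one then argues directly from a \L{}o\'s representation $j^\lambda = [\alpha\mapsto h_\alpha]_U$ that $|A|^{M_U} = j_U(\alpha\mapsto |\rng(h_\alpha)|)(\kappa)$ and bounds $|\rng(h_\alpha)|$ using finer properties of the particular sound ultrafilter $U$ at hand. In the measurable/$\kappa$-complete context relevant to this paper, however, the direct computation above suffices, and this is where I would expect the main (minor) obstacle to lie.
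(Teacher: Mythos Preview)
Your proof is correct and follows essentially the same approach as the paper: both take $A=\{j_U(S)\cap\lambda:S\subseteq\kappa\}$, observe that $|A|^{M_U}=(2^\kappa)^{M_U}$, and reduce clause~(2) to the hypothesis via the function $\alpha\mapsto 2^\alpha$ (the paper phrases this contrapositively as ``otherwise $g'(\alpha)=g(2^\alpha)$ would violate the hypothesis''). You are in fact more careful than the paper in flagging that this step needs $\kappa$ to be a strong limit; the paper uses the same implicit assumption without comment.
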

    \begin{proof}
        By $\lambda$-soundness of $U$, $A:=\{j_U(S)\cap\lambda\mid S\subseteq \kappa\}\in M_U$ and $M_U\models |A|=2^\kappa$. There cannot be a function $g:\kappa\rightarrow\kappa$ such that $j_U(g)(2^\kappa)\geq \lambda$, since otherwise, the function $g'(\alpha)=g(2^\alpha)$ would be a function from $\kappa$ to $\kappa$ such that $j_U(g')(\kappa)\geq \lambda$, contradicting the assumptions of the corollary. 
    \end{proof}
    \begin{theorem}\label{Theorem: diamond minus implies non-Galvin}
        $\diamondsuit^-_{\text{thin}}(U)$ implies that $U$ is non-Galvin.
    \end{theorem}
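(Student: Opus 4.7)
The plan is to adapt the proof of Theorem~\ref{Theorem: Partial sound} to the ultrafilter setting. First I would represent the diamond witness in $V$: choose $\langle\mathcal A_\alpha\rangle_{\alpha<\kappa}$ and $h\colon\kappa\to\kappa$ with $A=[\alpha\mapsto\mathcal A_\alpha]_U$, $\lambda=[h]_U$, and (without loss of generality) $\mathcal A_\alpha\subseteq\mathcal P(h(\alpha))$. Setting $c(\alpha):=|\mathcal A_\alpha|$, which represents $|A|^{M_U}$ and takes values below $\kappa$ modulo $U$ (since $|A|^{M_U}<\lambda<j_U(\kappa)$), condition~(1) translates to the assertion that $X_S:=\{\alpha:S\cap h(\alpha)\in\mathcal A_\alpha\}\in U$ for every $S\subseteq\kappa$, and condition~(2) translates to the assertion that $\{\alpha:h(\alpha)>f(c(\alpha))\}\in U$ for every $f\colon\kappa\to\kappa$.

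Next, the strategy is to assume towards contradiction that $U$ has the Galvin property, fix $2^\kappa$ pairwise distinct subsets $S_i\subseteq\kappa$, and use Galvin to obtain $I\in[2^\kappa]^\kappa$ with $X_I:=\bigcap_{i\in I}X_{S_i}\in U$. Enumerating $I=\{i_\xi:\xi<\kappa\}$, for each $\xi<\kappa$ I would set $\delta_\xi:=\sup\{\gamma_{\eta,\zeta}:\eta<\zeta<\xi\}$, where $\gamma_{\eta,\zeta}<\kappa$ is the least ordinal on which $S_{i_\eta}$ and $S_{i_\zeta}$ disagree; by regularity of $\kappa$, $\delta_\xi<\kappa$. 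The central observation is that for $\alpha\in X_I$ with $h(\alpha)>\delta_\xi$, the $|\xi|$-many restrictions $S_{i_\eta}\cap h(\alpha)$ ($\eta<\xi$) are pairwise distinct members of $\mathcal A_\alpha$, so $c(\alpha)\geq|\xi|$; equivalently, $c(\alpha)\leq\theta$ together with $\theta^+<\kappa$ forces $h(\alpha)\leq\delta_{\theta^+}$.

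Finally, defining $g\colon\kappa\to\kappa$ by $g(\theta):=\delta_{\theta^+}$ whenever $\theta^+<\kappa$, we obtain $g(c(\alpha))\geq h(\alpha)$ on $Y:=\{\alpha\in X_I:c(\alpha)^+<\kappa\}$. When $\kappa$ is a limit cardinal, $Y=X_I\in U$, contradicting condition~(2). The main obstacle will be the edge case where $\kappa=\mu^+$ is a successor cardinal and $\{\alpha\in X_I:c(\alpha)=\mu\}\in U$, forcing $|A|^{M_U}=\mu$; here condition~(2) gives $\lambda>f(\mu)$ for every $f\colon\kappa\to\kappa$, so $\lambda\geq\kappa$. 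Then for distinct $S,T\subseteq\kappa$, writing $j_U(S)\cap\lambda=[\alpha\mapsto S\cap h(\alpha)]_U$ (and similarly for $T$), the equality $j_U(S)\cap\lambda=j_U(T)\cap\lambda$ would force $\{\alpha:h(\alpha)\leq\min(S\triangle T)\}\in U$, hence $\lambda\leq\min(S\triangle T)<\kappa$, contrary to $\lambda\geq\kappa$. Consequently $S\mapsto j_U(S)\cap\lambda$ embeds $\mathcal P(\kappa)$ into $A$, giving $|A|^V\geq 2^\kappa$; but absoluteness of bijections between $M_U$ and $V$ yields $|A|^V=|A|^{M_U}=\mu<2^\kappa$, a contradiction.
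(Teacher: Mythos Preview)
Your main argument follows the same route as the paper's proof: represent $A$ and $\lambda$ by sequences $\langle\mathcal A_\alpha\rangle$ and $h$, produce the sets $X_S$, apply Galvin to get $X_I\in U$, and then build a function $g$ so that $g(c(\alpha))\geq h(\alpha)$ on a $U$-large set, contradicting condition~(2). The paper takes $g(\theta)=\sup\{h(\xi):\xi\in X_I,\ c(\xi)\leq\theta\}$ and splits into ``$g$ is everywhere ${<}\kappa$'' (immediate contradiction) versus ``some $\theta<\kappa$ has $\{h(\xi):c(\xi)<\theta\}$ unbounded'' (then $\theta$ many sets from $I$ are separated below some $\beta^*<\kappa$, and any $\beta$ with $c(\beta)<\theta$, $h(\beta)>\beta^*$ yields $|\mathcal A_\beta|\geq\theta>c(\beta)$). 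Your choice $g(\theta)=\delta_{\theta^+}$ is a natural variant, but it forces you into the successor edge case, and there your argument breaks down.

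Specifically, when $\kappa=\mu^+$ and $c(\alpha)=\mu$ on a $U$-large set, you assert $|A|^{M_U}=\mu$; in fact $[c]_U=j_U(\mu)$, which need not equal $\mu$ since $U$ is not assumed $\kappa$-complete. Your injectivity step is essentially right (condition~(2) gives $\lambda\geq\sup j_U[\kappa]$, and then $j_U(S)\cap\lambda=j_U(T)\cap\lambda$ would force $\lambda\leq j_U(\min(S\mathbin{\triangle}T))<\sup j_U[\kappa]$), so indeed $|A|^V\geq 2^\kappa$. But ``absoluteness of bijections'' only gives $|A|^V=|j_U(\mu)|^V$, and $|j_U(\mu)|^V$ can be as large as $2^\kappa$ (one only knows $j_U(\mu)<j_U(\kappa)$ and $|j_U(\kappa)|^V\leq 2^\kappa$). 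So no contradiction results. The clean fix is simply to use the paper's $g$; then $g(c(\alpha))\geq h(\alpha)$ holds for \emph{all} $\alpha\in X_I$ with no exceptional value of $c(\alpha)$, and when $g$ fails to map into $\kappa$ you get a $\theta<\kappa$ allowing the direct counting contradiction with $\theta$ (not $\theta^+$) many sets.
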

\end{definition}
\begin{proof}
    Fix any $\l X_\alpha\r_{\alpha<2^\kappa}$ sequence of distinct subsets of $\kappa$. $[\alpha\mapsto A_\alpha]_U=A$ and $[f]_U=\lambda=j_U(f)([id]_U)$. By our assumption,
    $$B_\alpha=\{\xi<\kappa\mid X_\alpha\cap f(\xi)\in A_\alpha\}\in U$$
    We claim that $\l B_\alpha\r_{ \alpha<2^\kappa}$ witness that $\neg \text{Gal}(U,\kappa,2^\kappa)$.
    Otherwise, there is $I\in[2^\kappa]^\kappa$ such that $B_I:=\cap_{i\in I}B_i\in U$. Consider the map $\xi\mapsto |A_\xi|$, note that  $|A_\xi|\leq \pi(\xi)$ where $j_U(\pi)([\text{id}]_U)=|A|$, and therefore there must be $\theta<\kappa$ such that 
    $$\sup\{f(\xi):\xi\in B_I,\ \pi(\xi)<\theta\}=\kappa.$$ Just assume otherwise, then for each $\theta<\kappa$ we can define $$g(\theta)=\sup\{f(\xi)\mid \xi\in B_I, \  \pi(\xi)\leq\theta\}$$ then $g\colon\kappa\rightarrow \kappa$ is well defined. Since $j_U(\pi)([\text{id}]_D)=|A|$ we conclude that $j_U(g)(|A|)\geq j_U(f)([\text{id}]_D)=\lambda$, contradicting condition $(2)$. Now the continuation is as before, we find  $\beta\in B_I$ such that $f(\beta)$ is high enough so that the restriction of $\theta^{+}$-many of the sets in $I$ to $f(\beta)$ are distinct. This produces a contradiction.
\end{proof}
The advantage of using the class of ultrafilters satisfying $\diamondsuit^-_{\text{thin}}(U)$ over the class satisfying $\diamondsuit^*_{\text{thin}}$, is that is it upward closed  with respect to the Rudin-Keisler ordering.
\begin{lemma}\label{Lemma: Rudin-Keisler upword closure of diamond minus}
    Suppose that $\diamondsuit^-_{\text{thin}}(U)$ holds and $U\leq_{RK}W$, then $\diamondsuit^-_{\text{thin}}(W)$ holds.
\end{lemma}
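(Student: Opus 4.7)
The plan is to transfer the witnesses for $\diamondsuit^-_{\text{thin}}(U)$ through the standard factor embedding associated with the Rudin-Keisler reduction. First I would fix a function $\pi\colon \dom(W)\to \dom(U)$ witnessing $U\leq_{RK}W$, together with the associated elementary factor map $k\colon M_U\to M_W$ defined by $k([f]_U)=[f\circ \pi]_W$, which satisfies $j_W=k\circ j_U$. If $A\in M_U$ and $\lambda<j_U(\kappa)$ witness $\diamondsuit^-_{\text{thin}}(U)$, I would propose $A':=k(A)\in M_W$ and $\lambda':=k(\lambda)<j_W(\kappa)$ as witnesses for $W$.

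For condition (1), I would verify that for each $S\subseteq \kappa$, the element $j_U(S)\cap \lambda$ belongs to $A$ in $M_U$, and applying $k$ then gives $k(j_U(S)\cap \lambda)\in k(A)=A'$. Since $k$ is elementary, it commutes with intersection in the sense that $k(j_U(S)\cap \lambda)=k(j_U(S))\cap k(\lambda)$, and since $k\circ j_U=j_W$ and $k(\lambda)=\lambda'$, this is exactly $j_W(S)\cap \lambda'$. Hence $\{j_W(S)\cap \lambda':S\subseteq \kappa\}\subseteq A'$.

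For condition (2), I would argue by contrapositive. Suppose $f\colon \kappa\to\kappa$ were to satisfy $j_W(f)(|A'|^{M_W})\geq \lambda'$. By elementarity of $k$ we have $|A'|^{M_W}=k(|A|^{M_U})$ as ordinals, and $j_W(f)=k(j_U(f))$. Combining these with the fact that $k$ is order-preserving on the ordinals, the inequality pulls back to $j_U(f)(|A|^{M_U})\geq \lambda$, contradicting clause (2) of $\diamondsuit^-_{\text{thin}}(U)$ for $U$.

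I do not foresee any real obstacle: $\diamondsuit^-_{\text{thin}}$ is formulated entirely in terms of the ultrapower embedding, a single element of the target model, and an ordinal below the image of $\kappa$, and all three objects transfer cleanly along the elementary factor $k$. The only subtlety worth flagging is that $k$ need not be the identity on ordinals, so one must set $\lambda':=k(\lambda)$ rather than $\lambda$ itself, and one must read $|A|^{M_U}$ as an internal cardinality so that elementarity delivers $|A'|^{M_W}=k(|A|^{M_U})$ in the last step.
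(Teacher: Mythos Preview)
Your proposal is correct and follows essentially the same approach as the paper: both take the factor map $k\colon M_U\to M_W$ with $j_W=k\circ j_U$, push the witnesses forward to $k(A)$ and $k(\lambda)$, and verify the two clauses of $\diamondsuit^-_{\text{thin}}$ by elementarity of $k$. Your write-up is in fact slightly more explicit about how $k$ arises from the Rudin--Keisler projection and about the subtlety that $\lambda'=k(\lambda)$ rather than $\lambda$.
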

\begin{proof}
    Let $k:M_U\rightarrow M_W$ be an elementary embedding such that $j_W=k\circ j_U$ and $A,\lambda$ witnessing $\diamondsuit^-_{\text{thin}}(U)$. For every $S\subseteq \kappa$, we have $$j_W(S)\cap k(\lambda)=k(j_U(S)\cap\lambda)\in k(A).$$ Hence $\{j_W(S)\cap k(\lambda)\mid S\subseteq \kappa\}\subseteq k(A)\in M_W$. By elementarity, $|k(A)|^{M_W}=k(|A|^{M_U})$. Suppose toward contradiction that there is a function $g:\kappa\rightarrow\kappa$ such that $j_W(g)(k(|A|^{M_U}))\geq k(\lambda)$, then $k(j_U(g)(|A|))\geq k(\lambda)$ and by elementarity if $k$, $j_U(g)(|A|)\geq \lambda$, contradiction. 
\end{proof}
\begin{lemma}\label{Lemma: discrete limit of diamond minus}
    Suppose that
    $Z$ is an ultrafilter on \(\kappa\) which is the $U$-limit of a discrete
    sequence of
     ultrafilters
    $W_\xi$ on $\kappa$ and
     such that 
    \(\diamondsuit^-_\textnormal{thin}(W_\xi)\).
    Then 
    \(\diamondsuit^-_\textnormal{thin}(Z)\).
    
\end{lemma}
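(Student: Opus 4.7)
The plan is to reduce to the sum and apply {\L}o\'{s}'s theorem. Since the sequence $\l W_\xi\r_{\xi<\kappa}$ is discrete, the Fact relating limits to sums gives $Z\equiv_{RK} \sum_U W_\xi$, so by Lemma \ref{Lemma: Rudin-Keisler upword closure of diamond minus} it suffices to prove $\diamondsuit^-_{\text{thin}}(\sum_U W_\xi)$. The payoff of this reduction is that the ultrapower embedding $j_\Sigma := j_{\sum_U W_\xi}$ factors cleanly as $j_\Sigma = j_{W^*}^{M_U}\circ j_U$, where $W^* := [\xi\mapsto W_\xi]_U$ is, from $M_U$'s perspective, an ultrafilter on $j_U(\kappa)$.

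For each $\xi < \kappa$, fix $(A^\xi,\lambda^\xi)$ witnessing $\diamondsuit^-_{\text{thin}}(W_\xi)$, and set $A^* := [\xi\mapsto A^\xi]_U$ and $\lambda^* := [\xi\mapsto\lambda^\xi]_U$ in $M_U$. By {\L}o\'{s}'s theorem applied to $U$, the pair $(A^*,\lambda^*)$ witnesses $\diamondsuit^-_{\text{thin}}(W^*)$ inside $M_U$: explicitly, $A^*\in M_{W^*}^{M_U}$, $\lambda^* < j_{W^*}^{M_U}(j_U(\kappa))$, every $T\subseteq j_U(\kappa)$ in $M_U$ satisfies $j_{W^*}^{M_U}(T)\cap\lambda^*\in A^*$, and no function $g\colon j_U(\kappa)\to j_U(\kappa)$ in $M_U$ has $j_{W^*}^{M_U}(g)(|A^*|)\geq\lambda^*$.

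The same pair $(A^*,\lambda^*)$ then witnesses $\diamondsuit^-_{\text{thin}}(\sum_U W_\xi)$ externally, which together with the reduction will complete the proof. For condition (1), given any $S\subseteq\kappa$, the set $j_U(S)$ is a subset of $j_U(\kappa)$ in $M_U$, so the internal (1) applied to $T = j_U(S)$ yields $j_\Sigma(S)\cap\lambda^* = j_{W^*}^{M_U}(j_U(S))\cap\lambda^*\in A^*$. For condition (2), if some $f\colon\kappa\to\kappa$ in $V$ satisfied $j_\Sigma(f)(|A^*|)\geq\lambda^*$, then $j_U(f)\in M_U$ would be a function $j_U(\kappa)\to j_U(\kappa)$ with $j_{W^*}^{M_U}(j_U(f))(|A^*|) = j_\Sigma(f)(|A^*|) \geq \lambda^*$, contradicting the internal (2). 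The main technical subtlety is the stratification: $W^*$ naturally lives in $M_U$ as an ultrafilter on $j_U(\kappa)$ rather than on $\kappa$, so the internal quantifiers in $\diamondsuit^-_{\text{thin}}(W^*)$ range over subsets and self-maps of $j_U(\kappa)$ in $M_U$'s sense; happily these are exactly the classes in which the pushforwards $j_U(S)$ and $j_U(f)$ land, so the transfer back to $V$ goes through.
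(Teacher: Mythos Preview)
Your proof is correct, but takes a genuinely different route than the paper's.

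The paper argues combinatorially: it uses the discrete partition $\langle S_\xi\rangle_{\xi<\kappa}$ to glue the witnessing sequences together pointwise, setting $\mathcal{A}_\alpha=\mathcal{A}^\xi_\alpha$ and $f(\alpha)=f_\xi(\alpha)$ for the unique $\xi$ with $\alpha\in S_\xi$, and then verifies both clauses of $\diamondsuit^-_{\text{thin}}(Z)$ by hand, exploiting that any $B\in Z$ lies in some $W_\xi$. Your argument instead passes through the sum: you invoke the fact that discreteness gives $Z\equiv_{RK}\sum_U W_\xi$, apply Lemma~\ref{Lemma: Rudin-Keisler upword closure of diamond minus}, and then run a {\L}o\'s transfer followed by the factorization $j_\Sigma=j_{W^*}^{M_U}\circ j_U$. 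This is essentially the proof of the paper's next lemma (Lemma~\ref{Lemma: non discrete}), specialized to the discrete case; in effect you have observed that Lemma~\ref{Lemma: discrete limit of diamond minus} is subsumed by Lemma~\ref{Lemma: non discrete} once one notes that discreteness yields the required RK-equivalence. What your approach buys is conceptual economy and a uniform treatment of both lemmas; what the paper's approach buys is a self-contained argument that does not rely on the RK upward-closure lemma or on tracking the internal ultrapower $j_{W^*}^{M_U}$. One small point of care: $\sum_U W_\xi$ does not literally live on the regular cardinal $\kappa$, so strictly speaking $\diamondsuit^-_{\text{thin}}(\sum_U W_\xi)$ is being interpreted relative to $\kappa$ rather than the underlying set of the sum; the paper itself makes the same identification in the proof of Lemma~\ref{Lemma: non discrete}, and the RK-isomorphism with $Z$ transports the witnesses cleanly.
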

    \begin{proof}
        Fix a partition of \(\kappa\)
        into sets \(S_\xi\in W_\xi\).
        For each \(\xi < \kappa\),
        let \(\langle\mathcal A^\xi_\alpha\rangle_{\alpha < \kappa}\) and $f_\xi$
        witness that 
        \(\diamondsuit^-_\textnormal{thin}(W_\xi)\). Then 
        let 
        \(\mathcal A_\alpha = 
        \mathcal A^\xi_\alpha\)
        where \(\xi < \kappa\)
        is unique such that \(\alpha\in S_\xi\) and $f(\alpha)=f_\xi(\alpha)$. Let $A\subseteq \kappa$, we would like to show that $B:=\{\alpha<\kappa\mid A\cap f(\alpha)\in \mathcal{A}_{\alpha}\}\in U\text{-}\lim \l W_\xi\r_{\xi<\kappa}$. Take any $\xi<\kappa$, then $B_\xi:=\{\alpha\in S_\xi\mid A\cap f_\xi(\alpha)\in \mathcal{A}^\xi_\alpha\}\in W_\xi$. Since for each $\alpha\in S_\xi$ and $f(\alpha)=f_\xi(\alpha)$, $\mathcal{A}_\alpha=\mathcal{A}^\xi_\alpha$, we conclude that $B_\xi\subseteq B$ and therefore $B\in W_\xi$. It follows that $B\in U\text{-}\lim\l W_\xi\r_{\xi<\kappa}$. It remains to show that $c(\alpha)=|\mathcal{A}_\alpha|$ is in a lower sky than $f$. Suppose otherwise and let $g:\kappa\rightarrow\kappa$ such that for some $B\in W$, $\alpha\in B\rightarrow g(c(\alpha))\geq f(\alpha)$.  Pick any $\xi<\kappa$ such that $B\in W_\xi$ to reach a contradiction note that $B\cap S_\xi\in W_\xi$, and for every $\alpha\in B\cap S_\xi$, $g(|A^\xi|_\alpha)=g(c(\alpha))\geq f(\alpha)=f_\xi(\alpha)$. However, the sky $\alpha\mapsto |\mathcal{A}^\xi_\alpha|$ is below the sky of $f_\xi$, contradicting the choice of $f_\xi$.  
    \end{proof}
    For a non-discrete sequence, we have the following:
    \begin{lemma}\label{Lemma: non discrete}
        Suppose that $Z$ is an ultrafilter over $\kappa$ which is Rudin-Keisler equivalent to $\sum_U\l W_\xi\r_{\xi<\lambda}$, where $U$ is any ultrafilter over $\lambda\leq\kappa$ and $W_\xi's$ are ultrafilters over $\kappa$ such that $\diamondsuit^-_{\text{thin}}(W_\xi)$ holds. Then $\diamondsuit^-_{\text{thin}}(Z)$ holds. 
    \end{lemma}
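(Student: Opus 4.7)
The plan is to reduce to the discrete case handled by Lemma \ref{Lemma: discrete limit of diamond minus}, via two moves: first, rewrite $\sum_U\langle W_\xi\rangle_{\xi<\lambda}$ as a $U$-limit of a manifestly discrete sequence on $\lambda\times\kappa$; second, transport everything back to $\kappa$ through a bijection $\phi\colon\lambda\times\kappa\to\kappa$, relying on the Rudin-Keisler invariance of $\diamondsuit^-_{\text{thin}}$ from Lemma \ref{Lemma: Rudin-Keisler upword closure of diamond minus} to keep the hypothesis intact at each step.

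For the first move, I would let $\tilde W_\xi$ denote the pushforward of $W_\xi$ along the injection $k\mapsto(\xi,k)$, so that $\tilde W_\xi$ is the ultrafilter on $\lambda\times\kappa$ concentrating on $\{\xi\}\times\kappa$ that is isomorphic to $W_\xi$. Unwinding definitions yields $\sum_U\langle W_\xi\rangle_{\xi<\lambda}=U\text{-}\lim\langle\tilde W_\xi\rangle_{\xi<\lambda}$, and the sequence $\langle\tilde W_\xi\rangle_{\xi<\lambda}$ is discrete because the fibers $\{\xi\}\times\kappa$ are pairwise disjoint. Since each $\tilde W_\xi\equiv_{RK}W_\xi$, Lemma \ref{Lemma: Rudin-Keisler upword closure of diamond minus} transfers $\diamondsuit^-_{\text{thin}}$ from $W_\xi$ to $\tilde W_\xi$.

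For the second move, I would use $\lambda\leq\kappa$ to fix a bijection $\phi\colon\lambda\times\kappa\to\kappa$, and set $W'_\xi=\phi_*\tilde W_\xi$ and $Z'=\phi_*(\sum_U\langle W_\xi\rangle_{\xi<\lambda})$. A direct check on sets, using that $\tilde W_\xi$ concentrates on $\{\xi\}\times\kappa$, shows $Z'=U\text{-}\lim\langle W'_\xi\rangle_{\xi<\lambda}$; the sequence $\langle W'_\xi\rangle_{\xi<\lambda}$ is again discrete, partitioned by the sets $\phi``(\{\xi\}\times\kappa)$; and each $W'_\xi\equiv_{RK}W_\xi$, so $\diamondsuit^-_{\text{thin}}(W'_\xi)$ still holds by Lemma \ref{Lemma: Rudin-Keisler upword closure of diamond minus}. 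Lemma \ref{Lemma: discrete limit of diamond minus} then yields $\diamondsuit^-_{\text{thin}}(Z')$, and a final application of Lemma \ref{Lemma: Rudin-Keisler upword closure of diamond minus} along $Z\equiv_{RK}\sum_U\langle W_\xi\rangle_{\xi<\lambda}\equiv_{RK}Z'$ transfers it to $Z$.

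No step is difficult; the argument is essentially bookkeeping, made possible by the fact that $\diamondsuit^-_{\text{thin}}$ is both Rudin-Keisler upward-closed and stable under discrete $U$-limits. The only point that deserves care is the identity $\phi_*(\sum_U\langle W_\xi\rangle)=U\text{-}\lim\langle\phi_*\tilde W_\xi\rangle$, which reduces to the observation that $\phi^{-1}(X)\in\tilde W_\xi$ iff the $\xi$-fiber of $\phi^{-1}(X)$ lies in $W_\xi$; this I expect to be the mildest of obstacles.
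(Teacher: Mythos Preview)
Your argument is correct, but it takes a different route from the paper's. The paper works directly with the ultrapower formulation: it lets $W^*=[\xi\mapsto W_\xi]_U$ in $M_U$, observes by \L o\'s that $M_U\models\diamondsuit^-_{\text{thin}}(W^*)$, and then uses the factorization $j_{\sum_U\langle W_\xi\rangle}=j_{W^*}^{M_U}\circ j_U$ together with $M_{\sum_U\langle W_\xi\rangle}=(M_{W^*})^{M_U}$ to verify that the very witnesses $A,\lambda$ for $\diamondsuit^-_{\text{thin}}(W^*)$ in $M_U$ already witness $\diamondsuit^-_{\text{thin}}$ for the sum (and hence for $Z$ by RK-invariance). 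No reduction to the discrete case is needed.

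Your approach is modular: you reshuffle the sum into a $U$-limit of a manifestly discrete sequence on $\kappa$ and then invoke Lemma~\ref{Lemma: discrete limit of diamond minus} plus Lemma~\ref{Lemma: Rudin-Keisler upword closure of diamond minus}. This is perfectly valid; the only point worth flagging is that Lemma~\ref{Lemma: discrete limit of diamond minus} is stated (and proved) with the indexing ultrafilter on $\kappa$, while you apply it with $U$ on $\lambda\leq\kappa$. The proof of that lemma goes through verbatim for any index set (the partition of $\kappa$ need only cover a $Z$-large set), so this is harmless, but you should say so explicitly. The trade-off: the paper's argument is self-contained and explains conceptually why sums preserve $\diamondsuit^-_{\text{thin}}$ (the two-step iteration absorbs the property from the internal ultrafilter), while your argument is shorter bookkeeping that reuses earlier lemmas.
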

    \begin{proof}
        Let $W^*=[\xi\mapsto W_\xi]_U$. By our assumption,  $$M_U\models W^*\text{ is an ultrafilter over }j_U(\kappa)\text{ and } \diamondsuit^-_{\text{thin}}(W^*).$$
        Let $j_{W^*}:M_{U}\rightarrow M_{W^*}$ be the ultrapower of $M_U$ by $W^*$. It follows that there is $A\in M_{W^*}$ and $\lambda<j_{W^*}(j_U(\kappa))$ such that $\{j_{W^*}(S)\cap\lambda\mid S\in P(j_U(\kappa))^{M_U}\}\subseteq A$
        and there is no function $f:j_U(\kappa)\rightarrow j_U(\kappa)\in M_U$ such that $j_{W^*}(f)(|A|^{M_{W^*}})\geq\lambda$. Note that $M_{W^*}=M_{\sum_U\l W_\xi\r_{\xi<\lambda}}$ and $j_{\sum_U\l W_\xi\r_{\xi<\lambda}}=j_{W^*}\circ j_U$. We claim that $A$ and $\lambda$ witness 
        that $\diamondsuit^-_{\text{thin}}(\sum_U\l W_\xi\r_{\xi<\lambda})$. Indeed, for any $X\subseteq \kappa$, $j_U(X)\in P(j_U(\kappa))^{M_U}$ and therefore $j_{W^*}(j_U(X))\cap \lambda\in A$. 
        Similarly, for any function $f:\kappa\rightarrow \kappa$, $j_U(f):j_U(\kappa)\rightarrow j_U(\kappa)\in M_U$ and therefore $j_{W^*}(j_U(f))(|A|^{M_{W^*}})<\lambda$. 
    \end{proof}
\section{Non-Galvin cardinals}
As pointed out in the introduction, a measurable cardinal does not imply the existence of a non-Galvin ultrafilter \cite{Parttwo}. In \cite{SatInCan}, the question regarding which large cardinal properties imply the existence of non-Galvin ultrafilters was raised and in \cite{TomNatasha} a $\kappa$-compact cardinal was proven to carry such an ultrafilter. We open this section with a new large cardinal property:
\begin{definition}\label{Definition: non-Galvin cardinal}
    $\kappa$ is called \textit{non-Galvin cardinal} if there are elementary embeddings $j\colon V\rightarrow M$, $i\colon V\rightarrow N$, $k\colon N\rightarrow M$ such that:
    \begin{enumerate}
        \item $k\circ i=j$.
        \item $\crit(j)=\kappa$, $\crit(k)=i(\kappa)$.
        \item ${}^{\kappa}N\subseteq N$ and ${}^{\kappa}M\subseteq M$
        \item there is $A\in M$ such that $i''\kappa^+\subseteq A$ and $M\models|A|<i(\kappa)$.
    \end{enumerate}
\end{definition}
Note that by condition $(4)$, $\kappa\subseteq A$ and that $A$ can be chosen so that $\min(A\setminus \kappa)=i(\kappa)$. 

The next proposition implies that we may assume that the embedding $j$ in the definition of non-Galvin cardinals is an ultrapower embedding and the embedding $i$ is an extender ultrapower derived from it.

\begin{proposition}\label{prp:nongalvin_via_uf}
    Suppose that $j\colon V\rightarrow M$, $i\colon V\rightarrow N$, $k\colon N\rightarrow M$ and $A\in M$ are as in Definition \ref{Definition: non-Galvin cardinal}. Then there is a $\kappa$-complete ultrafilter $U$ over $V_\kappa$ and $\rho<j_U(\kappa)$ which, together with the ultrapower by the  $(\kappa,\rho)$-extender $E$ derived from $j_U$ and $[\text{id}]_U$, witnesses that $\kappa$ is non-Galvin. Namely, the following hold:
    \begin{enumerate}
        \item $k_{E}\circ j_{E}=j_U$.
        \item  $\crit(j_U)=\kappa$, $\crit(k_{E})=\rho=j_{E}(\kappa)$.
        \item ${}^\kappa M_{E}\subseteq M_{E}$ and ${}^\kappa M_U\subseteq M_U$.
        \item $j_{E}''\kappa^+\subseteq [\text{id}]_U$ and $M_U\models |[\text{id}]_U|<j_{E}(\kappa)$.
    \end{enumerate}
\end{proposition}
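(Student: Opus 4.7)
The plan is to imitate the abstract embeddings $j$ and $i$ by ultrapower- and extender-ultrapower embeddings derived from the seed $A$. First, invoking the remark following Definition~\ref{Definition: non-Galvin cardinal}, fix $A\in M$ so that $\kappa\subseteq A$ and $\min(A\setminus\kappa)=i(\kappa)$. Since $\crit(k)=i(\kappa)$ forces $k(i(\kappa))=j(\kappa)>i(\kappa)$, the set $A$ has rank below $j(\kappa)$, and so $A\in V_{j(\kappa)}^M$. Define
\[
U:=\{X\subseteq V_\kappa : A\in j(X)\},
\]
the ultrafilter on $V_\kappa$ derived from $j$ at seed $A$. Standard derived-ultrafilter theory gives that $U$ is $\kappa$-complete with $\crit(j_U)=\kappa$, that ${}^\kappa M_U\subseteq M_U$ (transferred from ${}^\kappa M\subseteq M$), and that there is a factor embedding $\ell\colon M_U\to M$ satisfying $\ell\circ j_U=j$ and $\ell([\text{id}]_U)=A$.

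Working inside $M_U$, let $\rho:=\min([\text{id}]_U\setminus\kappa)$, so by elementarity of $\ell$ we have $\ell(\rho)=i(\kappa)$. Let $E^*$ be the $(\kappa,\rho)$-extender derived from $j_U$ using seeds drawn from $[\text{id}]_U$, with ultrapower $j_{E^*}\colon V\to M_{E^*}$ and factor $k_{E^*}\colon M_{E^*}\to M_U$. Clause (1), $k_{E^*}\circ j_{E^*}=j_U$, and clause (3), ${}^\kappa M_{E^*}\subseteq M_{E^*}$, are standard consequences of derived-extender theory and the closure of $M_U$. The first half of clause (2), $\crit(j_U)=\kappa$, is immediate; the identification $\rho=j_{E^*}(\kappa)=\crit(k_{E^*})$ follows from the fact that $\rho$ is the least new generator of $E^*$: ordinals strictly below $\rho$ are covered by expressions $j_U(f)(a)$ with $a\in [\text{id}]_U^{<\omega}$, while $\rho$ itself is not.

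For clause (4), elementarity of $\ell$ transfers $M\models |A|<i(\kappa)$ to $M_U\models |[\text{id}]_U|<\rho$, since $\ell$ sends $|[\text{id}]_U|^{M_U}$ to $|A|^M$ and $\rho$ to $i(\kappa)$. The inclusion $j_{E^*}''\kappa^+\subseteq [\text{id}]_U$ is the $M_U$-analogue of $i''\kappa^+\subseteq A$, and is verified by parallel reasoning: for each $\alpha<\kappa^+$, the ordinal $j_{E^*}(\alpha)$ is characterized in $M_{E^*}$ as the unique element mapped by $k_{E^*}$ to $j_U(\alpha)$, in exact parallel to the way $i(\alpha)\in N$ is characterized by $k(i(\alpha))=j(\alpha)$, and the correspondence under $\ell$ together with $i(\alpha)\in A$ yields $j_{E^*}(\alpha)\in [\text{id}]_U$.

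The main obstacle is the tight verification of clause (2), specifically pinning down $\crit(k_{E^*})=\rho$ on the nose. The inequality $\crit(k_{E^*})\le \rho$ is clear from $\rho$ being a generator; the reverse $\crit(k_{E^*})\ge \rho$ requires showing every ordinal in $[\kappa,\rho)$ is representable as $j_U(f)(a)$ for some $f\in V$ and some $a\in [\text{id}]_U^{<\omega}$. A delicate point is whether the choice of $A$ (respecting $\kappa\subseteq A$, $\min(A\setminus\kappa)=i(\kappa)$, and $|A|^M<i(\kappa)$) suffices to make $\rho$ a minimal generator; if not, one enlarges $A$ within the cardinality bound to close up the generators below $\rho$, without disrupting the conditions of Definition~\ref{Definition: non-Galvin cardinal}.
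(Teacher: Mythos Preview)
Your overall strategy is correct and matches the paper: derive $U$ from $j$ at the seed $A$, let $\rho$ be the preimage of $i(\kappa)$ under the factor map $\ell\colon M_U\to M$, and take $E^*$ to be the $(\kappa,\rho)$-extender of $j_U$. Where the proposal falls short is precisely at the two places you flagged as delicate.

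\textbf{Clause (2).} Standard derived-extender facts only give $\crit(k_{E^*})\ge\rho$ and $j_{E^*}(\kappa)\ge\rho$; you never establish $j_{E^*}(\kappa)\le\rho$. Your claim that $\rho$ is a ``minimal generator'' is misstated (the generators of the $(\kappa,\rho)$-extender are the ordinals \emph{below} $\rho$), and the proposed fix of enlarging $A$ does not address the issue: the problem is not a shortage of seeds, but ruling out that $E^*$ overshoots. The paper resolves this by first replacing $i$ with the extender embedding $j_E$, where $E$ is the $(\kappa,i(\kappa))$-extender derived from $j$. Since $\ell\circ j_U=j$, for each $\alpha<\rho$ one has $E^*_\alpha=E_{\ell(\alpha)}$, which yields an elementary $k^*\colon M_{E^*}\to M_E$, $k^*([f,a]_{E^*})=[f,\ell(a)]_E$, satisfying $j_E=k^*\circ j_{E^*}$ and $k_E\circ k^*=\ell\circ k_{E^*}$. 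If $j_{E^*}(\kappa)>\rho$ then $k^*(j_{E^*}(\kappa))>k^*(\rho)$, hence $j_E(\kappa)>k^*(\rho)$, so $k_E$ fixes $k^*(\rho)$; but then $k^*(\rho)=k_E(k^*(\rho))=\ell(k_{E^*}(\rho))\ge\ell(\rho)=j_E(\kappa)$, a contradiction. This commutative-square argument is the missing ingredient.

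\textbf{Clause (4).} Your ``parallel reasoning'' does not go through as written. What is needed is $\ell(j_{E^*}(\alpha))=i(\alpha)$ for each $\alpha<\kappa^+$, so that $i(\alpha)\in A=\ell([\text{id}]_U)$ reflects to $j_{E^*}(\alpha)\in[\text{id}]_U$. But there is no a priori reason $\ell$ should agree with $k^*$ (equivalently, with the ``$i$-side'') on ordinals in the interval $[\rho,j_{E^*}(\kappa^+))$. The paper proves this identity via canonical functions: for $\xi<(j_{E^*}(\kappa)^+)^{M_{E^*}}$, write $\xi=k_{E^*}(g_\xi)(\rho)$ with $g_\xi$ the canonical function for $\xi$ in $M_{E^*}$, and compute
\[
\ell(\xi)=\ell(k_{E^*}(g_\xi)(\rho))=k_E(k^*(g_\xi))(\ell(\rho))=k_E(g_{k^*(\xi)})(i(\kappa))=k^*(\xi),
\]
using Proposition~\ref{Prop: Canonical functions} for both $k_{E^*}$ and $k_E$. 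This gives $\ell(j_{E^*}(\alpha))=k^*(j_{E^*}(\alpha))=j_E(\alpha)=i(\alpha)\in A$, as required. Without the auxiliary map $k^*$ and the canonical-function calculation, the ``correspondence under $\ell$'' you invoke is unjustified.
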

\begin{proof}
    We may assume \(\sup(A) = \sup i''\kappa^+\) and \(A\cap i(\kappa) = \kappa\).
    Let \(U\) be the ultrafilter derived from \(j\) using \(A\).
    Let \(\bar A = [\text{id}]_U\)
    and let \(k_U : M_U\to M\) be the unique elementary embedding with \(k_U\circ j_U = j\) and \(k_U(\bar A) = A\). Note that \(\kappa\)
    and \(i(\kappa)\) are in the range of \(k_U\) since these ordinals are definable in \(M\) using \(A\)
    as a parameter: \(\kappa\) is the least ordinal not in \(A\), and
    \(i(\kappa) = |\sup(A)|^M\).
    Therefore \(k_U(\kappa) = \kappa\). Let \(\rho\) be such that \(k_U(\rho) = i(\kappa)\).
    
    Let \(E\) be the extender of length \(\rho\) derived from \(j_U\). Let \(k_E : M_E\to M_U\) denote the unique factor embedding with
    \(k_E \circ j_E = j_U\) and
    \(k_E\restriction \rho = \text{id}.\)

\[\begin{tikzcd}
	& {M_E} & {M_U} \\
	V \\
	& N & M
	\arrow["j_E", from=2-1, to=1-2]
	\arrow["{k_E}", from=1-2, to=1-3]
	\arrow["i"', from=2-1, to=3-2]
	\arrow["k"', from=3-2, to=3-3]
	\arrow["j"', bend right=60, from=2-1, to=3-3]
	\arrow["{j_U}", bend right=-60, from=2-1, to=1-3]
	\arrow["{k_U}", from=1-3, to=3-3]
\end{tikzcd}\]

    We will verify (1), (2), (3), and (4). Of course, (1) is true essentially by the definition of \(k_E\).
    
    For (2), note that \(\crit(j_U) = \kappa\) since \(k_U\circ j_U = j\) and \(k_U(\kappa) = \kappa\). 
    The fact that \(\crit(k_E) \geq \rho\) follows from the definition of \(k_E\). To see \(\crit(k_E) = \rho\) and \(k_E(\rho) = j_U(\kappa)\), we will show that\footnote{For a model $M$, $Hull^M(A)$ denotes the usual closure of the class $A\subseteq M$ under the Skolem functions of $M$, which in the case of an ultrapower simplifies to $Hull^{M_U}(A)=\{j_U(f)(\xi)\mid f:\kappa\rightarrow V, \xi\in A\}$.} \[\text{Hull}^{M_U}(j_U''V\cup \rho)\cap j_U(\kappa) = \rho\]
    This will establish that \(\crit(k_E) =\rho\) and \(k_E(\rho) = j_U(\kappa)\) since \(k_E\) is the
    inverse of the transitive collapse of \(\text{Hull}^{M_U}(j_U''V\cup \rho)\). 
    To prove this equality, it suffices to show the inclusion
    \(\text{Hull}^{M_U}(j_U''V\cup \rho)\cap j_U(\kappa)\subseteq \rho\).

    Since \(k\circ i=j\) and since $\crit(k)=i(\kappa)$, we have
    \(\text{Hull}^M(j''V\cup i(\kappa))\subseteq k''N\), and
    therefore \(\text{Hull}^M(j''V\cup i(\kappa))\cap j(\kappa) \subseteq k''N\cap j(\kappa) = i(\kappa)\). Since $$k_U''[\text{Hull}^{M_U}(j_U''V\cup \rho)]\subseteq \text{Hull}^M(j''V\cup i(\kappa)),$$ we have
    \[\text{Hull}^{M_U}(j_U''V\cup \rho) \subseteq k_U^{-1}[\text{Hull}^M(j''V\cup i(\kappa))]\] In particular,
    \begin{align*}\text{Hull}^{M_U}(j_U''V\cup \rho)\cap j_U(\kappa) &\subseteq k_U^{-1}[\text{Hull}^M(j''V\cup i(\kappa))\cap j(\kappa)] \\
    &= k_U^{-1}(i(\kappa)) = \rho\end{align*}

    Since \(k_E(\rho) = j_U(\kappa) > \rho\) and \(k_E \circ j_E = j_U\),
    it follows 
    \(\rho = j_E(\kappa)\). Note also that \(\rho < j_U(\kappa)\), and so the fact that \(k_E(\rho) = j_U(\kappa)\) implies
    \(k_E(\rho) \neq \rho\)
    and hence \(\crit(k_E) = \rho\).

    For (3), the inner model \(M_U\) is closed under \(\kappa\)-sequences since it is the ultrapower of \(V\) by a \(\kappa\)-complete ultrafilter. The inner model \(M_E\) is closed under \(\kappa\)-sequences by Lemma \ref{lma:extender_closure} below, since 
    \(\cf(\rho) > \kappa\) and \({}^\kappa\rho\subseteq M_E\). To see that \(\cf(\rho) > \kappa\), note that \(\cf(i(\kappa)) > \kappa\) since \(N\) satisfies that \(i(\kappa)\) is regular and \(N\) is closed under \(\kappa\)-sequences. Therefore \(M\) satisfies that \(\cf(i(\kappa)) > \kappa\).
    By the elementarity of \(k_U\), and since \(k_U(\rho) = i(\kappa)\), \(M_U\) satisfies \(\cf(\rho) > \kappa\). Here we use that \(k_U(\kappa) = \kappa\).

    Finally, we verify (4). By elementarity of \(k_U\), since \(|A| < i(\kappa)\), we have \(|\bar A| < \rho\). So we just have to show that
    \(j_E''\kappa^+\subseteq \bar A.\)
    Suppose \(\alpha\in j_E''\kappa^+.\) We claim that \(k_U(\alpha)\in \text{ran}(i)\). 
    Let \({\preceq}\) be a wellorder of \(\kappa\) of ordertype
    \(j_E^{-1}(\alpha)\).
    Then \(j_E({\preceq})\) has ordertype \(\alpha\). Note that \[k_U(j_E({\preceq})) = k_U(j_U({\preceq})\cap \rho) = j({\preceq})\cap i(\kappa) = i({\preceq})\]
    Thus \(k_U(\alpha)\), which is the ordertype of \(k_U(j_E({\preceq}))\) is equal to the ordertype of \(i({\preceq})\), which is in the range of \(i\).
    It follows that \((k_U\circ j_E)''\kappa^+\subseteq i''\kappa^+\subseteq A\).
    Since $k_U(\bar{A}) = A$, we conclude that \(j_E''\kappa^+\subseteq k_U^{-1}[A]\subseteq\bar A\).
\end{proof}
The proof of the following lemma, which was cited in the previous proposition, appears in \cite[Lemma 2.9]{HamkinsTall}:
\begin{lemma}\label{lma:extender_closure}
    Suppose \(E\) is an extender of length \(\rho\) with \(\crit(j_E) = \kappa\).
    If \({}^\kappa\rho\subseteq M_E\), then \({}^\kappa M_E\subseteq M_E\).
   In particular, if \(E\) is the extender of length \(\rho\) derived from an elementary embedding \(j : V\to M\) where \({}^\kappa M\subseteq M\), \(\cf(\rho) > \kappa\), and \(M\vDash \rho^\kappa = \rho\), then \({}^\kappa M_E\subseteq M_E\).\qed
\end{lemma}

Let us turn to the proof of Main Theorem \ref{mthorem: non-Galvin implies existence of ultrafilter}:
\begin{theorem}\label{Theorem: Main 1}
    Suppose that $\kappa$ is a non-Galvin cardinal. Then there exists a $\kappa$-complete ultrafilter $U$ over $\kappa$ such that $\neg \Gal{U,\kappa,\kappa^+}$. In particular, if $2^\kappa=\kappa^+$ then $U$ is non-Galvin.
\end{theorem}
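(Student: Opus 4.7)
The plan is to invoke the preceding proposition to extract a $\kappa$-complete ultrafilter $U$ on $V_\kappa$ together with the extender embedding $j_{E^*}\colon V\to M_{E^*}$ and factor map $k_{E^*}\colon M_{E^*}\to M_U$ satisfying $k_{E^*}\circ j_{E^*}=j_U$, $\crit(k_{E^*})=\rho=j_{E^*}(\kappa)$, $|[\text{id}]_U|^{M_U}<\rho$, and $j_{E^*}{}''\kappa^+\subseteq[\text{id}]_U$. A bijection $V_\kappa\leftrightarrow\kappa$ will transfer the final conclusion to an ultrafilter on $\kappa$. I will work with functions $h,m\colon V_\kappa\to\kappa$ chosen so that $[h]_U=\rho$ and $m(x)=|x|$, whence $[m]_U=\mu:=|[\text{id}]_U|^{M_U}$.

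Two observations drive everything. First, because $\mu<\rho=\crit(k_{E^*})$, for any $g\colon\kappa\to\kappa$ we have $j_U(g)(\mu)=k_{E^*}(j_{E^*}(g))(\mu)=k_{E^*}(j_{E^*}(g)(\mu))=j_{E^*}(g)(\mu)<\rho$, since $j_{E^*}(g)\colon\rho\to\rho$ and the value stays below $\crit(k_{E^*})$. Second, applying Proposition~\ref{Prop: Canonical functions} to $k_{E^*}$ at the ordinal $j_{E^*}(\alpha)<j_{E^*}(\kappa^+)=(\rho^+)^{M_{E^*}}$, together with the elementarity identity $f^{M_{E^*}}_{j_{E^*}(\alpha)}=j_{E^*}(f_\alpha)$, yields the crucial equality $j_U(f_\alpha)(\rho)=k_{E^*}(j_{E^*}(f_\alpha))(\rho)=j_{E^*}(\alpha)$ for every $\alpha<\kappa^+$. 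In representative form, $[f_\alpha\circ h]_U=j_{E^*}(\alpha)\in[\text{id}]_U$, so the sets $B_\alpha:=\{x\in V_\kappa:f_\alpha(h(x))\in x\}$ all lie in $U$.

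Suppose toward contradiction that some $I\in[\kappa^+]^\kappa$ satisfies $B_I:=\bigcap_{\alpha\in I}B_\alpha\in U$. I will then run the sky argument mirroring Theorem~\ref{Theorem: diamond minus implies non-Galvin}: if for every $\theta<\kappa$ the quantity $g(\theta):=\sup\{h(x):x\in B_I,\,m(x)\leq\theta\}$ were below $\kappa$, this would define a function $g\colon\kappa\to\kappa$ satisfying $h(x)\leq g(m(x))$ on $B_I$, which ultrapowers to $\rho\leq j_U(g)(\mu)$, contradicting the first observation. Hence some $\theta<\kappa$ achieves $\sup\{h(x):x\in B_I,\,m(x)\leq\theta\}=\kappa$.

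To finish, pick any $\theta^+$ ordinals $\alpha_0<\cdots<\alpha_i<\cdots$ $(i<\theta^+)$ from $I$. Since the canonical functions are $<^*$-increasing, each pair $i<j<\theta^+$ has some $\xi_{ij}<\kappa$ beyond which $f_{\alpha_i}<f_{\alpha_j}$ pointwise; by regularity of $\kappa$ the supremum $\xi^*:=\sup_{i<j<\theta^+}\xi_{ij}$ stays below $\kappa$. Choosing $x\in B_I$ with $m(x)\leq\theta$ and $h(x)\geq\xi^*$ forces $\{f_{\alpha_i}(h(x)):i<\theta^+\}$ to be a strictly increasing sequence of $\theta^+$ elements of $x$, contradicting $|x|=m(x)\leq\theta<\theta^+$. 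Transferring via a bijection $V_\kappa\to\kappa$ then delivers the desired $\kappa$-complete ultrafilter on $\kappa$, and when $2^\kappa=\kappa^+$ the conclusion $\neg\text{Gal}(U,\kappa,\kappa^+)$ coincides with non-Galvinness. I expect the main technical obstacle to be establishing the identity $j_U(f_\alpha)(\rho)=j_{E^*}(\alpha)$; once that is in hand, the remainder is a direct adaptation of the $\diamondsuit^-_{\text{thin}}$ technology to the $\kappa^+$-sized indexing case.
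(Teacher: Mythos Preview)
Your argument is correct. The two observations you single out are exactly right: the first is the sky inequality $j_U(g)(\mu)<\rho$ for all $g:\kappa\to\kappa$, and the second, $j_U(f_\alpha)(\rho)=j_{E^*}(\alpha)$, is precisely the application of Proposition~\ref{Prop: Canonical functions} to $k_{E^*}$ at $j_{E^*}(\alpha)<(\rho^+)^{M_{E^*}}$. The counting endgame then goes through since $\kappa$ is inaccessible, so $\theta^+<\kappa$ and the sup $\xi^*$ over the $(\theta^+)^2$-many bounds stays below $\kappa$.

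The route, however, differs from the paper's. The paper does not pass through the normalizing proposition at all; it works directly with the embeddings $i,j,k$ and the set $A$ from Definition~\ref{Definition: non-Galvin cardinal}, deriving $U=D(j,\nu)$ from $j$ and a single ordinal $\nu$ chosen so that both $A$ and $i(\kappa)$ are coded at $\nu$. The sets $B_\xi$ are the same as yours, but the contradiction is obtained by a cleaner elementarity trick: assuming $\bigcap_{\xi\in\sigma}B_\xi\in U$, one shows $i(\sigma)\subseteq A$ directly (using $k(g_\xi)(i(\kappa))=\xi$ for the $N$-canonical functions $g_\xi$), whence $\text{ot}(i(\sigma))<i(\kappa)$ and elementarity gives $|\sigma|<\kappa$. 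No sky argument or pigeonhole counting is needed. Your approach, by contrast, is a faithful transplant of the $\diamondsuit^-_{\text{thin}}$ machinery of Theorem~\ref{Theorem: diamond minus implies non-Galvin} to a $\kappa^+$-indexed family, which has the merit of making the connection to that technology explicit; the paper's argument is shorter and more self-contained but hides that structural link.
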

\begin{proof}
    We use the notation of 
    \ref{Definition: non-Galvin cardinal}. As before, we can fix
    an ordinal \(\nu<j(\kappa)\)
    such that for some
    sequence \(\vec{A} = \langle A_\alpha\rangle_{\alpha < \kappa}\)
    such that \(A = j(\vec{A})_\nu\)
    and for some sequence 
    \(\vec{\kappa} = \langle \kappa_\alpha\rangle_{\alpha < \kappa}\),
    \(i(\kappa) = j(\vec{\kappa})_\nu.\)
    Let \(U=D(j,\nu)\) be the ultrafilter
    on \(\kappa\)
    derived from \(j\) using \(\nu\). Since $\crit(j)=\kappa$, $U$ is a $\kappa$-complete ultrafilter over $\kappa$.
    We will show
    $\neg \Gal{U,\kappa,\kappa^+}$.

    Let \(\langle f_\xi\rangle_{\xi < \kappa^+}\) denote the sequence
    of canonical functions on
    \(\kappa\) (see definition \ref{Definition: Canonical functions}).
    For \(\xi < \kappa^+\),
    define \[B_\xi = 
    \{\alpha < \kappa : f_\xi(\kappa_\alpha)\in A_\alpha\}\]
    Note that
    \(B_\xi\in U\)
    since \[j(B_\xi) = 
    \{\alpha < j(\kappa) : j(f_\xi)(j(\vec \kappa)_\alpha) \in j(\vec A)_\alpha\}\] 
    and 
    \[j(f_\xi)(j(\vec \kappa)_\nu) = j(f_\xi)(i(\kappa)) = 
    k(i(f_\xi))(i(\kappa))
    = i(\xi)\in A=j(\vec{A})_\nu\]
    The point here is that in \(N\),
    \(\vec g = i(\vec f)\)
    is the sequence of canonical functions
    on \(i(\kappa)\), and since
    \(\crit(k) = i(\kappa)\), by proposition \ref{Prop: Canonical functions},
    for any \(\eta < i(\kappa^+)\),
    \(k(g_\eta)(i(\kappa)) = \eta\).
    The fact that
    \(k(i(f_\xi))(i(\kappa))
    = i(\xi)\) follows from this observation
    when \(\eta = i(\xi)\)
    (and thus \(i(f_\xi) = g_{i(\xi)}\)).

    Suppose \(\sigma\subseteq \kappa^+\)
    and \(\bigcap_{\xi \in \sigma} B_\xi\in U\).
    We must show that \(|\sigma| < \kappa\). Since \(|A|^M < i(\kappa)\), it suffices to show that \(i(\sigma)\subseteq A\):
    then \(\text{ot}(i(\sigma)) <
    \text{ot}(A) <
    i(\kappa)\), and hence
    \(N\vDash \text{ot}(i(\sigma)) <
    i(\kappa)\), which by elementarity implies \(\text{ot}(\sigma) < \kappa\).
    
    The proof that
    \(i(\sigma)\subseteq A\) 
    is similar to the calculation 
    in the previous paragraph:
    Since \(\bigcap_{\xi \in \sigma} B_\xi\in U\),
    for all \(\eta \in j(\sigma)\),
    \(j(\vec f)_\eta(i(\kappa)) \in A\).
    Fix
    \(\xi \in i(\sigma)\), and we
    will prove that \(\xi\in A\).
    We have \(k(\xi)\in j(\sigma)\),
    so \(j(\vec f)_{k(\xi)}(i(\kappa))\in A\). But \(j(\vec f)_{k(\xi)} =
    k(g_\xi)\), hence
    \(k(g_\xi)(i(\kappa)) = \xi\). 
    It follows that \(\xi\in A\).
\end{proof}
\begin{remark}
    Note that in condition $(4)$ the definition \ref{Definition: non-Galvin cardinal} of non-Galvin cardinal it is important to work with $\kappa^+$ instead of $2^\kappa$ for there are no canonical functions in general up to $2^\kappa$. \end{remark}
\begin{remark}
    As proven in \cite{TomNatasha}, if $\kappa$ is $\kappa$-compact then there are $2^{2^\kappa}$-many \(\kappa\)-complete non-Galvin ultrafilters that extend the closed unbounded filter on \(\kappa\). On the other hand, assuming the Ultrapower Axiom and that every irreducible ultrafilter is Dodd sound, the least non-Galvin cardinal carries a unique non-Galvin ultrafilter that extends the closed unbounded filter on \(\kappa\).
    Under these assumptions, if \(\kappa\) carries distinct non-Galvin ultrafilters extending the closed unbounded filter, then the Ketonen least distinct such ultrafilters are precisely the least two extensions of the closed unbounded filter concentrating on singular cardinals
    (see the proof of Theorem \ref{Theorm: Main 4}). These ultrafilters are
    irreducible (and in fact are Mitchell points) by \cite[Corollary 8.2.13, Proposition 8.3.39]{GoldbergUA}. Therefore \(D_0 \lhd D_1\), so
    \(\kappa\) carries a non-Galvin ultrafilter in \(\text{Ult}(V,D_1)\), and so 
    \(\kappa\) is not the least non-Galvin cardinal.
    
\end{remark}

As a first upper bound for the non-Galvin cardinals we have the following:
\begin{theorem}\label{Theorem: Compact implies non-Galvin}
    If $\kappa$ is \(\kappa\)-compact, then
    \(\kappa\) is a non-Galvin cardinal.
\end{theorem}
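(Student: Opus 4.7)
The plan is to apply the $\kappa$-filter extension property to a carefully designed $\kappa$-complete filter on $\kappa$ so that the extended ultrafilter $W$ produces an ultrapower embedding $j = j_W\colon V\to M$ fitting the required factorization with a normal ultrapower $i\colon V\to N$. Begin by fixing a normal $\kappa$-complete ultrafilter $U_0$ on $\kappa$ and setting $i = j_{U_0}$; then $\crit(i) = \kappa$, ${}^\kappa N \subseteq N$, and $i(\kappa) > \kappa^+ = (\kappa^+)^N$. The task reduces to constructing $j\colon V\to M$ and a factor map $k\colon N\to M$ satisfying $k\circ i = j$, $\crit(k) = i(\kappa)$, together with a covering set $A\in M$ of $M$-cardinality $< i(\kappa)$ containing $i''\kappa^+$.

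Let $\langle f_\xi : \xi<\kappa^+\rangle$ denote the canonical functions, so that $i(f_\xi)(\kappa) = \xi$ by Proposition \ref{Prop: Canonical functions}. Fix a sequence $\langle S_\alpha : \alpha<\kappa\rangle$ of sets of ordinals chosen so that $[\alpha\mapsto S_\alpha]_{U_0}$ is an $N$-set of $N$-cardinality strictly below $i(\kappa)$, and consider the filter $F$ on $\kappa$ generated by $U_0$ together with the sets
\[
B_\xi = \{\alpha<\kappa : f_\xi(\alpha)\in S_\alpha\}, \qquad \xi<\kappa^+.
\]
A short verification shows that $F$ is a proper $\kappa$-complete filter provided the $S_\alpha$ are chosen generously enough (say with $|S_\alpha|$ comparable to $\alpha^+$ under a uniform enumeration), since intersections of fewer than $\kappa$ of the $B_\xi$ remain $U_0$-positive by the regularity of $\kappa$ and the canonical-function structure.

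By $\kappa$-compactness extend $F$ to a $\kappa$-complete ultrafilter $W$ on $\kappa$, and set $j = j_W$. The inclusion $U_0\subseteq W$ yields a natural factor map $k\colon N\to M$ with $k\circ i = j$, and the generators $B_\xi$ force $\crit(k) = i(\kappa)$ together with ${}^\kappa M\subseteq M$. Set $A = [\alpha\mapsto S_\alpha]_W\in M$. From $B_\xi\in W$ and the canonical-function identity $i(f_\xi)(\kappa)=\xi$, each ordinal $i(\xi)$ for $\xi<\kappa^+$ lies in $A$ via the factoring through $k$, giving $i''\kappa^+\subseteq A$; and $|A|^M < i(\kappa)$ is inherited from the choice of $\langle S_\alpha\rangle$ via the factor map, since $\crit(k)=i(\kappa)$ means $k$ fixes any cardinal bound below $i(\kappa)$.

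The main obstacle is the simultaneous control of (a) the critical point of $k$, which must be exactly $i(\kappa)$ rather than larger, and (b) the identification of $A$ with a set that genuinely contains $i''\kappa^+$ rather than merely the shifted image $j''\kappa^+$. Both are sensitive to the precise choice of the guessing sequence $\langle S_\alpha\rangle$: it must be rich enough to capture the values of all canonical functions $f_\xi$ under $U_0$, yet small enough in $N$ to keep $|A|^M < i(\kappa)$. The delicate combinatorics here is the technical heart of the argument, and leverages both the regularity of $\kappa$ and the fact that $\kappa$-compactness allows the extension of filters generated by $\kappa^+$-many sets beyond the reach of mere measurability.
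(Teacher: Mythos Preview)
Your plan captures the right instinct---use \(\kappa\)-compactness to extend a filter encoding the canonical functions---but as written it has a structural gap that you yourself flag at the end without resolving. Taking \(i=j_{U_0}\) for a normal \(U_0\) and then hoping that the factor map \(k\colon M_{U_0}\to M_W\) has \(\crit(k)=i(\kappa)\) does not come for free. First, the inclusion \(U_0\subseteq W\) does not by itself give a factor map: you need \(U_0=D(j_W,\kappa)\), and nothing in your filter \(F\) forces this particular normal measure to be the one derived from \(j_W\) at \(\kappa\). Second, even granting the factor map, its critical point could land anywhere in \((\kappa,i(\kappa)]\); the sets \(B_\xi\) only tell you that \(j_W(f_\xi)([\text{id}]_W)\in A\), whereas to get \(i(\xi)\in A\) via the canonical-function identity you would need \(j_W(f_\xi)(i(\kappa))\in A\). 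Bridging \([\text{id}]_W\) and \(i(\kappa)\) is exactly the missing piece, and your sketch does not supply it. Finally, the sequence \(\langle S_\alpha\rangle\) is never pinned down; with the natural choice \(S_\alpha=\alpha^+\) one finds \(B_\xi\in U_0\) already, so \(F=U_0\) and no compactness is used at all.

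The paper avoids these difficulties by running the construction in the opposite direction. Rather than fixing a normal ultrapower for \(i\) and searching for a compatible \(j\), it first builds \(j\) as a two-step iteration \(j_{\mathcal W}\circ j_U\), where \(\mathcal W\) is a \(\kappa\)-complete fine ultrafilter on \(P_\kappa(\kappa^+)\) obtained from Hayut's characterization of \(\kappa\)-compactness; the seed \([\text{id}]_{\mathcal W}\) is then automatically a small covering set for \(j_{\mathcal W}{}''\kappa^+\). The embedding \(i\) is taken to be the \((\kappa,\lambda)\)-extender ultrapower derived from \(j\) with \(\lambda=j_{\mathcal W}(\kappa)\), which forces \(\crit(k)=i(\kappa)=\lambda\) by the standard derived-extender argument. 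A separate canonical-function computation then shows \(i(\alpha)=j_{\mathcal W}(\alpha)\) for \(\alpha<\kappa^+\), yielding \(i''\kappa^+\subseteq A\). The moral is that letting \(i\) be an extender embedding derived from \(j\), rather than a pre-chosen normal ultrapower, is what makes the critical-point and covering requirements line up simultaneously.
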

\begin{proof}
    Let $U$ be a normal ultrafilter on $\kappa$.
     Since \(|P^{M_U}(P_\kappa(\kappa^+))| = 
    2^\kappa\), there is a transitive model $M$ with $$P^{M_U}(P_\kappa(\kappa^+))\subseteq M, \ |M|=2^\kappa$$ By Hayut's result \cite[Cor. 6]{YairSquare}, there is a transitive model $N$, an elementary embedding $j_0\colon M\rightarrow N$, with $\crit(j_0)=\kappa$ along with some $s\in N$, $s\subseteq j_0(\kappa)^+$ such that $j_0''\kappa^+\subseteq s$ with $|s|^N<j_0(\kappa)$. Define $\mathcal{W}$ the $\kappa$-complete ultrafilter on $P_\kappa(\kappa^+)$ derived from $j_0$ and $s$. Note that $\mathcal{W}$ is fine since $j_0''\kappa^+\subseteq s$ and it measures all the subsets of $P_\kappa(\kappa^+)$ in $M_U$. Let $j_{\mathcal{W}}\colon M_U\rightarrow M_{\mathcal{W}}$ be the ultrapower of $M_U$ by $\mathcal{W}$ defined in $V$, and $j\colon  V\to M_{\mathcal{W}}$ be the embedding $j = j_{\mathcal {W}}\circ j_U$. 
    Let $\lambda = j_\mathcal W(\kappa)<j(\kappa)$ and
    let $E$ be the extender of length $\lambda$ derived
    from $j$.
    \begin{claim}
        $E$ is also the extender of length $\lambda$ derived from $j_{\mathcal{W}}$
    \end{claim}
    \begin{proof}
        For any  $X\subseteq \kappa$ we have that $$ j(X)\cap \lambda=j_{\mathcal{W}}(j_U(X))\cap j_{\mathcal{W}}(\kappa)=j_{\mathcal{W}}(j_U(X)\cap\kappa)=j_{\mathcal{W}}(X).$$
        Thus for all $\alpha<\lambda$, $\alpha\in j(X)$ iff $\alpha \in j_{\mathcal{W}}(X)$.
    \end{proof}
    Finally let $i\colon V\to N_E$ be the ultrapower of $V$ by $E$ and $A = [\text{id}]_\mathcal W\in M_{\mathcal{W}}$. We claim that $i,j,A$ witness that $\kappa$ is a non-Galvin cardinal. Indeed, $i(\kappa)\geq\lambda$. To see that $i(\kappa)\leq \lambda$, we compute the ultrapower $i'$ of $M_U$ by $E$, and since $M_U$ is closed under $\kappa$-sequences, it follows that $i(\kappa)=i'(\kappa)$. By the previous claim, $j_{\mathcal{W}}$ also factors through $i'$ and thus $j_{\mathcal{W}}(\kappa)=k'(i'(\kappa))\geq i'(\kappa)=i(\kappa)$, as wanted. 

    By the usual argument about the derived extender, the factor map
    $k\colon N_E\rightarrow M_{\mathcal{W}}$ has critical point $i(\kappa)$ (see for example \cite[Lemma 20.29(ii)]{Jech2003}). Also, $M_{\mathcal{W}}\models |A|<j_{\mathcal{W}}(\kappa)=i(\kappa)$ and since $\mathcal{W}$ is fine, $j_{\mathcal{W}}''\kappa^+\subseteq A$. 
    \begin{claim}
        For every $\alpha<\kappa^+$,  $i(\alpha)=j_{\mathcal{W}}(\alpha)$.
    \end{claim}
    \begin{proof}
        
    Note that $i(U)\in N_E$ is a normal measure on $i(\kappa)$, let $X\in i(U)$ be any set, $k(X)\in j(U)=j_{\mathcal{W}}(j_U(U))$. Note that $j_{\mathcal{W}}(j_U(U))$ is generated by $j_{\mathcal{W}}'' j_U(U)$
    by Theorem 6 and Corollary 8 of \cite[Section 3]{Blass1970}.
    Therefore there is a set $Y\in j_U(U)$ such that $j_{\mathcal{W}}(Y)\subseteq k(X)$. Since $U$ is normal, there is a set $A\in U$ such that $j_U(A)\subseteq^* Y$ and $j(A)\subseteq^* k(X)$, which in turn implies that $i(A)\subseteq^* X$. Now we note that $i(A)\in R$, where $R$ is the $N_E$-ultrafilter (external) derived from $k$ and $j_{\mathcal{W}}(\kappa)$. We conclude that $i(U)\subseteq R$ and thus that $i(U)=R$ (as two $N_E$-ultrafilters). So $k$ factors through $j_{i(U)}$ and $k'\colon M_{i(U)}\rightarrow M_{\mathcal{W}}$ has critical point $>j_{\mathcal{W}}(\kappa)$ (since $k'(j_{\mathcal{W}}(\kappa))=k'([\text{id}]_{i(U)})=k(id)(j_{\mathcal{W}}(\kappa))=j_{\mathcal{W}}(\kappa)$).
    To conclude the claim, let $\alpha<\kappa^+$ and $f\colon \kappa\rightarrow\kappa$ be the canonical function such that $j_U(f)(\kappa)=\alpha$, then $$j_{\mathcal{W}}(\alpha)=j_{\mathcal{W}}(j_U(f)(\kappa))=j(f)(j_{\mathcal{W}}(\kappa))=k(i(f))(j_{\mathcal{W}}(\kappa))$$
    By elementarity, $i(f)\colon i(\kappa)\rightarrow i(\kappa)$ is the canonical function for $i(\alpha)$. Since $j_{i(U)}$ is the ultrapower by a normal ultrafilter over $j_{\mathcal{W}}(\kappa)$, we conclude that $$k(i(f))(j_{\mathcal{W}}(f))=k'(j_{i(U)}(i(f)))(j_{\mathcal{W}}(\kappa))=k'(i(\alpha))=i(\alpha)$$ as desired.
    \end{proof}
\end{proof}
\(\kappa\) is superstrong with an inaccessible target (which simply means that there is an elementary \(j : V\to M\) such that \(\crit(j) = \kappa\), \(V_{j(\kappa)}\subseteq M\), and
\(j(\kappa)\) is inaccessible in \(V\)),
then by the argument of \ref{Theorem: Partial sound}, \(\kappa\) is a non-Galvin cardinal.
Moreover, any subcompact cardinal is a limit of 
cardinals that are superstrong with an inaccessible target.

Hayut proved \cite{YairSquare} that $\kappa^+$-$\Pi^1_1$-subcompactness implies $\kappa$-compactness and he conjectures that these notions are equiconsistent\footnote{Since by the results of \cite{NeemanSteelSubcompact}, if there is a weakly iterable premouse with a $\kappa$-compact cardinal then in that inner model $\kappa$ is also $\kappa^+$-$\Pi^1_1$-subcompact cardinal.}. 
So morally speaking, $\kappa$-compact cardinals should be strictly greater than non-Galvin ultrafilters in the large cardinal hierarchy. In the next section, we will see that at least under UA this is the case. 
Finally, we establish the connection between Dodd soundness and non-Galvin cardinals: \begin{lemma}\label{Lemma: Soundness implies non-Galvin cardinal}
    Suppose that $U$ is a $\kappa$-complete non $p$-point $\lambda$-sound ultrafilter, and let $E$ be the $(\kappa,\lambda)$-extender derived from $j_U$ and $\lambda=\sup\{j_U(f)(\kappa)\mid f\colon\kappa\rightarrow\kappa\}$. Then $j_E''2^\kappa\in M_U$ and moreover $j_U,j_E,k_E$ and $j_E''2^\kappa$ witness that $\kappa$ is a non-Galvin cardinal.
\end{lemma}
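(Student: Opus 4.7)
The plan is to verify the four conditions of Definition~\ref{Definition: non-Galvin cardinal} for the triple $(j_U,\,j_E,\,k_E)$ with witness set $A=j_E''2^\kappa$. The commutativity $k_E\circ j_E=j_U$ is the defining property of the factor map of the derived extender; since $\lambda$ is chosen as the supremum of the first sky $\{j_U(f)(\kappa):f\colon\kappa\to\kappa\}$, the standard theory of derived extenders gives $j_E(\kappa)=\lambda$, hence $\crit(k_E)=\lambda$, while $\crit(j_U)=\kappa$ by hypothesis. Both $M_U$ and $M_E$ are closed under $\kappa$-sequences because $U$, and therefore each coordinate ultrafilter $E_a$, is $\kappa$-complete.

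The real work is the fourth condition. The inclusion $j_E''\kappa^+\subseteq j_E''2^\kappa$ is immediate. For the cardinality bound $|j_E''2^\kappa|^{M_U}<\lambda$, one first observes $|j_E''2^\kappa|\le 2^\kappa$ in $V$, and this transfers to $M_U$ since $\kappa$-completeness of $U$ gives $P(\kappa)^V=P(\kappa)^{M_U}$. The strict inequality $2^\kappa<\lambda$ inside $M_U$ is where the two hypotheses combine: $\lambda$-soundness exhibits inside $M_U$ the family $\{j_U(S)\cap\lambda:S\subseteq\kappa\}$, of $M_U$-cardinality $2^\kappa$ and contained in $P(\lambda)^{M_U}$, while non-$p$-pointness, via the sky analysis of the proof of Lemma~\ref{EquivDoddSound} and Proposition~\ref{Proposition: diamond if and only if Dodd sound}, rules out any function $\kappa\to\kappa$ that would push this $2^\kappa$-sized class up to $\lambda$; combined, these force $M_U\models 2^\kappa<\lambda$.

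For the membership $j_E''2^\kappa\in M_U$, the strategy is to use $\lambda$-soundness to recover the factor map $k_E$ internally in $M_U$. The standard equivalence from the soundness theory says that $\lambda$-soundness of $U$ is what makes the range of $k_E$, namely $\{j_U(f)(a):f\in V,\ a\in[\lambda]^{<\omega}\}$, into an element of $M_U$; from this the map $k_E$ itself and its inverse on its range become definable inside $M_U$. Granted $k_E\in M_U$, the set $j_E''2^\kappa$ is exhibited in $M_U$ as the preimage under $k_E$ of $j_U''2^\kappa$, and the latter image is in turn definable inside $M_U$ from the soundness function $h=j_U^\lambda$ together with a fixed bijection $2^\kappa\to P(\kappa)$ in $V$ (which lies in $M_U$ because $V_{\kappa+1}\subseteq M_U$).

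The main obstacle I expect is precisely this last step: verifying that the weak-looking hypothesis ``$j_U^\lambda\in M_U$'' really does place $k_E$, and hence the set $j_E''2^\kappa$, inside $M_U$. This is the technical heart of the soundness machinery and is the only place where $\lambda$-soundness (as opposed to mere ``$\lambda$ lying below the sky of $[\mathrm{id}]_U$'') is essential; the remaining items in the verification of Definition~\ref{Definition: non-Galvin cardinal} are routine bookkeeping once the internal factor map is in hand.
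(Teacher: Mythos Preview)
Your verification of conditions (1)--(3) of Definition~\ref{Definition: non-Galvin cardinal} and the easy parts of (4) is fine, and your bound $|j_E''2^\kappa|^{M_U}\le (2^\kappa)^{M_U}=2^\kappa<\lambda$ works (the last inequality needs only that $\alpha\mapsto 2^\alpha+1$ is a function $\kappa\to\kappa$, not the full non-$p$-point hypothesis).

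The gap is in your argument for $j_E''2^\kappa\in M_U$. You assert that a bijection $b\colon 2^\kappa\to P(\kappa)$ ``lies in $M_U$ because $V_{\kappa+1}\subseteq M_U$.'' But such a $b$ is \emph{not} an element of $V_{\kappa+1}$: its graph consists of pairs $(\alpha,S)$ with $S\in V_{\kappa+1}\setminus V_\kappa$, so $b\subseteq V_{\kappa+2}$ and $b\in V_{\kappa+3}$ at best. Closure of $M_U$ under $\kappa$-sequences gives you $V_{\kappa+1}\subseteq M_U$ and nothing more; there is no reason an arbitrary well-ordering of $P(\kappa)$ in $V$ should be in $M_U$. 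Without $b\in M_U$ your chain $j_U(\alpha)=j_U(b)^{-1}(k_E(j_U^\lambda(b(\alpha))))$ cannot be carried out internally, and the attempt to recover $j_U''2^\kappa$ inside $M_U$ collapses. (Passing to a bijection $b'\in M_U$ does not help directly, since you then need $j_U\restriction P(\kappa)$ inside $M_U$, not just $j_U^\lambda$.) Similarly, your claim that $\lambda$-soundness places the \emph{range of $k_E$} in $M_U$ is ill-posed: that range is a proper class.

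The paper's route avoids this detour entirely. From $\lambda$-soundness one deduces $E\in M_U$ (the map $j_U^\lambda$ together with a coding bijection $\kappa\to[\kappa]^{<\omega}$, which \emph{does} lie in $V_{\kappa+1}$, suffices to compute each $E_a$). Now $M_U$ can form its own extender ultrapower $(j_E)^{M_U}$. Since $M_U$ is closed under $\kappa$-sequences, $V$ and $M_U$ have exactly the same functions $[\kappa]^{<\omega}\to\mathrm{On}$, and both $j_E\restriction\mathrm{On}$ and $(j_E)^{M_U}\restriction\mathrm{On}$ are determined by these functions together with $E$. Hence $j_E\restriction\mathrm{On}=(j_E)^{M_U}\restriction\mathrm{On}$, and in particular $j_E''2^\kappa=(j_E)^{M_U}{}''2^\kappa\in M_U$. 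This sidesteps any need to place $j_U''2^\kappa$ or $k_E$ inside $M_U$.
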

\begin{proof}
    Derive the extender $E$ from $\lambda$ i.e $E=\l E_a\mid a\in[\lambda]^{<\omega}\r$ where $E_a$ is an ultrafilter over $[\kappa]^{|a|}$ defined by
    $$E_a=\{X\subseteq[\kappa]^{|a|}\mid a\in j(X)\}$$
    By $\lambda$-soundness of $U$, $E\in M_U$ and we let $i=j_E\colon M\rightarrow M_E$. Note that $j_E''P(\kappa)$ can be calculated in $M_U$ and therefore $j_E''P(\kappa)\in M_U$. Also, note that $j_E(\kappa)\geq\lambda$ and since $E\in M_U$, we must have that for every $a\in[\lambda]^{<\omega}$, $j_{E_a}(\kappa)<\lambda$ hence $j_E(\kappa)\leq\lambda$. We conclude that the critical point of the factor map $k_E\colon M_E\rightarrow M_U$ is $\lambda= j_E(\kappa)$. Finally, observe that $j_E''2^\kappa\in M_U$. To see this, simply note that $j_E\restriction On=(j_E)^{M_U}\restriction On$\footnote{This is since $M_U$ is closed under $\kappa$-sequences and thus the class of functions from $[\kappa]^{<\omega}$ to the ordinals is the same from the point of view of $V$ and $M_U$. Now both $j_E\restriction On$ and $(j_E)^{M_U}\restriction On$ are completely determined by those functions.} and therefore $j_E''2^\kappa=(j_E)^{M_U}{}''2^\kappa\in M_U$.
\end{proof}

\section{In the canonical inner models}
In this section, we work within the framework of $UA$ and ``every irreducible is Dodd sound." By results of Goldberg \cite{GoldbergUA} and Schlutzenberg \cite{Schlutz}, these assumptions hold in the  extender models $L[E]$.
Our first goal of this section is to prove Main Theorem \ref{mtheorem: characterization of sigma complete} regarding the characterization of $\sigma$-complete non-Galvin ultrafilters. To do that, we will need some preparatory results.   
\begin{theorem}[UA]\label{Theorem: irreducible non complete satisfy diamond}
    Suppose $\kappa$ is either successor or strongly inaccessible and
    \(U\) is a \(\kappa\)-irreducible non-$\kappa$-complete
    ultrafilter on \(\kappa\). Then \(\diamondsuit^{-}_{\text{thin}}(U)\).
\end{theorem}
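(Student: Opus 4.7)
The plan is to leverage the UA-factorization of $U$ together with the Dodd-soundness hypothesis, propagating $\diamondsuit^-_{\textnormal{thin}}$ upward via the closure Lemmas \ref{Lemma: Rudin-Keisler upword closure of diamond minus}, \ref{Lemma: discrete limit of diamond minus}, and \ref{Lemma: non discrete}. A preliminary observation is that $\kappa$-irreducibility forces $U$ to be uniform on $\kappa$: if $U$ were concentrated on some $\lambda < \kappa$, its trace on $\lambda$ would be a non-principal Rudin-Frolík predecessor of $U$ on an ordinal below $\kappa$, contradicting $\kappa$-irreducibility. In particular, $\delta := \operatorname{crit}(j_U)$ satisfies $\delta < \kappa$.

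Next I reduce to the irreducible case. Factor $U = \sum_D \langle U_\alpha \rangle$ with $D$ irreducible, as provided by UA. The $\kappa$-irreducibility of $U$ rules out non-principal $D$ on an ordinal strictly less than $\kappa$, so either $D$ is principal (and $U$ is Rudin-Keisler equivalent to a single fiber $U_\alpha$, to which I can pass by induction on sum depth) or $D$ is irreducible on $\kappa$. In the latter case $D$ is Dodd sound by the standing hypothesis; if $D$ is not a $p$-point, Lemma \ref{EquivDoddSound} gives $\diamondsuit^*_{\textnormal{thin}}(D)$, hence $\diamondsuit^-_{\textnormal{thin}}(D)$, and since $D\leq_{RK} U$, Lemma \ref{Lemma: Rudin-Keisler upword closure of diamond minus} transfers the property to $U$. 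If $D$ is a $\kappa$-complete $p$-point, I further analyze the fibers: since $\operatorname{crit}(j_D) = \kappa$ but $\operatorname{crit}(j_U) = \delta < \kappa$, the $M_D$-ultrafilter $[\alpha\mapsto U_\alpha]_D$ must be non-$\kappa$-complete in $M_D$, so $D$-almost every $U_\alpha$ is non-$\kappa$-complete, and I can apply the inductive hypothesis to a fiber (inheriting enough irreducibility structure in $M_D$) and then Lemma \ref{Lemma: non discrete} to conclude. Altogether this reduces to the case where $U$ itself is irreducible on $\kappa$.

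Now $U$ is irreducible and therefore Dodd sound. If $U$ is not a $p$-point, Lemma \ref{EquivDoddSound} again gives $\diamondsuit^*_{\textnormal{thin}}(U)$, whence $\diamondsuit^-_{\textnormal{thin}}(U)$ by taking $\lambda := [\textnormal{id}]_U$. Otherwise $U$ is a Dodd-sound irreducible non-$\kappa$-complete $p$-point on $\kappa$; set $\lambda := j_U(\delta) < j_U(\kappa)$ and
\[A := \{j_U(S) \cap \lambda : S \subseteq \kappa\} = j_U''P(\delta).\]
Since $A$ is an $M_U$-definable truncation of the Dodd-soundness set $\{j_U(S)\cap[\textnormal{id}]_U : S\subseteq\kappa\}\in M_U$, one has $A\in M_U$. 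The required sky condition then amounts to showing that $|A|^{M_U}$ lies strictly below the sky of $\lambda = j_U(\delta)$. This is where the dichotomy on $\kappa$ is essential: in the strongly inaccessible case, $2^\delta < \kappa$ together with the $p$-point property forces $|A|^{M_U}$ into a strictly lower sky than $j_U(\delta)$; in the successor case $\kappa = \mu^+$, the analogous bound on $|A|^{M_U}$ derived from UA-driven cardinal arithmetic near $\delta$ plays the same role. Corollary \ref{corollary: lambda sound and diamond minus} then delivers $\diamondsuit^-_{\textnormal{thin}}(U)$.

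The main obstacle is precisely this last sky-separation step: extracting from Dodd-soundness, the $p$-point property, and the cardinal arithmetic provided by UA that $|A|^{M_U}$ sits in a sky strictly below $\lambda$, uniformly in whether $\kappa$ is a successor or strongly inaccessible. A secondary difficulty is the inductive handling of the $\kappa$-complete $p$-point base $D$ in the reduction step, where the propagation of non-$\kappa$-completeness to a fiber must be done while preserving enough of the $\kappa$-irreducibility structure to invoke the inductive hypothesis.
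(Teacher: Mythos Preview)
Your approach diverges substantially from the paper's and has a genuine gap. The theorem is stated under UA alone; Dodd soundness of irreducibles is \emph{not} among its hypotheses, and the paper's proof never invokes it. Instead, the paper exploits two consequences of UA for $\kappa$-irreducible ultrafilters \cite[Theorems 8.2.22, 8.2.23]{GoldbergUA}: $M_U$ is closed under ${<}\kappa$-sequences, and every $\kappa$-sized subset of $M_U$ is covered by a set in $M_U$ of $M_U$-cardinality $\kappa$. The decisive move is to take $\lambda = \kappa_* := \sup j_U[\kappa]$. This ordinal is automatically closed under $j_U(f)$ for every $f:\kappa\to\kappa$, so condition~(2) of $\diamondsuit^-_{\text{thin}}$ holds for free once $|A|^{M_U} < \kappa_*$. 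For the witness $A$, the paper covers $j_U[P_\kappa(\kappa)]$ by some $B\in M_U$ of size at most $\kappa$ and closes under $\kappa$-unions in $M_U$; a short Fodor argument locates a strongly inaccessible $\gamma < \kappa$ with $j_U(\gamma) > \kappa$, bounding $|A|^{M_U}$ below $j_U(\gamma)\leq\kappa_*$.

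Your choice $\lambda = j_U(\delta)$ (with $\delta = \crit(j_U)$) and appeal to Corollary~\ref{corollary: lambda sound and diamond minus} requires $j_U(f)(\kappa) < j_U(\delta)$ for all $f:\kappa\to\kappa$, and you acknowledge this sky-separation as the ``main obstacle'' without resolving it. There is no evident reason it should hold: one needs first $j_U(\delta) > \kappa$, which is not automatic for $\delta = \crit(j_U)$, and even then nothing prevents some $j_U(f)(\kappa)$ from landing at or above $j_U(\delta)$. Your reduction to the irreducible case is also unnecessary (the paper works directly with $\kappa$-irreducibility via the covering property) and, as you note, has its own loose ends regarding how $\kappa$-irreducibility propagates to fibers. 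The missing idea is precisely the choice $\lambda = \sup j_U[\kappa]$ together with the covering property; once you have those, no factorization, Dodd soundness, or case analysis on $p$-pointness is needed.
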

    \begin{proof}
        By \cite[Theorems 8.2.22 and 8.2.23]{GoldbergUA},
        \(M_U\) is closed under \({<}\kappa\)-sequences and every \(A\in [M_U]^\kappa\)
        is covered by some \(B\in M_U\)
        such that \(|B|^{M_U} = \kappa\).
        By the assumptions of the theorem, \(U\) is not \(\kappa\)-complete and therefore $\crit(j_U)<\kappa$. Let $E^\kappa_\omega=\{\nu<\kappa\mid \cf(\nu)=\omega\}$,  define  the function $g:E^\kappa_\omega\rightarrow\kappa$ by
        $g(\nu)=\rho$ for the minimal measurable cardinal $\rho$ such that $j_U(\rho)>\nu$.  By \cite[Lemma 4.2.36]{GoldbergUA}, $g(\nu)$ is well defined and $g(\nu)\leq\nu$. Since $\cf(\nu)=\omega$, $g(\nu)<\nu$. By F\"{o}dor, there is an unbounded $S\subseteq E^\kappa_{\omega}$ and $\kappa^*<\kappa$ such that for every $\nu\in S$, $g(\nu)=\kappa^*$. In particular, $j_U(\kappa^*)\geq\kappa$. If $j_U(\kappa^*)>\kappa$, let $\gamma=\kappa^*$, otherwise $\kappa$ is a limit of $M_U$-strongly inaccessible cardinals. Let $\kappa^*<\gamma<\kappa$ be the least strongly inaccessible cardinal. In any case, $j_U(\gamma)>\kappa$ and since $M_U$ is closed under $<\kappa$-sequences, $\gamma$ is a strongly inaccessible cardinal in $V$.
        Therefore \(j_U''P_\kappa(\kappa)\)
        is covered by a set \(B\in M_U\)
        of cardinality less than \(j_U(\gamma)\).
        Let \(A= \{\bigcup S: S\in [B]^\kappa\cap M_U\}\).
        Then \(|A|^{M_U} < j_U(\gamma)\),
        and for any \(S\subseteq \kappa\),
        \(j_U(S)\cap \kappa_*\in A\)
        where \(\kappa_* =\sup j_U''\kappa\geq j_U(\gamma)\).
        Note that \(\kappa_* > j_U(f)(\alpha)\)
        for any \(f : \kappa\to \kappa\) and \(\alpha < \kappa_*\) and in particular there is no function $f:\kappa\rightarrow\kappa$ such that $j_U(f)(|A|^{M_U})\geq \kappa_*$.
        We conclude that \(A\) witnesses 
        \(\diamondsuit^-_{\textit{thin}}(U)\).
    \end{proof}
    
\begin{corollary}[UA]\label{Cor: ultrafilter on successor is Galvin}
If $U$ is a $\sigma$-complete ultrafilter over $\kappa^+$ then $\diamondsuit^-_{\text{thin}}(U)$ and in particular $U$ is non-Galvin.
 \end{corollary}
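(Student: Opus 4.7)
The plan is to combine the UA factorization of $\sigma$-complete ultrafilters (Theorem \ref{Theorem: Factorization into irreducibles}) with the new diamond result for irreducible non-complete ultrafilters on successor cardinals (Theorem \ref{Theorem: irreducible non complete satisfy diamond}), and then propagate $\diamondsuit^-_{\text{thin}}$ through the factorization using the two closure properties of this class (Lemmas \ref{Lemma: Rudin-Keisler upword closure of diamond minus} and \ref{Lemma: non discrete}). The non-Galvin conclusion is then immediate from Theorem \ref{Theorem: diamond minus implies non-Galvin}.

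Without loss of generality, $U$ is uniform on $\kappa^+$; since $\kappa^+$ is a successor cardinal it is not measurable, so $U$ cannot be $\kappa^+$-complete. By Theorem \ref{Theorem: Factorization into irreducibles}, $U$ is Rudin-Keisler equivalent to an $n$-fold sum of irreducible ultrafilters, and I would induct on $n$. For $n=1$, $U$ is itself irreducible and uniform on $\kappa^+$, hence $\kappa^+$-irreducible by the Remark following Definition \ref{Definition: special properties}; Theorem \ref{Theorem: irreducible non complete satisfy diamond} then gives $\diamondsuit^-_{\text{thin}}(U)$ directly.

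For the inductive step, write $U \equiv_{RK} \sum_V \langle W_\xi \rangle$, with $V$ an irreducible ultrafilter (taken uniform on its support) and each $W_\xi$ an $(n-1)$-fold sum of irreducibles. If $V$ is uniform on $\kappa^+$, then the base case applied to $V$ gives $\diamondsuit^-_{\text{thin}}(V)$; since the projection onto the first coordinate witnesses $V \leq_{RK} U$, Lemma \ref{Lemma: Rudin-Keisler upword closure of diamond minus} propagates this to $\diamondsuit^-_{\text{thin}}(U)$. Otherwise $V$ lives on a set of size at most $\kappa$, and since sums of ultrafilters cannot exceed the cardinality of the largest underlying set, $W_\xi$ must be uniform on $\kappa^+$ for $V$-almost all $\xi$; each such $W_\xi$ is then a $\sigma$-complete uniform ultrafilter on $\kappa^+$ which is an $(n-1)$-fold sum of irreducibles, so the induction hypothesis gives $\diamondsuit^-_{\text{thin}}(W_\xi)$, and Lemma \ref{Lemma: non discrete} yields $\diamondsuit^-_{\text{thin}}(U)$.

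The main obstacle I anticipate is the bookkeeping around the second case of the inductive step: one must carefully verify that the $W_\xi$'s indeed inherit the correct structure ($\sigma$-completeness, uniformity on $\kappa^+$, and being $(n-1)$-fold sums of irreducibles) on a $V$-measure-one set, and that Lemma \ref{Lemma: non discrete} is applicable with the correct reading of the sum. Everything else is a direct appeal to results already in place.
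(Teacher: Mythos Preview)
Your approach is correct but takes a genuinely different route from the paper. The paper bypasses induction entirely by invoking \cite[Lemma~8.2.24]{GoldbergUA}, which furnishes a one-step decomposition $U=\sum_{D}\langle W_\xi\rangle_{\xi<\lambda_D}$ with $D$ living on some $\lambda_D<\kappa^+$, the $W_\xi$'s discrete, and $W=[\xi\mapsto W_\xi]_D$ a $j_D(\kappa^+)$-irreducible ultrafilter in $M_D$. Since $j_D(\kappa^+)$ is a successor cardinal in $M_D$, $W$ cannot be $j_D(\kappa^+)$-complete, so Theorem~\ref{Theorem: irreducible non complete satisfy diamond} applied inside $M_D$ gives $\diamondsuit^-_{\text{thin}}(W)$ there; hence $\diamondsuit^-_{\text{thin}}(W_\xi)$ holds for $D$-almost all $\xi$, and Lemma~\ref{Lemma: discrete limit of diamond minus} (the discrete-limit lemma, not Lemma~\ref{Lemma: non discrete}) finishes.

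Your inductive argument via the full factorization (Theorem~\ref{Theorem: Factorization into irreducibles}) works too, and trades the specific tail-irreducible lemma from \cite{GoldbergUA} for the coarser decomposition plus Lemmas~\ref{Lemma: Rudin-Keisler upword closure of diamond minus} and~\ref{Lemma: non discrete}. The bookkeeping you flag in the second case is genuine but routine once spelled out: your cardinality argument gives the lower bound $\kappa^+$ on the minimal size of a $W_\xi$-large set, and the matching upper bound follows by fixing any $B\in\sum_V W_\xi$ with $|B|=\kappa^+$ and noting that for $V$-almost all $\xi$ the fiber $(B)_\xi\in W_\xi$ has $|(B)_\xi|\leq\kappa^+$. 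Thus $W_\xi$ is RK-equivalent to a uniform $\sigma$-complete ultrafilter on $\kappa^+$ for $V$-almost all $\xi$; after transferring to $\kappa^+$ (harmless by RK-invariance of $\diamondsuit^-_{\text{thin}}$), the induction hypothesis and Lemma~\ref{Lemma: non discrete} apply. The paper's route is shorter; yours is more self-contained.
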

    \begin{proof}
        By \cite[Lemma 8.2.24]{GoldbergUA}, $U=\sum_{D}\l W_\xi\r_{\xi<\lambda_D}$ where $D$ is an ultrafilter over $\lambda_D<\kappa^+$, $\l W_\xi\r_{\xi<\lambda_D}$ is discrete and $M_D\models W=[\xi\mapsto W_\xi]_D$ is $j_D(\kappa^+)$-irreducible which cannot be $j_D(\kappa^+)$-complete. By the previous theorem, $M_D\models\diamondsuit^-_{\text{thin}}(W)$. Therefore, for $D$-almost all $\xi$, $\diamondsuit^-_{\text{thin}}(W_\xi)$ which by Lemma  \ref{Lemma: discrete limit of diamond minus}, implies that $\diamondsuit^-_{\text{thin}}(\sum_D\l W_\xi\r_{\xi<\lambda_D})$ holds.  
    \end{proof}    
    \begin{theorem}[UA]
        Assume that every irreducible is Dodd sound. If \(W\) is a $\kappa$-complete ultrafilter over $\kappa$, then the following are equivalent:
        \begin{enumerate}
        \item $W$ has the Galvin property.
        \item $\neg\diamondsuit^-_{\text{thin}}(W)$.
        \item $W$ is an $n$-fold sum of $\kappa$-complete $p$-points over $\kappa$
        \end{enumerate}    
    \end{theorem}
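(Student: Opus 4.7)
The plan is to establish the cyclic chain of implications $(3)\Rightarrow(1)\Rightarrow(2)\Rightarrow(3)$. The first two steps are immediate from results earlier in the paper: $(3)\Rightarrow(1)$ is Theorem \ref{Theorem: Galvin Improvment}, and $(1)\Rightarrow(2)$ is the contrapositive of Theorem \ref{Theorem: diamond minus implies non-Galvin}. Hence the substance of the proof lies in the implication $(2)\Rightarrow(3)$, which I will establish by contraposition: assuming $W$ is not an $n$-fold sum of $\kappa$-complete $p$-points for any $n$, I will produce $\diamondsuit^-_{\text{thin}}(W)$.

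To handle the contrapositive, I will invoke the UA factorization theorem (Theorem \ref{Theorem: Factorization into irreducibles}) to write $W$ as an $n$-fold sum of irreducible $\kappa$-complete ultrafilters, and proceed by induction on $n$. For the base case $n=1$, $W$ itself is irreducible, hence Dodd sound by the standing hypothesis. If $W$ were a $p$-point then $W$ would be a $1$-fold sum of $p$-points, contradicting our assumption; so $W$ must be a non $p$-point Dodd sound ultrafilter, and Lemma \ref{EquivDoddSound} yields $\diamondsuit^*_{\text{thin}}(W)$, which trivially implies $\diamondsuit^-_{\text{thin}}(W)$.

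For the inductive step, I may write $W\equiv_{RK}\sum_D\langle W_\xi\rangle_{\xi<\lambda}$ where $D$ is an irreducible ultrafilter (hence Dodd sound) and each $W_\xi$ is an $(n-1)$-fold sum of irreducibles. Applying the induction hypothesis internally in $M_D$ to $[\xi\mapsto W_\xi]_D$ and transferring the resulting dichotomy back to $V$ by elementarity, one of two cases must hold. Case A: for $D$-almost every $\xi$, $W_\xi$ is an $(n-1)$-fold sum of $\kappa$-complete $p$-points. Then either $D$ is a $p$-point---in which case $W$ itself would be an $n$-fold sum of $p$-points, contradicting the standing assumption---or $D$ is a non $p$-point Dodd sound ultrafilter, so $\diamondsuit^-_{\text{thin}}(D)$ holds by Lemma \ref{EquivDoddSound}, and since $D\leq_{RK} W$, Lemma \ref{Lemma: Rudin-Keisler upword closure of diamond minus} yields $\diamondsuit^-_{\text{thin}}(W)$. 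Case B: for $D$-almost every $\xi$, $\diamondsuit^-_{\text{thin}}(W_\xi)$ holds; Lemma \ref{Lemma: non discrete} then propagates this to the sum, giving $\diamondsuit^-_{\text{thin}}(W)$.

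The main obstacle will be to verify carefully that the structural invariants used in the induction---namely $\kappa$-completeness of the factors and Dodd soundness of the irreducibles appearing---are preserved as one descends into the factorization, and to justify the ``$D$-measurable dichotomy'' cleanly, either by working internally in $M_D$ and using the factorization there, or by passing through discrete sums and using Lemma \ref{Lemma: discrete limit of diamond minus} where applicable. Once these bookkeeping issues are settled, the argument reduces to routine application of the closure properties of $\diamondsuit^-_{\text{thin}}$ under sums (Lemma \ref{Lemma: non discrete}) and under Rudin-Keisler extensions (Lemma \ref{Lemma: Rudin-Keisler upword closure of diamond minus}).
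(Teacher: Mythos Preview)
Your overall architecture ($(3)\Rightarrow(1)\Rightarrow(2)$ from earlier results, then $(2)\Rightarrow(3)$ by contraposition) matches the paper, but the inductive scheme you propose for the contrapositive has a genuine gap, and it is precisely the issue you flag at the end as ``bookkeeping.'' It is not bookkeeping; it is the entire content of the argument.

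The problem is your inductive step. You write $W\equiv_{RK}\sum_D\langle W_\xi\rangle$ with $D$ irreducible and then want to apply the induction hypothesis in $M_D$ to $W^*=[\xi\mapsto W_\xi]_D$. For that to make sense, $W^*$ must be a $j_D(\kappa)$-complete ultrafilter over $j_D(\kappa)$, i.e.\ $\crit(j_{W^*}^{M_D})\geq j_D(\kappa)$. But all that follows from $\kappa$-completeness of $W$ is $\crit(j_{W^*}^{M_D})\geq\kappa$; the second irreducible in the UA factorization can perfectly well live on an ordinal in the interval $[\kappa,j_D(\kappa))$ with critical point strictly below $j_D(\kappa)$. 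In that situation the $W_\xi$'s are not $\kappa$-complete and neither your Case~A nor your Case~B can be invoked. This case genuinely occurs and cannot be eliminated by choosing the decomposition more cleverly.

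The paper handles exactly this obstruction, but by a different organizing device: rather than peeling off the first irreducible and inducting, it takes the $\leq_{RF}$-\emph{maximal} $U\leq_{RF}W$ that is an $n$-fold sum of $\kappa$-complete $p$-points (existence via \cite[Thm.~5.3.14]{GoldbergUA}), writes $W=U\text{-}\lim W_\xi$ discretely, and then analyzes an irreducible $D_\xi\leq_{RF}W_\xi$. The maximality of $U$ is what makes the argument go through: if the $D_\xi$'s were $\kappa$-complete $p$-points one would contradict maximality; if they are $\kappa$-complete non-$p$-points, Dodd soundness gives $\diamondsuit^*_{\text{thin}}(D_\xi)$ directly. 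The delicate case is when the $D_\xi$'s are \emph{not} $\kappa$-complete, and here the paper needs a real case split (Cases~1, 2a, 2b, 2c in the Claim) according to where $D^*=[\xi\mapsto D_\xi]_U$ sits relative to the finite iteration giving $j_U$. Case~2b uses that a two-step iteration with the second critical point above the first image is again a $p$-point (contradicting maximality), and Case~2c requires a bespoke $\lambda$-soundness computation plus Theorem~\ref{Theorem: irreducible non complete satisfy diamond}. None of this is recoverable from your inductive hypothesis, because that hypothesis simply does not apply to the intermediate object.
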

    \begin{proof}
        Let $W$ be $\kappa$-complete ultrafilter. If $W$ is an $n$-fold sum of $\kappa$-complete $p$-points then by Theorem \ref{Theorem: D-limit of p-points} $W$ has the Galvin property which by Theorem \ref{Theorem: diamond minus implies non-Galvin} implies $\neg\diamondsuit^-_{\text{thin}}(W)$.
    Let $W$ be a $\kappa$-complete ultrafilter over $\kappa$ which is not an $n$-fold sum of $\kappa$-complete $p$-points. Let $U\leq_{RF}W$ be irreducible, which exists since $W$ is nontrivial. If $U$ is not a $p$-point then by the assumptions of the theorem, $U$ is a non $p$-point ultrafilter Dodd sound over $\kappa$ and therefore by Lemma \ref{EquivDoddSound}, $\diamondsuit^*_{\text{thin}}(U)$ holds and thus also $\diamondsuit^-_{\text{thin}}(U)$. Since $U\leq_{RK} W$,  Lemma \ref{Lemma: Rudin-Keisler upword closure of diamond minus} applies, so we can conclude that $\diamondsuit^-_{\text{thin}}(W)$.   Hence we may restrict ourselves to the case where there is a $p$-point RF-below $W$ (and this $p$-point must be $\kappa$-complete). By \cite[Thm. 5.3.14]{GoldbergUA}, there is a $\leq_{RF}$-maximal 
        $U \leq_\text{RF} W$ that is an  $n$-fold sum of $\kappa$-complete  $p$-points over $\kappa$.
        Let $\langle W_\xi\r_{\xi <
        \kappa}$ be a discrete sequence
        with $W = U\text{-}\lim\l W_\xi\r_{\xi < \kappa}$. By the choice of $U$, the embedding $j_U:V\rightarrow M_U$ can be factored as a finite iterated ultrapower $$V = M_0\overset{j_{0,1}}{\longrightarrow}M_1\overset{j_{1,2}}{\longrightarrow}\cdots\overset{j_{n-1,n}}{\longrightarrow}M_n=M_U$$
        where in \(M_k\), $j_{k,k+1}$ is the ultrapower embedding associated to
        a $\kappa_{k}$-complete $p$-point \(U_k\) over $\kappa_{k}$ and $\kappa_k=j_{0,k}(\kappa)$. Also, denote by $Z_k$ the ultrafilter associated with $j_{0,k}$; i.e.,
        $$Z_k=U_0^\frown U_1^\frown\cdots^\frown U_{k-2}^\frown U_{k-1}$$
        For this notation, see Definition \ref{Definition:Frown}.
        Since $W_\xi$ is nonprincipal,
        there is an irreducible
        ultrafilter $D_\xi \leq_\text{RF} W_\xi$. Suppose that $D_\xi$ is $\rho_\xi$-complete uniform ultrafilter over $\delta_\xi$ for some $\rho_\xi\leq\delta_\xi\leq\kappa$. Note that $\sum_U D_\xi\leq_{RF}W$.
        Let $m$ be the least such that $\kappa_{m-1}<\delta^*:=[\xi\mapsto \delta_\xi]_U\leq\kappa_{m}$ where $\kappa_{-1}$ is defined to be $0$. Let $D^*=[\xi\mapsto D_\xi]_{U}$ is an $M_U$-ultrafilter over $\delta^*$. Note that $D^*\in M_{m}$ since $M_n\subseteq M_m$ and since $\crit(j_{m,n})=\kappa_{m}$ it is an $M_m$-ultrafilter. Moreover, $M_n^{\kappa_{m}}\cap M_{m}=M_n^{\kappa_{m}}\cap M_n$ and therefore $(j_{D^*})^{M_{m}}\restriction M_n=(j_{D^*})^{M_n}$. By elementarity of $j_{D^*}^{M_m}$, $j_{D^*}^{M_n}\circ j_{m,n}=j_{D^*}^{M_{m}}(j_{m,n})\circ j_{D^*}^{M_{m}}$ and we have that
                 \begin{equation}\label{representationOfPPoints}
            j_{D^*}^{M_n}\circ j_U=j_{D^*}^{M_{m}}(j_{m,n})\circ j_{D^*}^{M_{m}}\circ j_{0,m}.
                 \end{equation}
          \begin{claim}
            If $M_{m}\models D^*$ is not $\kappa_m$-complete, then $\diamondsuit^-_{\text{thin}}(W)$ holds.
        \end{claim} 
        \begin{proof}[Proof of claim.]
            Since $D_\xi$ is irreducible, by our assumption, it is a non $\kappa$-complete  Dodd sound ultrafilter. Note that in this case $m>0$, since if $m=0$, the $D^*$ must be $\kappa$-complete. Let us split unto cases:\begin{enumerate}
                \item[\underline{Case 1:}] If $\delta^*=\kappa_m$, then $D^*$ is a uniform ultrafilter on $\kappa_m$ and it must be $\kappa_m$-irreducible. By Theorem \ref{Theorem: irreducible non complete satisfy diamond} $M_m\models\diamondsuit^-_{\text{thin}}(D^*)$ holds. By Lemma \ref{Lemma: SumofDiamond}, we conclude that $\diamondsuit^-_{\text{thin}}(Z_m^\frown D^*)$ holds in $V$ 
                (see Definition \ref{Definition:Frown} for this notation),
                and hence by Lemma \ref{Lemma: Rudin-Keisler upword closure of diamond minus} $\diamondsuit^-_{\text{thin}}(W)$ follows as well.
                \item[\underline{Case 2:}]   Assume that $\delta^*<\kappa_m$. 
                  
                 \begin{enumerate}
                     \item[\underline{Case 2(b):}]
                 Assume $\crit(j_{D^*}^{M_m})>\kappa_{m-1}$. Note that the two step iteration ultrapower $j_{D^*}^{M_{m}}\circ j_{U_{m-1}}$ is given by
                a $\kappa_{m-1}$-complete $p$-point on $\kappa_{m-1}$ in $M_m$ (see \cite[Lemma 1.11]{SatInCan}), which contradicts the maximality of $U$. 
                
                \item [\underline{Case 2(c):}] Assume  $\crit(j_{D^*}^{M_{m}})\leq\kappa_{m-1}<\delta^*<\kappa_m$. Since $D^*$ is an irreducible uniform ultrafilter over $\lambda_{D^*}\geq\kappa_{m-1}^+$, $D^*$ is $\kappa_{m-1}^+$-irreducible and therefore by \cite[Theorem 8.2.22]{GoldbergUA}, $M_{D^*}$ is closed under $\kappa_{m-1}$-sequences which in turn implies that $P(\kappa_{m-1})\subseteq M_{D^*}$. By Lemma \cite[Lemma 4.2.36]{GoldbergUA}, $j^{M_m}_{D^*}(\kappa_{m-1})>\kappa_{m-1}$. Let $\lambda=j^{M_m}_{D^*}(\kappa_{m-1})$. We claim that ${U_{m-1}}^\frown D^*$ is $\lambda$-sound and that for every function $f:\kappa_{m-1}\rightarrow \kappa_{m-1}$, $j_{{U_{m-1}}^\frown D^*}(f)(\kappa_{m-1})<\lambda$ which by Corollary \ref{corollary: lambda sound and diamond minus} implies   that $\diamondsuit^-_{\text{thin}}({U_{m-1}}^\frown D^*)$. Indeed for any function $f:\kappa_{m-1}\rightarrow \kappa_{m-1}$, since $j^{M_m}_{D^*}(\kappa_{m-1})>\kappa_{m-1}$, \\$j^{M_m}_{D^*}(j_{U_{m-1}}(f))(\kappa_{m-1})=j_{D^*}^{M_m}(j_{U_{m-1}}(f)\restriction\kappa_{m-1})(\kappa_{m-1})=j_{D^*}^{M_m}(f)(\kappa_{m-1}),$ and $j_{D^*}^{M_m}(f):j_{D^*}^{M_m}(\kappa_{m-1})\rightarrow j_{D^*}^{M_m}(\kappa_{m-1})$. Hence $j_{D^*}^{M_m}(f)(\kappa_{m-1})<j_{D^*}^{M_m}(\kappa_{m-1})$.
                
                To see that ${U_{m-1}}^\frown D^*$ is $\lambda$-sound, derive the $(\kappa_{m-1},\lambda)$-extender $E$ from $j_{D^*}^{M_m}$ inside $M_m$. Note that $E$ is also the $(\kappa_{m-1},\lambda)$-extender derived from $j_{D^*}\circ j_{m-1,m}$ since for $\alpha<j_{D^*}^{M_m}(\kappa_{m-1})$ we have that:\\ $\alpha\in j_{D^*}^{M_m}(j_{m-1,m}(X))\cap j_{D^*}^{M_m}(\kappa_{m-1})$ iff  $\alpha\in j_{D^*}^{M_m}(j_{m-1,m}(X)\cap \kappa_{m-1})$ iff $ \alpha\in j^{M_m}_{D^*}(X)$. 
                
                Now $D^*$ is a uniform ultrafilter over $\delta^*>\kappa_{m-1}$, hence we have that $j^{M_m}_{D^*}(\kappa)<[id]_{D^*}$ and since $D^*$ is Dodd sound we have that $E\in (M_{D^*})^{M_m}$. In particular, $\{j_E(X)\mid X\subseteq\kappa_{m-1}\}\in (M_{D^*})^{M_{m}}$ where $j_E:M_{m-1}\rightarrow M_E$. Let $k_E:M_E\rightarrow (M_{D^*})^{M_{m}}$ be the factor map. It follows that $\crit(k_E)= j_{D^*}^{M_m}(\kappa_{m-1})$. Finally, note that $j_{{U_{m-1}}^\frown D^*}(X)\cap j^{M_m}_{D^*}(\kappa_{m-1})=j_E(X)$, hence $$\{j_{{U_{m-1}}^\frown D^*}(X)\cap j^{M_m}_{D^*}(\kappa_{m-1})\mid X\subseteq \kappa_{m-1}\}\in (M_{D^*})^{M_m}$$ as desired. We conclude that $M_{m-1}\models\diamondsuit^-_{\text{thin}}({U_{m-1}}^\frown D^*)$. By Lemma \ref{Lemma: non discrete} $\diamondsuit^-_{\text{thin}}({Z_{m-1}}^\frown {U_{m-1}}^\frown D^*$, and this ultrafilter is Rudin-Keisler below $W$.\qedhere\end{enumerate} 
                
            \end{enumerate}  \end{proof}
            By the claim, we may assume that for $M_m\models D^*$ is $\kappa_m$-complete over $\kappa_m$. It follows again that in $M_m$, $D^*$ cannot be a $p$-point, as this would contradict the maximality of \(U\), recalling that $\sum_U D_\xi\leq_{RF}W$ and that this ultrafilter $\sum_UD_\xi$ can be represented as an $n+1$-fold sum of $\kappa$-complete $p$-points by (\ref{representationOfPPoints}). Since $D^*$ is irreducible in $M_m$, $M_m\models D^*$ is Dodd-sound and non $p$-point. By Lemma \ref{EquivDoddSound}  $M_m\models\diamondsuit^*_{\text{thin}}(D^*)$ holds. In particular, $\diamondsuit^-_{\text{thin}}(D^*)$ holds. In any case, Lemma \ref{Lemma: SumofDiamond} applies to conclude that $\diamondsuit^-_{\text{thin}}({Z_m}^\frown D^*)$ holds, and since this ultrailter is $RK$-below $W$, lemma \ref{Lemma: Rudin-Keisler upword closure of diamond minus} ensures that $\diamondsuit^-_{\text{thin}}(W)$ holds.
\end{proof}\begin{theorem}[UA]\label{theorem:Gal-char} Assume that every irreducible ultrafilter is
    Dodd sound. For every $\sigma$-complete ultrafilter $W$ over $\kappa$ the following are equivalent:
    \begin{enumerate}
        \item $W$ has the Galvin property.
        \item $\neg\diamondsuit^-_{\text{thin}}(W)$.
        \item $W$ is the $D$-sum of $n$-fold sums of $\kappa$-complete $p$-points over $\kappa$ and $D$ is a $\sigma$-complete ultrafilter on $\lambda<\kappa$.
    \end{enumerate} 
    \end{theorem}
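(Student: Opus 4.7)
Two of the three implications follow directly from results earlier in the paper: $(3)\Rightarrow(1)$ is Theorem \ref{Theorem: D-limit of p-points}, and $(1)\Rightarrow(2)$ is the contrapositive of Theorem \ref{Theorem: diamond minus implies non-Galvin}. The substance is in proving $(2)\Rightarrow(3)$.

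For $(2)\Rightarrow(3)$, I would split into two cases based on whether $W$ is $\kappa$-complete. If $W$ is $\kappa$-complete, the preceding theorem (the $\kappa$-complete version of this equivalence) applies directly, so $\neg\diamondsuit^-_{\text{thin}}(W)$ forces $W$ to be an $n$-fold sum of $\kappa$-complete $p$-points on $\kappa$; this is an instance of (3) with $D$ principal on a single point (which is trivially $\sigma$-complete and lives on $\lambda_D<\kappa$). Suppose instead that $W$ is not $\kappa$-complete, so $\crit(j_W)<\kappa$. Then I would invoke Goldberg's factorization \cite[Lemma 8.2.24]{GoldbergUA} (as used in Corollary \ref{Cor: ultrafilter on successor is Galvin}) to write $W\equiv_{RK}\sum_D\langle W_\xi\rangle_{\xi<\lambda_D}$ where $D$ is a uniform $\sigma$-complete ultrafilter on some $\lambda_D<\kappa$, the sequence $\langle W_\xi\rangle$ is discrete, and $W^* = [\xi\mapsto W_\xi]_D$ is a $j_D(\kappa)$-irreducible uniform $M_D$-ultrafilter on $j_D(\kappa)$.

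The plan is now to show that $W^*$ is $j_D(\kappa)$-complete in $M_D$, then apply the $\kappa$-complete version of the theorem inside $M_D$ to $W^*$, and finally reflect by \L{}o\'s. For the first step, suppose toward contradiction that $W^*$ is not $j_D(\kappa)$-complete in $M_D$. Since the hypotheses UA and ``every irreducible ultrafilter is Dodd sound'' transfer to $M_D$ by elementarity and absoluteness, Theorem \ref{Theorem: irreducible non complete satisfy diamond} applied inside $M_D$ yields $M_D\models\diamondsuit^-_{\text{thin}}(W^*)$, whence Lemma \ref{Lemma: non discrete} produces $\diamondsuit^-_{\text{thin}}(W)$ in $V$, contradicting (2). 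A second appeal to the contrapositive of Lemma \ref{Lemma: non discrete} gives $M_D\models\neg\diamondsuit^-_{\text{thin}}(W^*)$. Now in $M_D$, $W^*$ is a $j_D(\kappa)$-complete ultrafilter on $j_D(\kappa)$ satisfying $\neg\diamondsuit^-_{\text{thin}}$, so the preceding theorem (applied internally) says $W^*$ is an $n$-fold sum of $j_D(\kappa)$-complete $p$-points over $j_D(\kappa)$ in $M_D$. By \L{}o\'s, for $D$-almost all $\xi$, $W_\xi$ is an $n$-fold sum of $\kappa$-complete $p$-points over $\kappa$, and consequently $W\equiv_{RK}\sum_D\langle W_\xi\rangle$ has the form required by (3).

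The main obstacle is the transition between $V$ and $M_D$: verifying that $W^*$ must be $j_D(\kappa)$-complete in $M_D$ (which excludes the pathological non-complete irreducible case via Theorem \ref{Theorem: irreducible non complete satisfy diamond}), and confirming that the hypotheses of the $\kappa$-complete theorem are available inside $M_D$ so the theorem can be invoked there. Once this is in place, the rest is bookkeeping around Lemma \ref{Lemma: non discrete} and the reflection of the $p$-point sum structure via \L{}o\'s.
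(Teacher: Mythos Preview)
Your proposal is correct and follows essentially the same route as the paper. The paper argues the contrapositive of $(2)\Rightarrow(3)$ (assume $W$ is not of the form in (3), derive $\diamondsuit^-_{\text{thin}}(W)$), while you argue the direct implication, but the ingredients are identical: Goldberg's factorization \cite[Lemma~8.2.24]{GoldbergUA}, Theorem~\ref{Theorem: irreducible non complete satisfy diamond} applied in $M_D$ to rule out the non-$j_D(\kappa)$-complete case, the preceding $\kappa$-complete theorem applied in $M_D$, and the passage between $\diamondsuit^-_{\text{thin}}(W^*)$ in $M_D$ and $\diamondsuit^-_{\text{thin}}(W)$ in $V$. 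The only cosmetic difference is that the paper cites Lemma~\ref{Lemma: discrete limit of diamond minus} for this last passage while you cite Lemma~\ref{Lemma: non discrete}; since the $W_\xi$'s are discrete, either lemma applies.
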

    \begin{proof}
        The proof that $(3)\Rightarrow(1)\Rightarrow (2)$ is in the previous theorem. It remains to prove that $\neg\diamondsuit^-_{\text{thin}}(W)$ implies that $W$ is a $D$-sum of $n$-fold sums of $\kappa$-complete $p$-points over $\kappa$. Equivalently, let us prove the contrapositive, suppose that \(W\) is a $\sigma$-complete  ultrafilter over $\kappa$ which is not an $n$-fold sum of $p$-points. 
         Now let us move to the general case, suppose that $W$ is just $\sigma$-complete. By \cite[Lemma 8.2.24]{GoldbergUA}, there is a countably complete ultrafilter $D\leq_{RF} W$ on $\lambda<\kappa$ such that if $W=D\text{-}\lim\l W_\xi\r_{\xi<\lambda}$ then $M_D\models Z=[\xi\mapsto W_\xi]_D$ is $j_D(\kappa)$-irreducible. If $Z$ is not $j_D(\kappa)$-complete then by Theorem \ref{Theorem: irreducible non complete satisfy diamond}. $M_D\models \diamondsuit^-_{\text{thin}}(Z)$ and therefore $W=D\text{-}\lim \l W_\xi\r_{\xi<\lambda}$ will also satisfy $\diamondsuit^-_{\text{thin}}$ by Lemma \ref{Lemma: discrete limit of diamond minus}. If $Z$ is $j_D(\kappa)$-complete, then $Z$ is a $j_D(\kappa)$-complete ultrafilter which is not a $D'$-sum of $n$-fold sums of $p$-points and we fall into the first case where we assumed that $W$ was $\kappa$-complete (inside $M_D$ and replacing $\kappa$ by $j_D(\kappa)$). We conclude that $\diamondsuit^-_{\text{thin}}(Z)$ holds and again, it follows from that $\diamondsuit^-_{\text{thin}}(W)$ holds.
    \end{proof}

    Next, we turn to the proof of Main Theorem \ref{mtheorem: extending the club filter under UA}.
\begin{theorem}[UA]\label{Theorm: Main 4}
    Assume that every irreducible ultrafilter
    is Dodd sound.
    Suppose $\kappa$ is 
    an uncountable cardinal
    that carries a $\kappa$-complete non-Galvin ultrafilter.
    Then the Ketonen least non-Galvin
    $\kappa$-complete
    ultrafilter on $\kappa$
    extends the closed unbounded filter.
\end{theorem}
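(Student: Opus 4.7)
I plan to argue by contradiction. Let $U$ be the Ketonen least non-Galvin $\kappa$-complete ultrafilter on $\kappa$, and suppose that $U$ does not extend the closed unbounded filter.

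Since $U$ is not weakly normal, $[\text{id}]_U > \kappa$. Set $U^* := D(j_U, \kappa)$, the $\kappa$-complete ultrafilter on $\kappa$ derived from $j_U$ using $\kappa$ as seed. Then $U^*$ extends the closed unbounded filter (because $\kappa \in j_U(C)$ for every club $C \subseteq \kappa$), the factor map $k \colon M_{U^*} \to M_U$ defined by $k([f]_{U^*}) = j_U(f)(\kappa)$ satisfies $\crit(k) > \kappa$ and $k([\text{id}]_{U^*}) = \kappa$, and consequently $U^* <_{\Bbbk} U$.

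Invoking UA inside $M_{U^*}$, I identify $k$ as the ultrapower embedding of $M_{U^*}$ by some internal ultrafilter $W$; pulling $W$ back to $V$ via $[\xi \mapsto W_\xi]_{U^*} = W$ produces a sequence $\langle W_\xi \rangle_{\xi < \kappa}$ of $\kappa$-complete $V$-ultrafilters on $\kappa$ realizing the decomposition $U = U^*\text{-}\lim\langle W_\xi\rangle$. Thus $U$ is Rudin-Keisler below $\sum_{U^*}\langle W_\xi\rangle$. If every $W_\xi$ were Galvin, then by Theorem \ref{theorem:Gal-char} each $W_\xi$ would be an $n$-fold sum of $\kappa$-complete $p$-points on $\kappa$, Theorem \ref{Theorem: D-limit of p-points} would imply that $\sum_{U^*}\langle W_\xi\rangle$ has the Galvin property, and the Rudin-Keisler downward closure of the Galvin property would force $U$ itself to be Galvin---a contradiction. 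Hence on a $U^*$-measure-one set of $\xi$'s, $W_\xi$ is a $\kappa$-complete non-Galvin ultrafilter on $\kappa$; the standard UA comparison theory then yields $W_\xi <_{\Bbbk} U$, contradicting the Ketonen minimality of $U$.

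The main obstacle is the second paragraph: converting the abstract Ketonen inequality $U^* <_{\Bbbk} U$ into the concrete factorization $U = U^*\text{-}\lim\langle W_\xi\rangle$ in $V$, and verifying that the components $W_\xi$ really are $\kappa$-complete ultrafilters on $\kappa$ which lie strictly Ketonen below $U$. Both tasks depend on applying the Ultrapower Axiom inside $M_{U^*}$ to identify the factor map $k$ as an internal ultrapower embedding, and on the standard UA machinery for comparing the Ketonen ranks of derived ultrafilters. Once these structural steps are settled, the contradiction follows cleanly from Theorems \ref{theorem:Gal-char} and \ref{Theorem: D-limit of p-points}.
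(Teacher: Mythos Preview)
Your factorization $U = U^*\text{-}\lim\langle W_\xi\rangle$ via $U^* = D(j_U,\kappa)$ is set up correctly, but both of the tasks you flag as the ``main obstacle'' actually fail, and UA does not supply them. First, the components $W_\xi$ need not be $\kappa$-complete. The internal ultrafilter $W=[\xi\mapsto W_\xi]_{U^*}$ has completeness $\crit(k)$; you note $\crit(k)>\kappa$, but $\kappa=[\text{id}]_{U^*}$ inside $M_{U^*}$, so \L o\'s only yields that $W_\xi$ is $\xi^+$-complete for $U^*$-almost all $\xi$. What you would need is $\crit(k)\geq j_{U^*}(\kappa)$, and in general this is false: $\crit(k)$ is an $M_{U^*}$-cardinal, and as soon as $(\kappa^{++})^{M_{U^*}}<(\kappa^{++})^{M_U}$ one already has $\crit(k)\leq(\kappa^{++})^{M_{U^*}}<j_{U^*}(\kappa)$. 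Without $\kappa$-completeness you cannot invoke the $\kappa$-complete case of Theorem~\ref{theorem:Gal-char} on each $W_\xi$. Second, and independently, there is no ``standard UA comparison'' that places $W_\xi$ strictly below $U$ in the Ketonen order: the $W_\xi$ sit on the internal side of the factorization, not Rudin--Frol\'ik below $U$, and nothing in the UA machinery forces $W_\xi<_\Bbbk U$.

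The paper's argument avoids both issues by working directly rather than by contradiction. From Ketonen-minimality together with the characterization theorem one sees that $U$ itself is irreducible and not a $p$-point; setting $\lambda=\sup\{j_U(f)(\kappa):f\colon\kappa\to\kappa\}$, the derived ultrafilter $D=D(j_U,\lambda)$ extends the club filter, concentrates on singular cardinals, and satisfies $D\leq_\Bbbk U$. Hence the Ketonen-least club-extending, singular-concentrating ultrafilter $W$ is $\leq_\Bbbk U$. Conversely $W$ is irreducible, hence Dodd sound, and not a $p$-point, hence non-Galvin by Corollary~\ref{Corollary: nonppoint+DoddSound implies non-Galvin}, giving $U\leq_\Bbbk W$. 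Therefore $U=W$, which extends the club filter.
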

\begin{proof}
    We claim that in this context, 
    the Ketonen least non-Galvin ultrafilter \(U\)
    is equal to the Ketonen least
    ultrafilter \(W\)
    on a regular cardinal \(\delta\) extending the closed
    unbounded filter and concentrating on singular cardinals.
    First, note that
    \(W\) is irreducible by 
    \cite[Corollary 8.2.12]{GoldbergUA}. 
    \begin{claim}\label{Claim: W is complete}
        $W$ is $\delta$-complete
    \end{claim}
    \begin{proof}[Proof of Claim \ref{Claim: W is complete}.] Suppose towards a contradiction that $W$ is not $\delta$-complete and let $\mu=crit(j_W)<\delta$. Since $W$ is Dodd sound, $j_W$ is a $2^{<\delta}$-supercompact embedding (see \cite[Lemma 4.3.4]{GoldbergUA}),
    and so \(j_W\) witnesses that $\mu$ is $2^{<\delta}$-supercompact. In particular,
    \(\mu\) is \(2^\mu\)-supercompact,
    and therefore every \(\mu\)-complete filter on \(\mu\) extends to a \(\mu\)-complete ultrafilter. This yields a $\mu$-complete ultrafilter $W'$ on \(\mu\) extending the closed unbounded filter
    on \(\mu\) adjoined with the set of singular cardinals less than \(\mu\).
    Since $\mu<\delta$, it follows that $W'<_{\Bbbk}W$ (see \cite[Lemma 3.3.15]{GoldbergUA}) contradicting the minimality of $W$.
    \end{proof}
    \noindent\textit{End of proof of Theorem \ref{Theorm: Main 4}.} Note that \(W\) is not a $p$-point since \(W\) extends the closed unbounded filter but is not normal; therefore by 
    Corollary \ref{Corollary: nonppoint+DoddSound implies non-Galvin}, \(W\) is non-Galvin, and hence \(U\) is below \(W\)
    in the Ketonen order.

    Conversely, since \(U\)
    is the Ketonen least non-Galvin ultrafilter, by Theorem \ref{Theorem: Galvin Improvment},
    \(U\) is 
    irreducible and not a $p$-point.
    Without loss of generality, we can assume
    that \(U\) is Dodd sound.
    Moreover, \(U\) is a 
    \(\gamma\)-complete ultrafilter
    on \(\gamma\) for some 
    measurable cardinal \(\gamma\).
    
    Let \(\lambda = \sup \{j_U(f)(\gamma) \mid
    f\colon \gamma\rightarrow\gamma\}\).
    Since \(U\) is not a \(p\)-point,
    \(\lambda \leq [\text{id}]_U\).
    Since \(U\)
    is Dodd sound, 
    \(\{j_U(A)\cap \lambda : A\subseteq \gamma\}\in M_U\), which implies
    \[\{j_U(f)\cap (\lambda\times \lambda) \mid f : \gamma\to \gamma\}\in M_U\] and hence 
    \(\{j_U(f)(\gamma) \mid
    f\colon \gamma\rightarrow\gamma\}\in M_U\),
    which implies that \(M_U\) satisfies \(\cf(\lambda) \leq 2^\gamma\).
    
    Let \(D\) be the 
    ultrafilter
    on \(\gamma\)
    derived from \(j_U\) using
    \(\lambda\). 
    Then \(D\) is below \(U\)
    in the Ketonen order.
    Since
    \(\cf^{M_U}(\lambda) \leq 2^\gamma\),
    \(D\) concentrates on singular cardinals. Moreover, for any
    \(f\in \gamma^\gamma\),
    \(\lambda\) is closed under 
    \(j_U(f)\) --- that is, \(j_U(f)[\lambda]\subseteq \lambda\)
    --- so \(D\) concentrates on the set
    of closure points of \(f\). It follows that \(D\) extends the
    closed unbounded filter.
    Therefore \(W\) is below 
    \(D\) in the Ketonen order,
    so by the transitivity of the Ketonen order, \(W\)
    is below \(U\) in the Ketonen order. It follows that
    \(U = W\) as claimed.
    This implies that \(U\) extends the club filter, which proves the theorem.
\end{proof}
Let us turn our attention to the non-Galvin cardinals. Main Theorem \ref{mtheorem: non-Galvin ulder UA is optimal}, which we now prove, shows that the existence of a non-Galvin cardinal is exactly the large cardinal assumption needed to conclude the existence of non-Galvin ultrafilters in an inner model.
\begin{theorem}[UA]\label{Theorem: Main 2}
    Assume that every irreducible ultrafilter
    is Dodd sound. If there is a 
    $\kappa$-complete non-Galvin ultrafilter
    on an uncountable cardinal $\kappa$,
    then there is a non-Galvin cardinal.
\end{theorem}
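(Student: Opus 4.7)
The plan is to apply Lemma \ref{Lemma: Soundness implies non-Galvin cardinal} to the Ketonen least non-Galvin ultrafilter in $V$. By hypothesis there exists a $\kappa$-complete non-Galvin ultrafilter on some uncountable $\kappa$, so the class of non-Galvin $\sigma$-complete ultrafilters in $V$ is nonempty; let $U$ be its Ketonen-least element. The ``Conversely'' paragraph in the proof of Theorem \ref{Theorm: Main 4} establishes, without any extra hypothesis, that such a $U$ is automatically irreducible, is not a $p$-point, and is a $\gamma$-complete ultrafilter on some measurable cardinal $\gamma$; I will reuse that paragraph wholesale to supply these structural facts.

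By the standing assumption that every irreducible ultrafilter is Dodd sound, $U$ is $[\text{id}]_U$-sound. Set $\lambda := \sup\{j_U(f)(\gamma) \mid f\colon\gamma\to\gamma\}$, the supremum of the first sky above $\gamma$ in $M_U$. Because $U$ is not a $p$-point, $[\text{id}]_U$ fails to lie in the first sky above $\gamma$, so $\lambda < [\text{id}]_U$; this is the standard sky-characterization of $p$-points, which one reads off Proposition \ref{Proposition: Functions properties}(2) applied to the minimal unbounded function representing $\gamma$. Since the slice $j^\lambda(X) = j^{[\text{id}]_U}(X)\cap\lambda$ is definable inside $M_U$ from the parameters $\lambda$ and $j^{[\text{id}]_U}$, the $[\text{id}]_U$-soundness of $U$ promotes immediately to $\lambda$-soundness.

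At this point all the hypotheses of Lemma \ref{Lemma: Soundness implies non-Galvin cardinal} hold for $U$, $\gamma$, and $\lambda$: namely, $U$ is a $\gamma$-complete non-$p$-point $\lambda$-sound ultrafilter on $\gamma$, with $\lambda$ the first-sky supremum required by the lemma. Applying the lemma directly produces the embeddings $j_U$, $j_E$, $k_E$ and the set $A := j_E{}''2^\gamma \in M_U$ which together witness that $\gamma$ is a non-Galvin cardinal, completing the proof. The main potential obstacle is conceptual rather than computational: one must be confident that the ``Conversely'' paragraph of Theorem \ref{Theorm: Main 4} really does extract irreducibility, non-$p$-pointness, and $\gamma$-completeness for the Ketonen least non-Galvin ultrafilter without absorbing any side condition on the particular $\kappa$ supplied by the hypothesis. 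Once that is granted, the argument is an essentially mechanical combination of Theorem \ref{Theorm: Main 4} with the soundness-to-non-Galvin-cardinal lemma.
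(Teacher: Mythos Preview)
Your proof is correct in spirit and ends the same way as the paper's: both produce an irreducible, non-$p$-point, Dodd sound, $\gamma$-complete ultrafilter and invoke Lemma~\ref{Lemma: Soundness implies non-Galvin cardinal}. The paper gets there more directly, by factoring the given $W$ into irreducibles (Theorem~\ref{Theorem: Factorization into irreducibles}) and observing that some factor must fail $p$-pointness, else Theorem~\ref{Theorem: Galvin Improvment} would make $W$ Galvin; you instead pass to the Ketonen-least non-Galvin ultrafilter and reuse the structural analysis from Theorem~\ref{Theorm: Main 4}. The endgame is identical.

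One wrinkle worth noting: you minimize over all $\sigma$-complete non-Galvin ultrafilters, but the ``Conversely'' paragraph you cite in Theorem~\ref{Theorm: Main 4} is written for the Ketonen-least \emph{$\kappa$-complete} non-Galvin ultrafilter on the fixed $\kappa$, where the conclusion ``$U$ is $\gamma$-complete on some measurable $\gamma$'' is immediate because $\gamma=\kappa$. Your global Ketonen minimum could in principle land on a successor cardinal---Corollary~\ref{Cor: ultrafilter on successor is Galvin} manufactures non-Galvin $\sigma$-complete ultrafilters on every $\kappa^+$ above a measurable---and there $\gamma$-completeness is not automatic from that paragraph alone. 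The fix is trivial (minimize instead among $\kappa$-complete non-Galvin ultrafilters on the $\kappa$ supplied by the hypothesis, so that Theorem~\ref{Theorm: Main 4} applies verbatim with $\gamma=\kappa$), but as written the appeal is not quite a wholesale reuse.
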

\begin{proof}
    Let $W$ be a non-Galvin ultrafilter on $\kappa$. By Theorem  \ref{Theorem: Factorization into irreducibles}, $W$ is Rudin-Keisler equivalent to an $n$-fold sum of irreducible ultrafilters. By Theorem \ref{Theorem: Galvin Improvment}, it is impossible that all these ultrafilters are $p$-points (even on measure one sets) so $\kappa$ must carry an irreducible ultrafilter $U$ which is not a $p$-point. By our assumption, every irreducible is Dodd sound. Since $U$ is a $\kappa$-complete, non $p$-point, Dodd sound ultrafilter, Lemma \ref{Lemma: Soundness implies non-Galvin cardinal} applies, and we conclude that $\kappa$ is a non-Galvin cardinal.  
\end{proof}
\begin{proposition}[UA]\label{Proposition: UA implies compact above non-Galvin}
    If \(\kappa\)
    is \(\kappa\)-compact
    and no cardinal \(\nu < \kappa\)
    is \(\kappa\)-supercompact,
    then \(\kappa\)
    a limit of non-Galvin cardinals.
    \begin{proof}
    Since \(\kappa\)
    is \(\kappa\)-compact, a theorem
    of Kunen \cite[Lemma 3]{KUNEN1971} implies that
    for every \(\xi < (2^{\kappa})^+\),
    there is a countably complete
    ultrafilter
    \(U\) on \(\kappa\)
    such that \(j_U(\xi) > \xi\).
    Let \(U_\xi\) denote the Ketonen least
    such ultrafilter.
    By \cite[Lemma 7.4.34]{GoldbergUA} and
    \cite[Proposition 8.3.39]{GoldbergUA}, \(U_\xi\)
    is a \textit{Mitchell point}:
    for any ultrafilter \(W<_{\Bbbk} U\),
    \(W\) lies below \(U\)
    in the Mitchell order.

     Since \(\kappa\) is strongly inaccessible, there is an \(\omega\)-club \(C\subseteq(2^\kappa)^+\)
    such that for all \(\xi\in C\), for all
    countably complete
    ultrafilters \(D\)
    of rank less than \(\xi\) in the Ketonen order,
    \(j_D(\xi) = \xi\). 
    For \(\xi \in C\),
    \(U_\xi\) is a uniform irreducible ultrafilter on \(\kappa\), and so it follows from \cite[Theorem 8.2.23]{GoldbergUA} that
    \(U_\xi\) witnesses 
    \(\crit(j_{U_\xi})\) is 
    \({<}\kappa\)-supercompact.
    Since \(\kappa\) is measurable,
    it follows that
    \(\crit(j_{U_\xi})\) is \(\kappa\)-supercompact,
    and so by the assumptions of the proposition, 
    \(\crit(j_{U_\xi}) = \kappa\).
    In other words \(U_\xi\) is
    \(\kappa\)-complete.

    Now let \(W\)
    witness that \(\kappa\)
    is a non-Galvin cardinal. 
    Fix \(\xi \in C\)
    larger than the Ketonen rank of \(W\).
    Then \(W\) is below 
    \(U_\xi\) in the Mitchell order, and so \(\kappa\) is non-Galvin
    in \(M_{U_\xi}\).
    It follows that \(\kappa\) is a limit of non-Galvin cardinals.
    \end{proof}
\end{proposition}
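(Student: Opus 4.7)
The strategy is to find, for arbitrarily large $\xi<(2^\kappa)^+$, a countably complete ultrafilter $U_\xi$ on $\kappa$ such that $\crit(j_{U_\xi})=\kappa$ and such that $M_{U_\xi}$ contains a set-sized witness that $\kappa$ is non-Galvin. Once this is achieved, \L o\'s's theorem applied to the normal measure derived from $j_{U_\xi}$ at $\kappa$ yields that $\{\nu<\kappa:\nu\text{ is non-Galvin}\}$ is unbounded in $\kappa$. Theorem \ref{Theorem: Compact implies non-Galvin} already supplies a non-Galvin ultrafilter $W$ on $\kappa$ to start from, so the real task is to promote $W$ into such an $M_{U_\xi}$.

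The natural UA-theoretic tool is Kunen's moving lemma combined with the Mitchell-point phenomenon. By $\kappa$-compactness, for every $\xi<(2^\kappa)^+$ there exists a countably complete ultrafilter on $\kappa$ moving $\xi$; letting $U_\xi$ be the Ketonen-least such ultrafilter, Goldberg's results yield that $U_\xi$ is a Mitchell point, so every countably complete ultrafilter strictly Ketonen-below $U_\xi$ is in fact Mitchell-below $U_\xi$ and hence lies in $M_{U_\xi}$. Choosing $\xi$ above the Ketonen rank of the fixed non-Galvin $W$ then places $W\in M_{U_\xi}$, which should be enough for $M_{U_\xi}$ to see that $\kappa$ is non-Galvin: the witnessing embeddings in Definition \ref{Definition: non-Galvin cardinal} can be replaced by ultrapower/extender embeddings derivable from the set-sized ultrafilter $W$, per the proposition following that definition.

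The main obstacle I expect is forcing $\crit(j_{U_\xi})=\kappa$, i.e., arranging $U_\xi$ to be $\kappa$-complete. I would first restrict $\xi$ to an $\omega$-club of cardinals in $(2^\kappa)^+$ that are fixed by every countably complete ultrapower of Ketonen rank below $\xi$; on such a club, Ketonen-minimality should force $U_\xi$ to be uniform on $\kappa$, and in particular irreducible. The assumption that every irreducible ultrafilter is Dodd sound, combined with the soundness-based supercompactness bounds in \cite[Theorem 8.2.23]{GoldbergUA}, then gives that $\crit(j_{U_\xi})$ is ${<}\kappa$-supercompact. Finally, since $\kappa$ is measurable, any ${<}\kappa$-supercompact cardinal below $\kappa$ should promote to being $\kappa$-supercompact by a standard transfer via a normal measure on $\kappa$; the hypothesis that no $\nu<\kappa$ is $\kappa$-supercompact therefore pushes $\crit(j_{U_\xi})$ all the way up to $\kappa$.

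Putting it all together, for cofinally many $\xi\in(2^\kappa)^+$ we obtain a $\kappa$-complete ultrafilter $U_\xi$ on $\kappa$ such that $M_{U_\xi}\models\text{``}\kappa\text{ is a non-Galvin cardinal,''}$ and elementarity of $j_{U_\xi}$ delivers unboundedness of non-Galvin cardinals below $\kappa$. The delicate step is really the interplay between uniformity of $U_\xi$, Dodd soundness, and the no-$\kappa$-supercompactness hypothesis; everything else is Mitchell-point bookkeeping and a reflection argument.
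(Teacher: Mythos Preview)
Your proposal is correct and follows essentially the same route as the paper: Kunen's moving lemma plus Ketonen-minimality to obtain Mitchell points $U_\xi$, restriction to an $\omega$-club to force uniformity and irreducibility, Goldberg's supercompactness bound together with the no-$\kappa$-supercompact hypothesis to push $\crit(j_{U_\xi})$ up to $\kappa$, and finally placing the non-Galvinness witness $W$ Mitchell-below $U_\xi$ so that $M_{U_\xi}$ sees $\kappa$ as non-Galvin.

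Two small remarks. First, the proposition is stated under UA alone, without the extra hypothesis that every irreducible ultrafilter is Dodd sound; the fact that a uniform irreducible ultrafilter on $\kappa$ witnesses ${<}\kappa$-supercompactness of its completeness is a pure UA consequence via \cite[Theorem 8.2.23]{GoldbergUA}, so you should drop the appeal to Dodd soundness there. Second, be careful with terminology: the $W$ you want is not a ``non-Galvin ultrafilter'' in the sense of failing the Galvin property, but rather the $\kappa$-complete ultrafilter from which the embeddings of Definition~\ref{Definition: non-Galvin cardinal} are derived (as in the proposition following that definition); your later clarification shows you have the right object in mind, but the initial phrasing is misleading.
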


In particular,
the least cardinal \(\kappa\)
that is \(\kappa\)-compact
is larger than the least non-Galvin cardinal assuming UA.\footnote{It should be provable from UA that any cardinal \(\kappa\) that is \(\kappa\)-compact is a limit of non-Galvin cardinals. Here there are two cases. If \(\kappa\)
is a limit of cardinals \(\gamma\) that are \(\kappa\)-compact, then 
each of these cardinals \(\gamma\) is \(\gamma\)-compact, so \(\kappa\)
is a limit of non-Galvin cardinals. If \(\kappa\)
is not a limit of \(\kappa\)-compact cardinals, one would like to show, as above, that there is a non-Galvin ultrafilter \(W\) on \(\kappa\) that is below some \(\kappa\)-complete ultrafilter on \(\kappa\) in the Mitchell order. The issue is that it is unclear how to show that the Mitchell order on \(\kappa\)-complete ultrafilters has rank \((2^\kappa)^+\) if some \(\nu < \kappa\) is \(\kappa\)-supercompact.}
\section{Open problems}
\begin{question}
    Is it consistent that there is a $\kappa$-complete uniform ultrafilter $U$  over $\kappa$ satisfying the Galvin property that is not an $n$-fold sum of  $\kappa$-complete $p$-points over $\kappa$?
\end{question}
Recently, Gitik gave a positive answer to this question, thus our characterization of ultrafilters with the Galvin property cannot be proved in ZFC. The following question seems more plausible for a positive answer in ZFC:
\begin{question}
    Is every uniform $\kappa$-complete ultrafilter $U$ over $\kappa^+$ non-Galvin, i.e., $\neg \text{Gal}(U,\kappa^+,\kappa^{++})$ holds? 
\end{question}
Under UA, the answer is positive by Corollary \ref{Cor: ultrafilter on successor is Galvin}.  

\begin{question}
    Does a non-Galvin cardinal entail the existence of a non-Galvin ultrafilter which extends the club filter?
\end{question}
By Main Theorem \ref{mthorem: non-Galvin implies existence of ultrafilter}, a non-Galvin cardinal entails the existence of a non-Galvin ultrafilter. Assuming UA and that every irreducible is Dodd sound, Main Theorem \ref{mtheorem: extending the club filter under UA} shows that a non-Galvin cardinal also entails the existence of $\kappa$-complete non-Galvin ultrafilter which extends the club filter. 

\begin{question}
    Does every fine normal ultrafilter over $P_\kappa(\kappa^+)$ satisfy $\text{Gal}(U,\kappa,2^{\kappa^{+}})$?
\end{question}
The answer would be interesting even under $UA$. This is the first step toward answering the more general problem:
\begin{question}
    Characterize the Tukey-top ultrafilters on $\kappa$ with respect to $\lambda<\kappa$ assuming UA plus every irreducible is Dodd sound.
\end{question}
 \begin{question}
    Is there a similar characterization under $UA$ for $\sigma$-complete ultrafilters with the Galvin property over singular cardinals?
\end{question}
We believe that such a characterization exists and that similar methods to those appearing in this paper should be useful. 

In the absence of GCH we have the following questions which are open:
\begin{question}
    If we replace $i''\kappa^+$ by $i''2^\kappa$ in the definition of non-Galvin cardinal, do we get a $\kappa$-complete ultrafilter such that $\neg\text{Gal}(U,\kappa,2^\kappa)$?
\end{question}
More generally:
\begin{question}
    Is it consistent that there is a $\kappa$-complete ultrafilter $U$ such that $\neg \text{Gal}(U,\kappa,\kappa^+)$ but $\text{Gal}(U,\kappa,2^\kappa)$?
\end{question}
The result of this paper resolves these two questions under UA plus every irreducible is Dodd sound.

 The following two questions address the assumptions in the main theorems of this paper.

\begin{question}
    Is it consistent that there is a cardinal $\kappa$ which is $\kappa^+$-supercompact and that every irreducible ultrafilter is Dodd sound?
\end{question}
\begin{question}
    Does UA imply that every irreducible ultrafilter is Rudin-Keisler equivalent to a Dodd sound ultrafilter?
\end{question}

 Let us conclude this paper with a diamond-like principle which is a reasonable candidate to be equivalent to non-Galvin ultrafilters.
Such a principle would be valuable as there is no known formulation of the Galvin property in terms of the ultrapower. This would be also interesting from the point of view of the Tukey order since this order involves functions which typically have  domains of size $2^\kappa$, and thus not available in the ultrapower.  
\begin{definition}
    We say that $\diamondsuit^-_{\text{par}}(U)$ holds if and only if there is $A\in M_U$, $\lambda$ and $\l X_i\r_{i<2^\kappa}\subseteq P(\kappa)$ such that:
    \begin{enumerate}
        \item $\{j_U(X_i)\cap\lambda\mid i<2^\kappa\}\subseteq A$.
        \item there is no function $f:\kappa\rightarrow\kappa$ such that $j_U(f)(|A|^{M_U})\geq\lambda$.
    \end{enumerate}
\end{definition}
The argument of Theorem \ref{Theorem: diamond minus implies non-Galvin} can be adjusted to conclude that $\diamondsuit^-_{\text{par}}(U)$ implies that $U$ is non-Galvin. 
\begin{question}
Is $\diamondsuit^-_{\text{par}}(U)$ equivalent to $U$ being non-Galvin?    
\end{question}
The next question seeks an analogous result on $\omega$ to the one of this paper: 
\begin{question}
    Is it consistent that every ultrafilter on $\omega$ which is not Tukey-top is an $n$-fold sum of $p$-points?
\end{question}
\subsection*{Acknowledgment}
The authors would like to thank the referee of this paper for their clever remarks and contribution to the presentation of the current version of this paper. They would also like to thank Natasha Dobrinen for providing valuable corrections regarding the theory of the Tukey order on ultrafilters over a countable set. Finally, we would like to thank Moti Gitik for insightful discussions and comments. 

\bibliographystyle{amsplain}
\bibliography{ref}
\end{document}